\numberwithin{equation}{section}
\theoremstyle{plain}
\newtheorem{theorem}{Theorem}[section]
\newtheorem{lemma}[theorem]{Lemma}
\newtheorem{corollary}[theorem]{Corollary}
\newtheorem{conjecture}[theorem]{Conjecture}
\newtheorem{assumption}[theorem]{Assumption}
\theoremstyle{definition}
\newtheorem{definition}[theorem]{Definition}
\theoremstyle{remark}
\newtheorem{remark}[theorem]{Remark}
\newcommand{\RR}{\mathbb{R}}
\newcommand{\CC}{\mathbb{C}}
\newcommand{\ZZ}{\mathbb{Z}}
\renewcommand{\>}{\rangle}
\newcommand{\<}{\langle}
\newcommand{\ind}{\operatorname{ind}}
\newcommand{\supp}{\operatorname{supp}}
\newcommand{\End}{\operatorname{End}}
\newcommand{\p}{\partial}
\newcommand{\dirac}{\slashed{\p}}
\newcommand{\B}{\mathcal{B}}
\newcommand{\calF}{\mathscr{F}}
\newcommand{\calE}{\mathscr{E}}
\newcommand{\calH}{\mathscr{H}}
\newcommand{\ID}{\operatorname{Id}}
\renewcommand{\span}{\operatorname{span}}
\newcommand{\n}{\nabla}
\begin{document}

\title[$\ZZ_2$-valued index of elliptic odd symmetric operators]{On the $\ZZ_2$-valued index of elliptic odd symmetric operators on non-compact manifolds}
\author{Maxim Braverman and Ahmad Reza Haj Saeedi Sadegh }
\address{Department of Mathematics,
        Northeastern University,
        Boston, MA 02115,
        USA
         }

\subjclass[2020]{58J20, 19K56, 58J22}
\keywords{index theory, quaternionic bundle, odd symmetric operator, Callias index theorem}

\begin{abstract}
We investigate elliptic operators with a symmetry that forces their index to vanish. We study the secondary index, defined modulo 2. We examine Callias-type operators with this symmetry on non-compact manifolds and establish mod 2 versions of the Gromov-Lawson relative index theorem, the Callias index theorem, and the  Boutet de Monvel's index theorem for Toeplitz operators.

\medskip

\noindent\textsc{R\'esum\'e.}
Nous étudions des opérateurs elliptiques avec une symétrie qui force leur indice à s'annuler.
Nous étudions l'indice secondaire, défini modulo 2. Nous examinons les opérateurs de type Callias avec
cette symétrie sur des variétés non compactes et établissons des versions mod 2 du théorème de l'indice relatif de Gromov-Lawson, du théorème de l'indice de Callias, et du théorème de l'indice de Boutet de Monvel pour les opérateurs de Toeplitz.
\end{abstract}

\maketitle

\section{Introduction}\label{S:introduction}

In many cases, when symmetry causes the index of a Fredholm operator to vanish, a secondary invariant taking values in $\ZZ_2$ can be defined. The first example of this was the $\ZZ_2$-valued index of a skew-adjoint operator on a real Hilbert space, discovered by Atiyah and Singer almost 50 years ago, \cite{AtSinger69}. Since then, many other examples of symmetries have been considered, drawing on applications in physics, cf. \cites{Schulz-Baldes15,DollSB21} and references therein (see also \cites{GrafPorta13,Hayashi17}). 

This paper focuses on the case of \textit{odd symmetric operators}, studied in \cites{Schulz-Baldes15,DeNittisSB15,DollSB21} and relevant to physical systems with time-reversing symmetries. From a mathematical perspective, this translates to the existence of an anti-unitary anti-involution, i.e., an anti-linear map  $\tau$ with $\tau^2=-1$. A Fredholm operator $D$ is called \textit{odd symmetric} if $\tau D \tau^{-1} = D^*$.

The $\ZZ_2$-valued index (or $\tau$-index) of $D$ is defined as
\[\ind_\tau D \ := \ \dim\ker D \qquad \text{modulo} \quad 2.\]
This definition resembles the Atiyah-Singer index for skew-adjoint operators, but here $D$ is not skew-adjoint. The reason for this definition is that the $\tau$-index is invariant under continuous homotopy (\cites{DeNittisSB15,DollSB21}) and shares many other properties with the usual $\ZZ$-valued index. Moreover, it is shown in  \cites{Schulz-Baldes15}, that the space $\mathcal{F}_{\tau}(H)$ of odd symmetric Fredholm operators on an infinite-dimensional separable Hilbert space, $H$, is the classifying space for the $KO^{-2}$ functor. In particular, $\pi_0(\mathcal{F}_{\tau}(H))=KO^{-2}(\textit{pt})\simeq\mathbb{Z}_2$. Therefore operators $A,B\in \mathcal{F}_{\tau}(H)$ belong to the same connected component of $\mathcal{F}_{\tau}(H)$ if and only if they have the same $\tau$-index (see Remark~\ref{R:KO-2} for more details). 

This paper investigates the $\tau$-index of first-order elliptic odd symmetric differential operators on both compact and non-compact manifolds. We obtain analogs of several results for the usual index in the odd symmetric case. The proofs often closely resemble those in the classical case, although some modifications are necessary. Firstly, to ensure that all constructions respect the $\tau$-action, and secondly, the notion of a topological index and the Atiyah-Singer-type index theorem for the $\tau$-index is currently absent.

One of the applications of the $\tau$-index is in computing the bulk index in the theory of topological insulators with time-reversal symmetry, particularly for 2D topological insulators of type AII \cites{GrafPorta13, Hayashi17}. Graf and Porta \cite{GrafPorta13} established a bulk-edge correspondence for these insulators, comparing the $\tau$-valued index with a $\ZZ_2$-valued spectral flow. We address this problem in \cite{BrSaeedi24a} and establish a generalization of the Graf-Porta result to a broader class of operators. This is similar to how the usual $\ZZ$-valued bulk-edge correspondence is generalized in \cite{Br19Toeplitz}.

\medskip
Let us now provide a more detailed description of our main results and constructions.

Let $M$ be a Riemannian manifold and let $\theta:M\to M$ be a metric preserving involution. The pair $(M,\theta)$ is referred to as an involutive manifold. A \textit{quaternionic bundle} over $M$  is a complex vector bundle $E$ together with an anti-unitary anti-involution, which covers $\theta$,  \cites{Dupont69,DeNittisGomi15,Hayashi17}. By this, we mean that for every $x\in M$ we are given a map 
$\theta^E_x:E_x\to E_{\theta x}$, such that $\theta^E_{\theta x}\circ \theta^E_x=-1$. We define an anti-unitary anti-involution $\tau:\Gamma(M,E)\to \Gamma(M,E)$ by 
\[
\tau:\, f(x)\ \mapsto \ \theta^E f(\theta x), \qquad f\in\Gamma(M,E),
\]
and consider an elliptic first-order operator $D$ which is odd symmetric with respect to $\tau$. Suppose that $D$ is Fredholm. This is, for example, the case when $M$ is compact. But we are also interested in the case when $M$ is complete, but $D$ is still Fredholm, e.g. when $D$ is of Callias type, cf. \cites{Anghel93Callias,Anghel93,Bunke95}).

A quaternionic Dirac bundle above in the case of the trivial involution $\theta:M\to M$ gives the setting for the so-called $\textbf{Cl}_2$-Dirac operator (\cites{LawMic89}) which was originally studied by Atiyah and Singer (\cite{AtSinger5}). See Remark \ref{R:cl2dirac}

In Section~\ref{S:odddirac}, we study the basic properties of the $\tau$-index of an odd symmetric elliptic operator on a quaternionic vector bundle. We also compute the $\tau$-index in several geometric examples.  One of those examples corresponds to the bulk index of 2d Topological Insulator of type AII, cf. \cites{GrafPorta13,Hayashi17}.

In Section~\ref{S:relative index}, we consider the case when $M$ is a complete non-compact manifold and the operator is Callias-type (another name for it is a Dirac-Schr\"odinger operator, \cite{BruningMoscovici}). This means that we consider an operator $B= D+F$, where $D$ is a first-order elliptic operator with a uniformly bounded principal symbol, and $F$ is a bundle map, chosen so that $B$ is Fredholm (cf. Section~\ref{SS:Calliasoperators}).  We establish an analog of the Gromov-Lawson relative index theorem,\cite{GromovLawson83}.  

In Section~\ref{S:callias}, we specify to the case when $\dirac$ is an odd symmetric generalized Dirac operator on a complete involutive \textit{odd-dimensional} manifold $M$. Here, we introduce a Callias-type operator $B= \dirac+i\Phi$ and establish a $\mathbb{Z}_2$-valued variant of the Callias index theorem,  \cites{Anghel93Callias,Bunke95}. This theorem expresses the $\tau$-index of $B$ as the $\tau$-index of a particular generalized Dirac operator on a hypersurface. Although our proof is similar to \cite{BrCecchini17}, we needed to adapt several steps to maintain $\tau$-equivariance in our constructions. Moreover, we found several significant simplifications, which can also be applied to simplify the proof in the classical case. 

In Section~\ref{S:cobordism}, we apply the Callias index theorem to show that cobordisms preserve the $\ZZ_2$-valued index. 

In Section~\ref{S:Boutet de Monvel}, we prove a $\ZZ_2$-valued analog of the generalized Boutet de Monvel's index theorem for Toeplitz operators, \cite{BoutetdeMonvel78}.    Boutet de Monvel established his result for a Toeplitz operator on a pseudo-convex domain. Guentner and Higson, \cite{GuentnerHigson96}, reinterpreted it in terms of the index of a Callias-type operator. Bunke, \cite{Bunke00}, generalized their result to Callias-type operators on general manifolds. In this section, we present a $\ZZ_2$-valued analog of this generalized Boutet de Monvel theorem of Bunke. 

\subsection*{Acknowledgements} We would like to thank the referee for the very careful reading of the manuscript and for making valuable remarks and corrections.

\section{The $\ZZ_2$-index of an odd symmetric operator}\label{S:tauindex}

In this section, we consider a (possibly) unbounded Fredholm operator $D$ which is symmetric with respect to a given anti-unitary anti-involution $\tau$. Such operators are called \textit{odd symmetric},  \cites{Schulz-Baldes15,DeNittisSB15,DollSB21}. The index of such an operator vanishes due to its symmetry. Following  \cites{DeNittisSB15,DollSB21}, we defined the {\em secondary invariant} –- the $\ZZ_2$-index of $D$ by $\ind_\tau D:= \dim\ker D$ modulo 2. This is similar to the Atiyah-Singer construction of the index of a skew-adjoint operator (though $D$ is not skew-adjoint). We show that the  $\tau$-index is invariant under homotopy and study other properties of this index. We also compute it in an example. Most of the material in this section is taken from  \cite{Schulz-Baldes15}, but, since we work with unbounded operators, some minor changes are necessary.

\subsection{An anti-unitary anti-involution}\label{SS:involtiontau}
Let $H$ be a complex Hilbert space and let $\tau:H\to H$ be an \textbf{anti-linear} bounded map satisfying $\tau^*\tau=1$.  Here  we denote by $\tau^*$ the unique anti–linear operator satisfying 
\[
	\<\tau x, y\>_H\ = \ \overline{\<x,\tau^*y\>_H} \qquad\text{for all}\quad
	x,y\in H.
\]
We also assume $\tau^*=-\tau$ so that $\tau^2=-1$.  We refer to such $\tau$ as an {\em anti-unitary anti-involution}.

\begin{lemma}\label{L:tau orthogonal}
Suppose $\tau:H\to H$ is an anti-unitary anti-involution. Then $\tau e$ is perpendicular to $e$ for every $e\in H$. Further, if $L\subset H$ is a $\tau$-invariant subspace and $e\in H$ is a vector perpendicular to $L$, then $\tau e$  is also perpendicular to $L$. 
 \end{lemma}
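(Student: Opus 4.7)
The plan is to unwind both claims directly from the two defining relations for $\tau$, namely the adjoint identity $\langle \tau x, y\rangle = \overline{\langle x, \tau^* y\rangle}$ and the normalization $\tau^* = -\tau$ (which together encode $\tau^2 = -1$ up to the given unitarity $\tau^*\tau = 1$). No deeper structure is needed; the statements are essentially formal consequences of skew-symmetry of the sesquilinear form $(x,y)\mapsto \langle \tau x, y\rangle$.

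For the first assertion, I would simply substitute $x=y=e$ into the defining identity and use $\tau^*=-\tau$ to obtain
\[
\langle \tau e, e\rangle \ = \ \overline{\langle e, \tau^* e\rangle} \ = \ -\,\overline{\langle e, \tau e\rangle} \ = \ -\langle \tau e, e\rangle,
\]
where the last equality is the standard conjugate symmetry of the Hilbert inner product. Hence $\langle \tau e, e\rangle = 0$.

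For the second assertion, let $\ell\in L$ be arbitrary. Applying the same identity with $x=e$, $y=\ell$ and using $\tau^*=-\tau$ gives
\[
\langle \tau e, \ell\rangle \ = \ \overline{\langle e, \tau^* \ell\rangle} \ = \ -\,\overline{\langle e, \tau \ell\rangle}.
\]
Since $L$ is $\tau$-invariant, $\tau\ell\in L$, and because $e\perp L$ the right-hand side vanishes. As $\ell\in L$ was arbitrary, $\tau e \perp L$.

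There is no real obstacle here; the only thing to be careful about is bookkeeping with the antilinearity (e.g.\ not forgetting the complex conjugate that appears when moving $\tau$ across the inner product) and the sign coming from $\tau^*=-\tau$. Both points are already built into the definition of $\tau^*$ recalled just before the lemma, so the argument reduces to a two-line computation in each case.
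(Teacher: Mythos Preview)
Your proof is correct and follows essentially the same route as the paper's: both arguments use the adjoint identity $\langle \tau x, y\rangle = \overline{\langle x, \tau^* y\rangle}$ together with $\tau^* = -\tau$ to reduce each claim to a one-line computation. Your write-up of the second assertion is in fact slightly cleaner than the paper's, which contains a small typo in the displayed chain (it writes $-\langle \tau e, x\rangle$ where $-\overline{\langle e, \tau x\rangle}$ is meant); your version makes the use of $\tau$-invariance of $L$ explicit.
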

 \begin{proof}
 Since $\tau^*=-\tau$, we have
 \[
    \<\tau e,e\>_H \ = \ \overline{\<e,\tau^*e\>_H}  \ = \ -\,\<\tau e,e\>_H. 
 \]
 Hence, $\<\tau e, e\>_H=0$.

Suppose $L$ is a $\tau$-invariant subspace and let $e\in H$ be a vector perpendicular to $L$. For any $x\in L$,
 \[
    \<\tau e,x\>_H \ = \ \overline{\<e,\tau^*x\>_H}  \ = \ -\,\<\tau x,e\>_H
    \ = \ 0.
 \]
 \end{proof}

The following lemma is a version of the \textit{Kramers' degeneracy}, cf. \cite{KleinMartin1952} (see also \cite[Lemma~5.1]{DeNittisSB15}).

\begin{lemma}\label{L:even} An anti-unitary anti-involution $\tau$ does not have non-zero fixed vectors, i.e. $\tau x=x$ implies $x=0$. Further, if $\tau$ acts on a finite-dimensional space $H$, then $\dim H$ is even.  Moreover, there exists a subspace $L\in H$ such that $\tau L$ is perpendicular to $L$ and $H$ decomposes into an orthogonal direct sum $H=L\oplus \tau L$.
\end{lemma}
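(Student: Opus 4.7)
The plan is to derive all three conclusions from Lemma~\ref{L:tau orthogonal}, with the orthogonal splitting doing most of the work. The cleanest order is: first the non-existence of fixed vectors (as a warm-up), then the construction of $L$ via a maximality argument, and finally the even-dimensionality as an immediate corollary.

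For the first claim, $\tau x = x$ combined with $\tau x \perp x$ from Lemma~\ref{L:tau orthogonal} forces $\<x,x\>_H=0$, whence $x=0$. For the splitting, I would apply Zorn's lemma to the family $\mathcal{F}$ of orthonormal subsets $\{e_\alpha\}\subset H$ such that $\{e_\alpha\}\cup\{\tau e_\alpha\}$ is again orthonormal, ordered by inclusion. Unions of chains remain in $\mathcal{F}$ (any two elements of such a union lie in a common member of the chain, where the orthonormality is already known), so a maximal element $\{e_\alpha\}_{\alpha\in A}$ exists; set $L:=\overline{\span}\{e_\alpha:\alpha\in A\}$, which satisfies $\tau L\perp L$ by construction. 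The key step is to rule out $M:=L\oplus \tau L\neq H$. Since $M$ is $\tau$-invariant (using $\tau^2=-1$), the second clause of Lemma~\ref{L:tau orthogonal} says that any unit vector $e\in M^\perp$ automatically satisfies $\tau e\perp M$, and the first clause gives $\tau e\perp e$. Hence $\{e_\alpha\}\cup\{e\}$ still belongs to $\mathcal{F}$, contradicting maximality.

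Finally, for the parity statement, if $\dim H<\infty$ then the decomposition just obtained gives $\dim H=\dim L+\dim \tau L=2\dim L$; here $\tau L$ is a complex subspace because $\lambda\,\tau v=\tau(\bar\lambda v)$, and it has the same complex dimension as $L$ since $\tau$ restricts to an anti-linear isometric bijection $L\to \tau L$. I expect the argument to be essentially this clean; the only point requiring any care is confirming that $\mathcal{F}$ is genuinely inductively ordered and that the enlargement step produces a bona fide element of $\mathcal{F}$ (in particular that $e$ and $\tau e$ are linearly independent, which is forced by $\tau e\perp e$), both of which reduce to Lemma~\ref{L:tau orthogonal}.
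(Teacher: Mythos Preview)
Your argument is correct. Both proofs hinge on Lemma~\ref{L:tau orthogonal}, but the packaging differs.

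For the first claim the paper does not invoke orthogonality at all: from $\tau x=x$ it simply applies $\tau$ again to get $-x=\tau^2x=\tau x=x$. Your route via $\tau x\perp x$ is equally short.

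For the decomposition the paper works by finite induction: pick $e_1$, set $e_2=\tau e_1$, then choose $e_3$ orthogonal to $\span\{e_1,e_2\}$, set $e_4=\tau e_3$, and so on, using both parts of Lemma~\ref{L:tau orthogonal} at each step to verify orthogonality. This yields an explicit orthogonal basis $\{e_1,\tau e_1,\ldots,e_n,\tau e_n\}$, from which both the even-dimensionality and $L=\span\{e_1,e_3,\ldots\}$ drop out simultaneously. Your Zorn's-lemma argument is logically the same maximality idea, just phrased abstractly; its payoff is that it proves the splitting $H=L\oplus\tau L$ even when $H$ is infinite-dimensional, whereas the paper's induction is tailored to (and only stated for) the finite-dimensional case. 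On the other hand, for the lemma as written the induction is more elementary and avoids the appeal to choice.
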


\begin{proof}
If $\tau x= x$, then $-x= \tau^2 x= x$, hence, $x=0$.

Suppose now that $\dim H<\infty$. We will construct an orthogonal basis of $H$ by induction. Choose any non-zero vector $e_1\in H$. Let $e_2=\tau e_1$. By Lemma~\ref{L:tau orthogonal}, $e_1\perp e_2$.

Suppose we already found $2k$ linearly independent vectors $\{e_1,\ldots,e_{2k}\}$ in $H$, such that $e_{2j}= \tau e_{2j-1}$ for all $j=1,\ldots,k$. Then $\span\{e_1,\ldots,e_{2k}\}$ is a $\tau$-invariant space. Choose any non-zero vector $e_{2k+1}$ perpendicular to  $\span\{e_1,\ldots,e_{2k}\}$ and set $e_{2k+2}:= \tau e_{2k+1}$. By Lemma~\ref{L:tau orthogonal}, $e_{2k+2}$ is perpendicular to both $e_{2k+1}$ and $\span\{e_1,\ldots,e_{2k}\}$. Hence vectors $\span\{e_1,\ldots,e_{2k+2}\}$ are mutually orthogonal. 

By induction, we obtain an orthogonal basis of $H$ $\{e_1,\ldots,e_{2n}\}$ such that $e_{2j+2}:= \tau e_{2j+1}$, which has an even number of vectors. Hence, $\dim H$ is even. Set $L:= \span\{e_1,e_3,\ldots, e_{2n-1}\}$. Then $\tau L= \span\{e_2,e_4,\ldots, e_{2n}\}$ and $H=L\oplus \tau L$.
\end{proof}

\subsection{Odd symmetric operators}\label{SS:oddsymmetric}
Fix an anti-unitary anti-involution $\tau:H\to H$. 

\begin{definition}\label{D:odd symmetric}
A closed linear operator $D:H\to H$  is called {\em odd symmetric} (cf.  \cites{DeNittisSB15,DollSB21}) if 
\begin{equation}\label{E:tauDtau}
    \tau\, D\, \tau^{-1}\ = \ D^*.
\end{equation}
\end{definition}

Let $W\subset H$ be a dense subspace and let $D:H\to H$ be an unbounded closed linear operator on $H$ whose domain is $W$. Then $W$ is a Hilbert space with the inner product
\[
	\<x,y\>_{W}\ = \ \<x,y\>_H\ + \ \<Dx,Dy\>_{W}
\]
and the operator $D:W\to H$ is a bounded. We denote by $\calF_\tau(W,H)$ the space of Fredholm operators on $H$ with domain $W$, i.e.,  odd symmetric operators with closed range and finite-dimensional kernel and cokernel. The space $\calF_\tau(W,H)$ is endowed with the norm operator topology.

\begin{remark}
In our geometric applications, $W$ will be a version of the first Sobolev space $W^{1,2}$. Hence the notation.   
\end{remark}

\subsection{Graded odd symmetric operators}\label{SS:graded odd symmetric}
Assume now that $H=H^+\oplus H^-$ is a graded Hilbert space and that $D$ is \textit{odd with respect to this grading}, i.e. $D:H^\pm\to H^\mp$. Let $W^\pm:= W\cap H^\pm$. Then $W= W^+\oplus W^-$. We denote $D^\pm$ the restriction of $D$ to $H^\pm$ and write $D= \begin{psmallmatrix}
0&D^-\\D^+&0\end{psmallmatrix}$. We are mostly interested in the case when $D$ is self-adjoint. Then $(D^\pm)^*= D^\mp$.

Assume that the anti-involution $\tau$ is also odd with respect to the grading,  $\tau:H^\pm\to H^\mp$. If $D$ is odd symmetric, then \eqref{E:tauDtau} implies that $\tau D^\pm\tau^{-1}= D^\mp$. In this situation, we say that $D$ is a \textit{graded odd symmetric operator}.

We denote the space of graded Fredholm odd symmetric self-adjoint operators by $\widehat{\calF}_\tau(W,H)$.
If $D\in \widehat{\calF}_\tau(W,H)$, then $\tau D^\pm \tau^{-1}= D^\mp= (D^\pm)^*$. Then we say that \textit{the operators $D^\pm$ are odd symmetric}. Notice, that in this case $\tau:W\to W$.

\subsection{The $\ZZ_2$-valued index}\label{SS:Z2index}
Let $D$ be a Fredholm odd symmetric operator. It follows from \eqref{E:tauDtau} that $\ind D=0$. Following \cite{Schulz-Baldes15}, we define the {\em secondary} $\ZZ_2$-valued invariant – the \textit{$\tau$-index of $D$} by 
\begin{equation}\label{E:tauindex}
	\ind_\tau D\ := \ \dim \ker D \qquad \mod 2.
\end{equation}
This is similar to the Atiyah-Singer index of skew-adjoint Fredholm operators, \cite{AtSinger69}. 

If $D\in \widehat{\calF}_\tau(W,H)$ is a graded Fredholm odd symmetric self-adjoint operator on a graded Hilbert space $H$, we define 
\begin{equation}\label{E:tauindex graded}
	\ind_\tau D^+\ := \ \dim \ker D^+ \qquad \mod 2.
\end{equation}

The reason for these definitions is that the $\tau$-index is stable under homotopy of $D$:

\begin{theorem}\label{T:homotopyindex}
\begin{enumerate}
\item The $\tau$-index $\ind_\tau D$ is constant on the connected components of\/ $\calF_\tau(W,H)$.

\item The $\tau$-index  $\ind_\tau D^+$ is constant on the connected components of $\widehat{\calF}_\tau(W,H)$.
\end{enumerate}
\end{theorem}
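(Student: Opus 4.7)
The plan is to reduce the question of local constancy of $\ind_\tau D_t$ to a finite-dimensional parity argument via spectral projections, in the spirit of Atiyah--Singer's proof for skew-adjoint operators. Fix a continuous path $t\mapsto D_t$ in $\calB_\tau(W,H)$ and any point $t_0$. Since $D_{t_0}$ is Fredholm, $0$ is isolated in the spectrum of $D_{t_0}^*D_{t_0}$; standard Kato--Rellich perturbation theory then yields $\epsilon>0$ and a neighborhood $U$ of $t_0$ such that, for all $t\in U$, the interval $[0,\epsilon]$ is separated from the rest of the spectrum of $D_t^*D_t$ by a gap. I would then introduce the spectral projections
\[
  P_t \ :=\ \chi_{[0,\epsilon]}\bigl(D_t^*D_t\bigr),\qquad Q_t \ :=\ \chi_{[0,\epsilon]}\bigl(D_tD_t^*\bigr),
\]
which are finite-rank, depend continuously on $t\in U$, and have constant rank $n:=\dim\ker D_{t_0}$.

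Next I use the odd symmetric relation to link $P_t$ and $Q_t$. From $\tau D_t\tau^{-1}=D_t^*$ one gets $\tau(D_t^*D_t)\tau^{-1}=D_tD_t^*$, and functional calculus yields $\tau P_t\tau^{-1}=Q_t$; hence $\tau$ restricts to an anti-linear bijection $P_tH\to Q_tH$ and $\tau^2=-1$ on $P_tH\oplus Q_tH$. The intertwining $D_t(D_t^*D_t)=(D_tD_t^*)D_t$ gives $D_tP_t=Q_tD_t$, so $D_t$ maps $P_tH$ into $Q_tH$, yielding a finite-dimensional linear map
\[
  d_t\ :=\ D_t|_{P_tH}\ \colon\ P_tH\ \longrightarrow\ Q_tH
\]
with $\ker d_t=\ker D_t$ and $\tau d_t\tau^{-1}=d_t^*$.

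The core of the proof is the following finite-dimensional lemma: for any map $d\colon V_1\to V_2$ of this form, with $\dim V_1=\dim V_2=n$, anti-linear $\tau\colon V_1\to V_2$ satisfying $\tau^2=-1$ on $V_1\oplus V_2$, and $\tau d\tau^{-1}=d^*$, one has $\dim\ker d\equiv n\pmod{2}$. To prove it, pick an orthonormal basis $\{e_i\}_{i=1}^n$ of $V_1$ and set $f_i:=\tau e_i$. The identity $\langle\tau x,\tau y\rangle=\overline{\langle x,y\rangle}$ (a direct consequence of $\tau^*=-\tau$) shows that $\{f_i\}$ is orthonormal in $V_2$ and hence a basis. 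Writing $d$ as a matrix $A$ in these bases and translating $\tau d\tau^{-1}=d^*$ into matrix form (using $\tau f_i=-e_i$) yields $A^T=-A$, so $A$ is complex skew-symmetric. Since every complex skew-symmetric matrix has even rank, $\dim\ker A\equiv n\pmod{2}$.

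Applying the lemma to $d_t$ for all $t\in U$ gives $\dim\ker D_t\equiv n\pmod 2$, so $\ind_\tau D_t$ is locally constant; a covering argument along the path proves part~(i). For part~(ii), the same strategy works for $D_t^+\colon H^+\to H^-$, using the spectral projections of $D_t^-D_t^+$ on $H^+$ and of $D_t^+D_t^-$ on $H^-$, which $\tau$ interchanges. The main obstacle I foresee is the careful bookkeeping of sign conventions leading to the skew-symmetric normal form, since these depend delicately on the anti-linearity of $\tau$ and the rule $\tau^*=-\tau$; once this normal form is correctly set up, the argument reduces to standard perturbation theory together with the elementary parity fact that complex skew-symmetric matrices have even rank.
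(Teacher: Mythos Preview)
Your argument is correct and shares the paper's outline---spectral projection to a finite-dimensional piece of constant rank, then a parity check---but the finite-dimensional step is packaged differently. The paper decomposes the low-energy subspace into individual eigenspaces $E_{s,\lambda}$ of $D_s^-D_s^+$ for $0<\lambda<\delta$ and shows each one is even-dimensional by verifying that $\frac{1}{\sqrt{\lambda}}\,\tau D_s^+$ is an anti-linear anti-involution on $E_{s,\lambda}$, then invoking Kramers' degeneracy (Lemma~\ref{L:even}). You instead keep the whole low-energy subspace, write the restricted operator $d_t$ as a matrix in the bases $\{e_i\}$ and $\{\tau e_i\}$, and read off from the odd-symmetry relation that this matrix is complex skew-symmetric, hence of even rank. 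Your route avoids the eigenspace decomposition and trades Kramers' degeneracy for the elementary even-rank fact about skew-symmetric matrices; the paper's route stays closer to the quaternionic intuition and reuses the lemma already established. The two mechanisms are equivalent under the hood---block-diagonalizing your $A$ reproduces exactly the paper's pairing on each eigenspace. One small notational point: in the ungraded case the phrase ``$\tau^2=-1$ on $P_tH\oplus Q_tH$'' is imprecise since these two subspaces of $H$ may overlap, but your argument only uses that $\tau$ bijects each onto the other, so nothing is lost.
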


\begin{proof}
We prove the graded case. The proof in the ungraded case is practically the same. 

Let $D_s$ ($s\in[0,1]$) be a continuous path in $\widehat{\calF}_\tau(W,H)$. It suffices to show that for every $s_0\in[0,1]$ there exists $\epsilon>0$ such that $\ind_\tau D_s^+$ is independent of $s$ modulo 2 for $s\in (s_0-\epsilon,s_0+\epsilon)$.

We have  
\[
    \ind_\tau D_s^+\ =\ \dim \ker D_s^+\ = \ \dim\ker D_s^-D_s^+.
\]
For $s_0\in[0,1]$, there exists $\epsilon, \delta>0$ such that $\delta$ is not in the spectrum of $D_s^-D_s^+$ for all $s\in (s_0-\epsilon,s_0+\epsilon)$ and the intersection of the spectrum of $D_{s_0}^-D_{s_0}^+$ with the interval $[0,\delta]$ is equal to $\{0\}$. 

For $\lambda\in \RR$, let  $E_{s,\lambda}\subset H^+$ denote the eigenspace of $D_s^-D_s^+$ with eigenvalue $\lambda$. Our assumptions on $\epsilon$ and $\delta$ imply that, for all $s\in (s_0-\epsilon,s_0+\epsilon)$
\[
    \sum_{0\le\lambda<\delta} \dim E_{s,\lambda} \ = \  \dim \ker D_{s_0}^-D_{s_0}^+.
\]
To prove the homotopy invariance of $\ind_\tau D_s^+$ it now suffices to show that the dimension of $E_{s,\lambda}$ is even for all $0<\lambda<\delta$.

Using \eqref{E:tauDtau} and  $\tau^{-1}=-\tau$, we conclude that 
\[
    D_s^-D_s^+\,\tau D_s^+\ = \ \tau D_s^+\,  D_s^-D_s^+.
\]
It follows that $\tau D_s^+(E_{s,\lambda})\subset E_{s,\lambda}$.  For $\lambda>0$  consider the operator 
\[
    \calE_{s,\lambda}:= \frac1{\sqrt{\lambda}}\tau D_s^+:\, E_{s,\lambda}\ \to E_{s,\lambda}.
\]
Then 
\[
    \calE_{s,\lambda}^2 \ = \  \frac1\lambda\, \tau D_s^+ \tau D_s^+
    \ = \ - \frac1\lambda\tau D_s^+ \tau^{-1} D_s^+
    \ = \ - \frac1\lambda D_s^- D_s^+ \ = \ -\ID:\, E_{s,\lambda}\ \to E_{s,\lambda}.
\]
Thus $\calE_{s,\lambda}:\, E_{s,\lambda}\ \to E_{s,\lambda}$ is an anti-unitary anti-involution. From Kramers' degeneracy Lemma~\ref{L:even} we conclude now that the dimension of $E_{s,\lambda}$ is even for all $\lambda\in(0,\delta)$. 
\end{proof} 

\begin{corollary}\label{C:compact perturbation}
Let $D\in\calF_\tau(W,H)$ and let $K:W\to H$ be a compact odd symmetric operator. Then the operator $D+K$ and $\ind_\tau (D+K)= \ind_\tau D$.
\end{corollary}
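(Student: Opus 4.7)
The plan is to reduce the claim to the homotopy invariance statement of Theorem~\ref{T:homotopyindex} by connecting $D$ to $D+K$ through a linear path inside $\calB_\tau(W,H)$.

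First I would verify that $D+K$ really does lie in $\calB_\tau(W,H)$. Fredholmness is automatic: $D:W\to H$ is bounded Fredholm and $K:W\to H$ is compact, so $D+K$ is Fredholm by the standard stability of the Fredholm property under compact perturbations. Odd symmetry is also preserved under addition, since the defining relation $\tau A\tau^{-1}=A^*$ is $\RR$-linear in $A$ (note that both sides are conjugate-linear in $A$ in the same way, and taking adjoints is additive). Explicitly, $\tau(D+K)\tau^{-1}=\tau D\tau^{-1}+\tau K\tau^{-1}=D^*+K^*=(D+K)^*$. Thus $D+K\in\calB_\tau(W,H)$.

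Next, consider the affine path
\[
    D_s\ :=\ D\,+\,sK, \qquad s\in[0,1].
\]
For each fixed $s$, the same reasoning as above shows $D_s\in\calB_\tau(W,H)$: compact perturbations of Fredholm operators are Fredholm, and odd symmetry is linear. The map $s\mapsto D_s$ is continuous in the norm topology of bounded operators $W\to H$, since $\|D_s-D_{s'}\|=|s-s'|\,\|K\|$ and $K$ is in particular bounded. Hence the path lies in a single connected component of $\calB_\tau(W,H)$.

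Applying part (i) of Theorem~\ref{T:homotopyindex} to this path then gives $\ind_\tau D=\ind_\tau D_0=\ind_\tau D_1=\ind_\tau(D+K)$, which is the desired equality. There is no real obstacle here; the only small point worth checking carefully is that odd symmetry is stable under linear combinations and adjoints, which reduces to the purely formal computation above. If one wants the graded version of the corollary, one would run the identical argument inside $\widehat{\calB}_\tau(W,H)$ using part (ii) of Theorem~\ref{T:homotopyindex}.
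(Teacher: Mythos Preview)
Your argument is correct and is exactly the intended one: the paper states this as an immediate corollary of Theorem~\ref{T:homotopyindex} without spelling out a proof, and the linear path $D_s=D+sK$ is precisely the homotopy one is meant to use. There is nothing to add.
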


\begin{remark}\label{R:KO-2}
    In \cites{Schulz-Baldes15}, it is shown that the space $\mathcal{F}_{\tau}(H)$ of odd symmetric Fredholm operators on an infinite-dimensional separable Hilbert space, $H$, is the classifying space for the $KO^{-2}$ functor; that is for a compact Hausdorff space $X$, there is a bijection
    \begin{equation}\label{eq:oddsymmko-2}
    KO^{-2}(X)\simeq [X,\mathcal{F}_{\tau}(H)].    
    \end{equation}
    In particular, $\pi_0(\mathcal{F}_{\tau}(H))=KO^{-2}(\textit{pt})\simeq\mathbb{Z}_2$. Therefore, by Theorem~\ref{T:homotopyindex}, the $\tau$-index defines an isomorphism $\ind_\tau:KO^{-2}(\textit{pt})\overset{\sim}{\longrightarrow}\mathbb{Z}_2$. In particular, operators $A,B\in \mathcal{F}_{\tau}(H)$ belong to the same connected component of $\mathcal{F}_{\tau}(H)$ if and only if they have the same $\tau$-index. Below, we briefly recall how the correspondence \eqref{eq:oddsymmko-2} is achieved in \cites{Schulz-Baldes15}.

    Let $J:H\to H$ denote the multiplication by the scalar $\sqrt{-1}$ and by $\langle\cdot,\cdot\rangle:H\times H\to \mathbb{C}$ its inner product. Using Lemma \ref{L:even}, one can show that there is a `complex conjugation' operator $C:H\to H$ and a `real' unitary operator $I:H\to H$ such that $I^2=-1$ and $\tau=CI$ (by real we mean $CIC=I$). One can see that the correspondence $T\mapsto IT$ gives a bijection from $\mathcal{F}_{\tau}$ to the space of antisymmetric Fredholm operators $\mathcal{F}_{as}(H)=\{T\in \mathcal{F}(H):T^t=-T\}$, where $T^t:=CT^*C$ is the transpose of $T$. Now the correspondence $B\mapsto BC$ gives a bijection from $\mathcal{F}_{as}(H)$ to $\mathcal{F}^2(H_{\mathbb{R}})$ where $H_{\mathbb{R}}$ is the real Hilbert given by the inner product $(\cdot,\cdot):=\textit{Re}\langle\cdot,\cdot\rangle$ on $H$, and $\mathcal{F}^2(H_{\mathbb{R}})$ is the space of real-linear skew-adjoint operators $A:H_{\mathbb{R}}\to H_{\mathbb{R}}$ with the property $AJ=-JA$, cf. \cites{AtSinger69}. Now the space $\mathcal{F}^2({H_{\mathbb{R}}})$ is a Classifying space for the $KO^{-2}$ functor as shown in \cites{AtSinger69}.
\end{remark}
\subsection{Example: A first-order differential operator on a line}\label{SS:DAline}
Let
\[
	\theta_\CC: \CC^2\ \to \ \CC^2, \qquad \theta_\CC:\,(z_1,z_2)\ \mapsto \ (\bar{z}_2,-\bar{z}_1).
\]
Clearly, $\theta$ is anti-linear, and $\theta_\CC= - \theta_\CC^* = -\theta_\CC^{-1}$. 

Let $H:=L^2(\RR,\CC^2)$ be the space of square-integrable functions with values in $\CC^2$ and let $W:= W^{1,2}(\RR,H)$, where $W^{1,2}$ denotes the Sobolev space of square-integrable functions, whose first derivative is also square-integrable. We write functions in $H$ as $f(t)= \big(f_1(t),f_2(t)\big)$
We define an anti-unitary anti-involution $\tau_\CC:H\to H$ by 
\begin{equation}\label{E:tauC2}
    \tau:\, f(t) \ \mapsto \ \theta_\CC f(-t).
\end{equation}

Consider the operator $D:W\to H$ defined by the formula
\begin{equation}\label{E:DC2}
    D\ : = \ \frac{d}{dt} \ + \ \begin{pmatrix}
		\arctan(t)&0\\0&-\arctan(t)
	\end{pmatrix}.
\end{equation}
One readily sees that $D$ is Fredholm and $\tau$-symmetric.
\begin{lemma}\label{L:indexnormalization}
$\ind_\tau D= 1.$
\end{lemma}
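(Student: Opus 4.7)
My plan is to exploit the fact that $D$ acts diagonally on $\CC^2$: it splits as a direct sum of two uncoupled scalar first-order operators
\[
D_+ \ := \ \tfrac{d}{dt} + \arctan(t), \qquad D_- \ := \ \tfrac{d}{dt} - \arctan(t),
\]
each considered as an unbounded operator on $L^2(\RR)$ with domain $W^{1,2}(\RR)$. Consequently $\ker D = \ker D_+ \oplus \ker D_-$, and computing $\ind_\tau D = \dim\ker D \pmod 2$ reduces to finding the dimensions of these two scalar kernels.

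Each equation $D_\pm u = 0$ is an elementary first-order linear ODE, which I would solve explicitly. Up to a constant,
\[
\ker D_+ \ \ni \ u(t) \ = \ \exp\!\Bigl(-\!\int_0^t \arctan(s)\,ds\Bigr) \ = \ \sqrt{1+t^2}\ e^{-t\arctan(t)},
\]
and $\ker D_-$ is spanned by the reciprocal $v(t) = e^{t\arctan(t)}/\sqrt{1+t^2}$. The decisive step is to read off the $L^2$ behaviour: since $\arctan(t) \to \pm\pi/2$ as $t \to \pm\infty$, the exponent $t\arctan(t)$ tends to $+\infty$ at both ends at linear rate $\tfrac{\pi}{2}|t|$. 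Therefore $u$ decays exponentially at $\pm\infty$ and lies in $W^{1,2}(\RR)$, while $v$ grows exponentially and fails to be square-integrable. This yields $\dim\ker D_+ = 1$ and $\dim\ker D_- = 0$, hence $\dim\ker D = 1$ and $\ind_\tau D = 1$.

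I do not anticipate a serious obstacle, since the argument is essentially an explicit ODE computation. The only points that require a sentence of care are (a) verifying that $u \in W^{1,2}$, which is immediate from the exponential decay of $u$ and $u'=-\arctan(t)u$, and (b) checking that $D$ is in fact odd symmetric with respect to the $\tau$ of \eqref{E:tauC2}, so that the $\tau$-index is well defined; this follows from a short direct calculation using $\theta_\CC^2 = -1$ together with the observation that the potential in \eqref{E:DC2} is odd in $t$ and swaps sign under conjugation by $\theta_\CC$, which matches the sign flip produced by $d/dt$ under $t\mapsto -t$. An alternative, index-theoretic route would be to note that $D_- = -D_+^*$, so that $\dim\ker D_- = \dim\ker D_+^*$ and the count reduces to $\ind D_+$ for a one-dimensional Schrödinger-type operator whose potential has limits of opposite sign at $\pm\infty$; but the direct ODE approach above is the shortest path to the claim.
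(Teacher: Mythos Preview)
Your proposal is correct and is essentially the paper's own argument: split $D$ diagonally into the two scalar operators $\tfrac{d}{dt}\pm\arctan(t)$, solve the ODEs explicitly, and observe that only the ``$+$'' solution $e^{-\int_0^t\arctan(s)\,ds}$ is square-integrable, giving $\dim\ker D=1$. Your extra details (the closed form $\sqrt{1+t^2}\,e^{-t\arctan t}$, the $W^{1,2}$ check, and the alternative $D_-=-D_+^*$ remark) go slightly beyond what the paper writes but do not change the approach.
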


\begin{proof}
If $f (t)= \big(f _1(t),f _2(t)\big)$ is in the kernel of $D$, then 
\[
	\dot{f}_1(t)\ = \ -\arctan(t)\, f_1(t),\qquad
	\dot{f}_2(t)\ = \ \arctan(t) \,f_2(t).
\]
Hence, 
\[
	f_1(t) = \ C\,e^{-\int_0^t\arctan(s)ds},\qquad
	f_2(t) = \ C\,e^{\int_0^t\arctan(s)ds}.
\]
Clearly, $f_1(t)$ is square-integrable, while $f_2(t)$ is not square-integrable unless $C=0$. Thus, the kernel of $D$ consists of vectors $C\big(e^{-\int_0^t\arctan(s)ds},0\big)$ and, hence, has dimension one. 
\end{proof}

\section{The $\ZZ_2$-valued index of an odd symmetric elliptic operator} \label{S:odddirac}

In this section, we study the $\ZZ_2$-valued  index of odd symmetric elliptic operators on a manifold with involution and we compute it in several geometric examples. 

\subsection{Quaternionic vector bundles}\label{SS:quaternionic}
Let $M$ be a Riemannian manifold of dimension $n$.   Usually, we will assume that $M$ is either closed or complete.  Throughout the paper, we identify the tangent and the cotangent bundles to $M$ using the Riemannian metric and write $TM$ for both bundles. 

Let $\theta:M\to M$ be a metric preserving involution, $\theta^2=1$.  The pair $(M,\theta)$ is called an {\em involutive Riemannian manifold}.

\begin{definition}\label{D:quaternionic bundle}
A {\em quaternionic vector bundle} over an involutive manifold $M$ is a hermitian vector bundle $E$ endowed with an anti-linear anti-unitary bundle map $\theta^E:E\to \theta^*E$, such that $(\theta^E)^2=-1$. This means that for every $x\in M$ there exists an anti-linear map $\theta^E_x:E_x\to E_{\theta(x)}$, which depends smoothly on $x$ and satisfies $\theta^E_{\theta(x)}\, \theta^E_x=-1$.

If $E=E^+\oplus E^-$ is a graded vector bundle, we say that $\theta^E$ is {\em odd (with respect to the grading)} if $\theta^E(E^\pm)=E^\mp$. In this situation, we say that $(E,\theta^E)$ is a {\em graded quaternionic bundle}. 
\end{definition}

Given a quaternionic vector bundle $(E,\theta^E)$ over an involutive manifold  $(M,\theta)$ we define an anti-unitary anti-involution $\tau$ on the space $\Gamma(M,E)$ of sections of $E$ by 
\begin{equation}\label{E:tau=}
    \tau:\, f(x)\ \mapsto \ \theta^E f(\theta x), \qquad f\in\Gamma(M,E).  
\end{equation}

\begin{remark}\label{R:cl2dirac}
    When $M$ is of dimension $8k+2$ compact spin manifold with the involution $\theta:M\to M$ the identity map, the complex spinor bundle is a quaternionic bundle. In this setting, the usual index of the Dirac operator vanishes as it is odd symmetric, and its associated $\tau$-index equals the complex dimension of the space of the harmonic spinors (see \cites{AtSinger5} and \cites{LawMic89}).

\end{remark}

\subsection{Odd symmetric elliptic operators}\label{SS:oddelliptic}
Let $(E=E^+\oplus E^-,\theta^E)$ be a graded quaternionic vector bundle over an involutive manifold $(M,\theta)$. Let $D:\Gamma(M,E)\to \Gamma(M,E)$ be a self-adjoint odd symmetric elliptic differential operator on $M$, which is odd with respect to the grading, i.e. $D:\Gamma(M,E^\pm)\to \Gamma(M,E^\mp)$. We set $D^\pm:= D|_{\Gamma(M,E^\pm)}$. Then 
\[
    D\ = \ \begin{pmatrix}
        0&D^-\\D^+&0
    \end{pmatrix}
\]
with respect to the decomposition $E=E^+\oplus E^-$. Our assumptions imply that  $\tau\, D^\pm\, \tau^{-1}= (D^\pm)^*= D^\mp$. 

Suppose now that $D$ is Fredholm. In particular, this is always the case when $M$ is compact.  Then the $\tau$-index $\ind_\tau D^+\in \ZZ_2$ is defined by \eqref{E:tauindex}.
The goal of this paper is to study the properties of this index. 

\begin{definition}\label{D:Sobolev space}
We denote by $W^{1,2}(M,E)$ the domain of the closure of $D$ viewed as a Hilbert space (this is the space which we denoted by $W$ in Section~\ref{S:tauindex}). We refer to $W^{1,2}(M,E)$ as the \textit{Sobolev space}.
\end{definition}
The operator $D$ is a bounded operator $D:W^{1,2}(M,E)\to L^2(M,E)$. 

\begin{lemma}\label{L:compactperturbation}
Let $D$ be a Fredholm odd symmetric elliptic differential operator on a graded quaternionic bundle $E$. Suppose $V:E\to E$ is an odd symmetric bundle map which is odd with respect to the grading, $V:E^\pm\to E^\mp$. Assume that the support of $V$ is compact in $M$. Then $D+V$ is Fredholm and
\[
    \ind_\tau (D^++V^+)\ = \ \ind_\tau D^+, 
\]
where $V^+$ denotes the restriction of $V$ to $E^+$.
\end{lemma}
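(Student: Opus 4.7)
The plan is to realize $D+V$ as a compact perturbation of $D$ in the operator-theoretic sense of Section~\ref{S:tauindex}, and then invoke the homotopy invariance of the $\tau$-index. Concretely, I would view $V$ as a bounded operator $V:W^{1,2}(M,E)\to L^2(M,E)$ (this boundedness is clear since $V$ is a smooth bundle map with compact support, hence pointwise bounded, and the inclusion $W^{1,2}\hookrightarrow L^2$ is continuous). The first substantive step is to verify that $V$ is actually \emph{compact} as an operator $W^{1,2}(M,E)\to L^2(M,E)$. This is where the compact support hypothesis enters: if $K\subset M$ is a compact set containing $\supp V$, then $V$ factors as
\[
    W^{1,2}(M,E)\ \xrightarrow{\ \text{restr.}\ } \ W^{1,2}(K,E|_K)\ \xrightarrow{\ \iota\ } \ L^2(K,E|_K)\ \xrightarrow{\ V\ } \ L^2(M,E),
\]
where the middle arrow is the Rellich--Kondrachov embedding, which is compact for compact $K$. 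Since $V$ and the restriction are bounded, the composition is compact.

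Next, I would check that $V$ (viewed as a bounded operator $W^{1,2}\to L^2$) lies in the appropriate odd symmetric class. By the algebraic assumption that $V$ is an odd symmetric bundle map which is odd with respect to the grading, we have $\tau V\tau^{-1}=V^*$ and $V:\Gamma(M,E^\pm)\to \Gamma(M,E^\mp)$ pointwise. Consequently, for any $s\in[0,1]$ the operator $D_s:=D+sV$ is odd symmetric, odd with respect to the grading, and, because $sV$ is a bounded self-adjoint (in the graded setting) compact perturbation of the self-adjoint Fredholm operator $D$, each $D_s$ is again a self-adjoint Fredholm operator. Thus $D_s\in \widehat{\calB}_\tau(W^{1,2}(M,E),L^2(M,E))$, and the map $s\mapsto D_s$ is continuous in the norm topology of bounded operators $W^{1,2}\to L^2$.

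Having set this up, the conclusion follows immediately from Theorem~\ref{T:homotopyindex}(ii): the $\tau$-index $\ind_\tau D_s^+$ is locally constant in $s\in[0,1]$, hence constant, so
\[
    \ind_\tau(D^++V^+)\ =\ \ind_\tau D_1^+\ =\ \ind_\tau D_0^+\ =\ \ind_\tau D^+.
\]
Alternatively, one could invoke Corollary~\ref{C:compact perturbation} directly once $V$ is identified as a compact odd symmetric operator from $W^{1,2}$ to $L^2$. The main obstacle, and really the only non-formal point, is the compactness of $V$: it is not compact as an operator on $L^2(M,E)$ alone, but it is compact as an operator $W^{1,2}(M,E)\to L^2(M,E)$, and this is precisely what the compact support of $V$ buys us through Rellich's theorem on a compact neighborhood of $\supp V$.
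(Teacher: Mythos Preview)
Your proposal is correct and follows essentially the same route as the paper: the paper also observes that $V:W^{1,2}(M,E)\to L^2(M,E)$ is compact by Rellich's Lemma and then applies Corollary~\ref{C:compact perturbation}. Your version simply spells out the Rellich step in more detail (factoring through $W^{1,2}(K,E|_K)\hookrightarrow L^2(K,E|_K)$) and makes the homotopy $D_s=D+sV$ explicit rather than citing the corollary, but the substance is identical.
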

\begin{proof}
The Fredholmness follows from \cite[Theorem~2.1(vi)]{Anghel93}.  The operator $V:W^{1,2}(M,E)\to L^2(M,E)$ is a compact by Rellich’s Lemma. The lemma follows now from Corollary~\ref{C:compact perturbation}.
\end{proof}

Next, we consider several examples of Dirac-type operators with non-trivial $\tau$-index. 

\subsection{Example 1: An operator on a real line}\label{SS:exampleline}
This example is a reformulation of the example in Section~\ref{SS:DAline}. Let $M=\RR$ and let $\theta(t):= -t$. Then $(M,\theta)$ is an involutive manifold of dimension 1. We identify the tangent space to  $M$ at any point $t$ with $\RR$. Let $E:= M\times \CC^2$ and consider the operator $D^+:= i\frac{\p}{\p t}$.

Set 
\[
    V^+ \ := \ \begin{pmatrix}
        i\arctan(t)&0\\ 0&-i\arctan(t)
    \end{pmatrix}.
\]

Define the anti-involution $\theta^E$ by $\theta^E(z_1,z_2)= (\bar{z}_2,-\bar{z}_1)$. Then, 
\[
    \tau:\, \big(\sigma_1(t),\sigma_2(t)\big)\ \mapsto \
    \big(\bar{\sigma}_2(-t),\bar{\sigma}_1(-t)\big).
\]
One readily checks that the operator $D_V^+= D+V^+$ is $\tau$-symmetric. 

Then $D_V^+=iD$ where $D$ is given by \eqref{E:DC2}. Hence, by Lemma~\ref{L:indexnormalization}, we have
\[
    \ind_\tau D_V^+ \ \equiv \ \dim\ker D_V^+\ = \ 1 
\]
where $\equiv$  denotes equality modulo 2.
\subsection{Example 2: A torus with the trivial involution} \label{SS:torustricvail}
Let $M=S^1\times S^1$ be a two-dimensional torus with the product metric. We write a point $x\in M$ as $x=(e^{it},e^{is})$, where $t,s\in [-\pi,\pi]$ and let $\theta:M\to M$ be the identity map. Let $E= M\times\CC^2$ be the trivial bundle over $M$, endowed 
with the trivial metric and connection.  We identify sections of $E$ with two-component functions $(f_1,f_2):M\to \CC^2$.

Consider the operator 
\[
    D\ := \ 
    \begin{pmatrix}
        0& -\frac{\p}{\p t}+i\frac{\p}{\p s}\\
        \frac{\p}{\p t}+i\frac{\p}{\p s}&0
    \end{pmatrix}.
\]
Then $D^+= \frac{\p}{\p t}+i\frac{\p}{\p s}$.

Define an anti-unitary anti-involution $\theta^E:M\times\CC^2\to M\times \CC^2$ by $\theta^E:(x;z_1,z_2)= (x;\bar{z}_2,-\bar{z}_1)$, where $x\in M,\ z_1,z_2\in \CC^2$. Then $\tau\big(f_1(x),f_2(x)\big)= \big(\bar{f}_2(x),-\bar{f}_1(x)\big)$. One readily sees that $\tau$ commutes with $D$  and, hence,  $D$ is odd symmetric. Then the kernel of $D^+$ consists of functions $f:M\to \CC$ such that 
\[
    \frac{\p f}{\p t}\ + \ i\frac{\p f}{\p s} \ = \ 0.
\]
It follows that $f$ is harmonic on $M=S^1\times S^1$: 
\[
    \frac{\p^2 f}{\p t^2}\ + \ \frac{\p^2 f}{\p s^2}
    \ = \ 0.
\]
Hence, $f$ is a constant, i.e. the kernel of $D^+$ is one-dimensional and 
\[
    \ind_\tau D^+\ \equiv \ \dim\ker D^+ \ = \ 1\ \in \ZZ_2.
\]
\subsection{Example 3: A a torus with time-reversal involution}\label{SS:torustimereversal}
Our next example is a reformulation of the bulk index of 2d Topological Insulator of type AII, cf. \cites{GrafPorta13,Hayashi17}.

Let $M= \CC/\{m+in:m,n\in\ZZ\}$ be a complex torus. Topologically it is $S^1\times S^1$. We write a point in $M$ as $z=s+it$, $(-\pi\le t,s\le \pi)$ and define a holomorphic involution $\theta:M\to M$, by $\theta:s+it\mapsto -s-it$.

Let $L_n$ be a complex line bundle over $M$ with Chern class $n\in \ZZ\simeq H^2(M,\ZZ)$. It can be described as the quotient of the  trivial bundle $[-\pi,\pi]\times[-\pi,\pi]\times\CC$ over $[-\pi,\pi]\times[-\pi,\pi]$ by the equivalence relations
\[
    (-\pi,s,v) \ \sim \ (\pi,s,v), \qquad 
    (t,-\pi,v)\ \sim \ (t,\pi,ve^{int}).
\]
To see that the Chern class of $L_n$ is indeed $n$, let us consider the connection $\n$ on $L_n$, which at the point $s+it\in M$ has the form
\[
    \n_{\frac{\p}{\p s}} \ = \ \frac{\p}{\p s}, \qquad
    \n_{\frac{\p}{\p t}} \ = \ 
    \frac{\p}{\p t}-\frac{in}{2\pi}s.
\]
Notice that at $s=\pm\pi$ we have $\n_{\frac{\p}{\p t}}= \frac{\p}{\p t}\mp\frac{in}{2}$. Hence, 
\[\n_{\frac{\p}{\p t}}\big|_{s=-\pi} \ = \ 
    \frac{\p}{\p t}+\frac{in}{2} \ = \ 
    e^{-int}\,\n_{\frac{\p}{\p t}}\big|_{s=\pi}\,e^{int},
\]
which shows that $\n$ is a smooth connection on $L_n$. The curvature of $\n$ is the form $\omega= -\frac{in}{2\pi}dt\wedge ds$.  Thus the representative of the Chern class of $L_n$ in Chern-Weil theory is $\frac{i}{2\pi}\omega= \frac{n}{(2\pi)^2}dt\wedge ds$. Since the integral of this form over $M$  is $n$, we conclude that $L_n$ is indeed a bundle with Chern class $n$. 

The curvature of the pull-back bundle $\theta^*L_n$ is also equal to $\omega= \theta^*\omega$.  So $\theta^*L_n$ is also a line bundle with Chern class $n$ and, hence, is isomorphic to $L_n$.  

We view the bundle $L_{-n}$ as the complex conjugate of $L_n$ and define a bundle map $\alpha_n: L_n\to \theta^*L_{n}$ by 
\[
    \alpha_n:\, (t,s,v)\ \mapsto \ (-t,-s,\bar{v}), \qquad -\pi\le t,s\le \pi.
\]
One readily sees that this formula defines a smooth map over $M$ (i.e., satisfies the correct periodic conditions on the boundary).  It is an anti-linear map and  $\alpha_n\circ\alpha_{-n}= \ID$.

Let $\Lambda^{0,j}M$ ($j=0,1$) denote the bundle of anti-holomorphic exterior forms on $M$. The space of sections $\Gamma(M,\Lambda^{0,j}M\otimes L_n)$ is the space $\Omega^{0,j}(M,L_n)$ of anti-holomorphic differential forms with values in $L_n$. Set
\[
      E^+\ := \ \big(\Lambda^{0,0}M\otimes L_n\big)\oplus 
    \big(\Lambda^{0,0}M\otimes L_{-n}\big);\qquad
    E^-\ := \ \big(\Lambda^{0,1}M\otimes L_{-n}\big)\oplus 
    \big(\Lambda^{0,1}M\otimes  L_n\big).
\]
Thus the space of sections of $E=E^+\oplus E^-$ is equal to $\Omega^{0,*}(M,L_n\oplus L_{-n})$.

Let $c:TM\to \End \Lambda^{0,*}M$ be the standard Clifford action. It is defined as follows (see \cite[\S2]{BrBackground} for more details): for a tangent vector $v\in TM$ we write it as $v= v^{1,0}\oplus v^{0,1}$ according to the decomposition $T_\CC M= T^{1,0}M\oplus T^{0,1}M$ of the tangent bundle to the holomorphic and anti-holomorphic part. Using the Riemannian metric we transform $v^{1,0}$ to a cotangent vector $Iv^{1,0}$. Then 
\[
    c(v):\,\alpha \ \mapsto \ \sqrt{2}\,\big(\,
    Iv^{1,0}\wedge \alpha\ - \ \iota_{v^{0,1}}\alpha\,\big).
\]
Then $c(v)^2=-|v|^2$.  We also denote by $c(v)$ the induced operators on $\Lambda^{0,*}M\otimes\big(L_n\oplus L_{-n}\big)$ and $\Omega^{0,*}(M,L_n\oplus L_{-n})$.

Since $\p/\p s= \frac{\p/\p z+\p/\p\bar{z}}2$ and $\p/\p t= \frac{\p/\p z-\p/\p\bar{z}}{2i}$, we have 
\begin{equation}\label{E:cds}
\begin{aligned}
           c(\p/\p s):\, \alpha\ &\mapsto  \ \frac1{\sqrt{2}}\,\big(\,
    d\bar{z}\wedge \alpha\ - \ \iota_{\p/\p \bar{z}}\alpha\,\big). \\
     c(\p/\p t):\, \alpha\ &\mapsto  \ \frac1{i\sqrt{2}}\,\big(\,
    -d\bar{z}\wedge \alpha\ - \ \iota_{\p/\p \bar{z}}\alpha\,\big).
\end{aligned}
\end{equation}

We define an anti-linear anti-involution on  $E$ by 
\[
    \theta\ := \  \begin{pmatrix}
        0&\alpha_{-n}\\\alpha_{n}&0
    \end{pmatrix}\circ c(\p/\p s)
\]
and let $\tau: \Omega^{0,*}(M,L_n\oplus L_{-n})\to \Omega^{0,*}(M,L_n\oplus L_{-n})$ be as in \eqref{E:tau=}.

Consider the Dolbeault-Dirac operators 
\[
    D_{\pm n}\ := \ c(\p/\p s)\, \frac{\p}{\p s}\ + \ c(\p/\p t)\, \frac{\p}{\p t}:\,
    \Omega^{0,*}(M,L_{\pm n})\ \to \  \Omega^{0,*}(M,L_{\pm n}).
\]

By \eqref{E:cds}, $c(\p/\p s)$ commutes and $c(\p/\p t)$ anti-commutes with complex conjugation and, hence, with $\begin{psmallmatrix}
0&-\alpha_n\\\alpha_{n}&0\end{psmallmatrix}$. 
Also $c(\p/\p s)$ commutes and $c(\p/\p t)$ anti-commutes with $c(\p/\p s)$. Therefore, 
\[
    \tau\circ  D_{\pm n}\circ \tau^{-1} \ = \ D_{\mp n}.
\]
In particular, setting $D_{\pm n}^+:= D_{\pm n}\big|_{\Omega^{0,0}(M,L_{\pm n})}$ and $D_{\pm n}^-:= D_{\pm n}\big|_{\Omega^{0,1}(M,L_{\pm n})}$, we obtain
\begin{equation}\label{E:kerDpmn}
    \ker D_{-n}^+  \ = \ \ker D_{n}^-.
\end{equation}

Set 
\[
    D\ := \ D_n\oplus D_{-n}:\,\Omega^{0,*}(M,L_n\oplus L_{-n})\ \to \  \Omega^{0,*}(M,L_n\oplus L_{-n}).
\]
Then $\tau\circ D\circ \tau^{-1}= D$, i.e., $D$ is odd-symmetric. By \eqref{E:kerDpmn},
\begin{multline}
     \ind_\tau D^+\ = \ \dim\ker D^+ \ = \ \dim\ker D_n^+ \ + \ 
     \dim\ker  D_{-n}^-
    \\ = \  \dim\ker D_{n}^+ \ + \ \dim\ker  D_{n}^-   \equiv \ \ind D_n, 
\end{multline} 
where the last equality holds modulo 2. 

Since the Todd class of $M$ is trivial, it follows from the  Atiyah-Singer index theorem that the index of $D_n$  is equal to the Chern class of $L_n$.  We conclude that\textbf{ $\ind_\tau D^+= 0\in \ZZ_2$ } if $n$ is even and $\ind_\tau D^+= 1\in \ZZ_2$ if $n$ is odd.

\section{The relative index theorem}\label{S:relative index}

In this section, we study the  $\tau$-index of odd symmetric  Callias-type operators on non-compact manifolds and prove a  $\ZZ_2$-valued analog of the Gromov-Lawson relative index theorem. In the next section, we prove an analog of the Callias-type index theorem. 

This section's structure and many constructions are borrowed from \cite{BrCecchini17}. 

\subsection{Callias–type operators}\label{SS:Calliasoperators}
Consider a complete Riemannian manifold $M$ and let $E=E^+\oplus E^-$ be a graded complex vector bundle over $M$. We consider a formally self-adjoint  first order elliptic differential operator  $D$  
\[
D\ = \ \begin{pmatrix}
    0&D^-\\D^+&0
\end{pmatrix}
\]
acting on smooth sections of $E$. Here $D^-$ and $D^+$ are differential operators associated with the negative and positive parts of $E$, respectively. We make the following

\begin{assumption}\label{A:bounded principal symbol}
The principal symbol of $D$ is uniformly bounded from above, i.e. there exists a constant $b>0$ such that
\begin{equation}\label{E:bounded principal symbol}
    \big\|\sigma(D)(x,\xi)\big\| \ \le \ b\,|\xi|, 
    \qquad \text{for all}\quad x\in M,\ \xi\in T^*_xM.
\end{equation}
Here $|\xi|$ denotes the length of $\xi$ defined by the Riemannian metric 
,\/ $\sigma(D)(x,\xi): E^{\pm}_x\rightarrow E^{\mp}_x$ is the leading symbol of $D$, and $\|\sigma(D)(x,\xi)\|$ is its operator norm. 
\end{assumption}

Under this assumption, the operator $D$ is essentially self-adjoint by  \cite{GromovLawson83}.
We note that  Assumption~\ref{A:bounded principal symbol} is automatically satisfied if $D$ is a Dirac-type operator. 

Let $F^+:E^+\rightarrow E^-$ be a bundle map. Let $F^-= (F^+)^*$ be the formal adjoint of $F^+$ and set
\[
    F \ := \ \left(\begin{array}{cc}0&F^-\\F^+&0\end{array}\right).
\]

\begin{definition}\label{D:admissible endomorphism}
The endomorphism $F$ is called \emph{admissible} if there exists a compact set $K\subset M$ and a constant $c>0$ such that
\begin{enumerate}
\item[(i)] the restriction of the \textit{anticommutator} 
\[
    \{D,F\} \ := \ D\, F\ + \ F\, D \ = \
    \begin{pmatrix}
        D^-F^++F^-D^+&0\\0&D^+F^-+F^+D^-
    \end{pmatrix}
\]
 to $M\setminus K$ is an operator of order $0$, i.e. a bundle map;
\item[(ii)] for every $x\in M\setminus K$\/ we have
\begin{equation}\label{E:callias condition}
    F(x)^2\ \ge \ \big\|\{D,F\}(x)\big\|\ +  \ c,
\end{equation}
where $\big\|\{D,F\}(x)\big\|$ denotes the operator norm of the bounded operator $\{D,F\}(x):E_x\to E_x$.
\end{enumerate}
In this case, we refer to the compact set $K$ as an \emph{essential support} of $F$.
\end{definition}

\begin{definition}\label{D:Callias}
A (graded) \textbf{Callias-type operator} is an operator of the form
\begin{equation}\label{E:Callias}
	\B= \B_F\ :=\ D\ +\ F\ =\ \begin{pmatrix}
	    0&D^-+F^-\\D^++F^+&0
	\end{pmatrix},
\end{equation}
where $F$ is an admissible endomorphism. We set $\B^\pm:=D^\pm+F^\pm$.
\end{definition}

A Callias-type operator is Fredholm, cf. \cite[Proposition~1.4]{Anghel93Callias}.

Suppose now that we are given an involution $\theta:M\to M$ and that  $(E,\theta^E)$ is a graded quaternionic vector bundle over $(M,\theta)$, cf. Section~\ref{SS:quaternionic}. Assume that $D$ and $F$ are odd symmetric. Then $\B$ is an odd symmetric Callias-type operator, and its $\tau$-index  $\ind_\tau \B$ is defined. The rest of this section is dedicated to the study of this index. 

\begin{lemma}\label{L:essential support}
If the essential support of $F$ is empty, then $\ind_\tau \B= 0$.
\end{lemma}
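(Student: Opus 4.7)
My plan is to exploit the Callias condition \eqref{E:callias condition} globally (since the essential support is empty, the condition holds at every point of $M$) to show that $\B$ itself has trivial kernel, whence $\dim\ker \B^+ = 0$ and the $\tau$-index vanishes.

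The first step is to square $\B$ formally. Since $D$ and $F$ are both formally self-adjoint and the essential support of $F$ is empty, the anticommutator $\{D,F\}$ is a bundle map on all of $M$, and we obtain the pointwise identity
\[
    \B^2 \ = \ (D+F)^2 \ = \ D^2 \ + \ \{D,F\} \ + \ F^2.
\]
Note that $\{D,F\}$ is a self-adjoint bundle endomorphism of $E$, since it is the anticommutator of two formally self-adjoint operators and happens to be of order zero.

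The second step is to bound $F^2 + \{D,F\}$ from below. For a self-adjoint endomorphism $A(x)$ of $E_x$ one has $A(x) \ge -\|A(x)\|\cdot\ID$, so \eqref{E:callias condition} (applied on all of $M$) gives
\[
    F(x)^2 \ + \ \{D,F\}(x) \ \ge \ F(x)^2 \ - \ \bigl\|\{D,F\}(x)\bigr\|\cdot\ID \ \ge \ c\cdot\ID,
    \qquad x\in M.
\]
Combined with $D^2\ge 0$, this yields $\langle \B^2\phi,\phi\rangle\ge c\,\|\phi\|^2$ for every smooth compactly supported section $\phi$, and then for every $\phi$ in the domain of the essentially self-adjoint closure of $\B$ by density.

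The third step is to extract the conclusion. The inequality $\|\B\phi\|^2 \ge c\|\phi\|^2$ forces $\ker\B=\{0\}$, so in particular $\ker \B^+\subset \ker \B$ is also trivial. Hence $\dim\ker \B^+=0$ and $\ind_\tau \B = \ind_\tau\B^+ = 0$ in $\ZZ_2$. There is no serious obstacle here; the only thing to be careful about is that $\{D,F\}$ really is a bundle endomorphism (not a differential operator) on all of $M$, which is exactly what the hypothesis of empty essential support guarantees via condition (i) of Definition~\ref{D:admissible endomorphism}.
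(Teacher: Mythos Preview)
Your proof is correct and follows essentially the same route as the paper's: square $\B$, use the global Callias inequality to bound $F^2+\{D,F\}\ge c$, combine with $D^2\ge0$ to obtain $\B^2\ge c$, and conclude $\ker\B=\{0\}$. You have simply spelled out a few details (the density argument, the inclusion $\ker\B^+\subset\ker\B$) that the paper leaves implicit.
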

\begin{proof}
If \eqref{E:callias condition} holds everywhere on $M$, then 
\[
    \B^2\ = \  D^2\ + \ \{D,F\}\ + \ F^2 \ \ge \ D^2\ + \ c \ > \ 0. 
\]
Hence, $\ker \B=\{0\}.$
\end{proof}

\subsection{The relative index theorem}\label{SS:relativeindex}
Suppose $(E_j,\theta^{E_j})$ ($j=0,1$) are graded quaternionic bundles over complete involutive Riemannian manifolds $(M_j,\theta_j)$ and $\B_j=D_j+F_j$ are odd symmetric Callias-type operators. Let $\tau_j:\Gamma(M_j,E^j)\to \Gamma(M_j,E^j)$ be as in \eqref{E:tau=}. 

We say that a bundle map $\Psi:E_j\to E_k$  ($j,k=0,1)$ is \textit{$\tau$-equivariant} if $\Psi\,\tau_j= \tau_k\,\Psi$. We say that $\Psi$ is graded if $\Psi:E_j^\pm\to E_k^\pm$.

Suppose there are  partitions $M_j=W_j\cup_{N_j} V_j$ into $\theta_j$-invariant submanifolds, such that  $N_j$ are \emph{compact} $\theta_j$-invariant hypersurfaces. 
We make the following
\begin{assumption}\label{A:cut and paste}
There exist $\theta_j$-invariant tubular neighborhoods $U(N_j)$ of $N_j$ ($j=0,1$) and a $\tau$-invariant  diffeomorphism $\psi:U(N_0)\rightarrow U(N_1)$ satisfying the following properties:
\begin{enumerate}
\item[(i)] The restriction of $\psi$ to $N_0$ is a diffeomorphism onto $N_1$;
\item[(ii)] there exists a $\tau$-equivariant isomorphism of graded quaternionic bundles $\Psi:E_0|_{U(N_0)}\rightarrow E_1|_{U(N_1)}$ covering $\psi$;
\item[(iii)] the restrictions $D_0|_{U(N_0)}$ and $D_1|_{U(N_1)}$ are conjugated through the isomorphism $\Psi$.
\end{enumerate}
\end{assumption}

We cut  $M_j$ along $N_j$ and use the map $\psi$ to glue the pieces together interchanging $V_0$ and $V_1$, cf. Figure~\ref{fig:cut and paste}
In this way, we obtain the manifolds 
\[
	M_2:=W_0\cup_NV_1, \qquad M_3:=W_1\cup_NV_0,
\]
where $N\cong N_0\cong N_1$. We refer to $M_2$ and $M_3$ as {\em manifolds obtained from $M_0$ and $M_1$ by cutting and pasting}. They are naturally involutive manifolds. 
\begin{figure}[ht]
    \centering
    \vskip-2cm
    \includegraphics[width=0.90\textwidth]{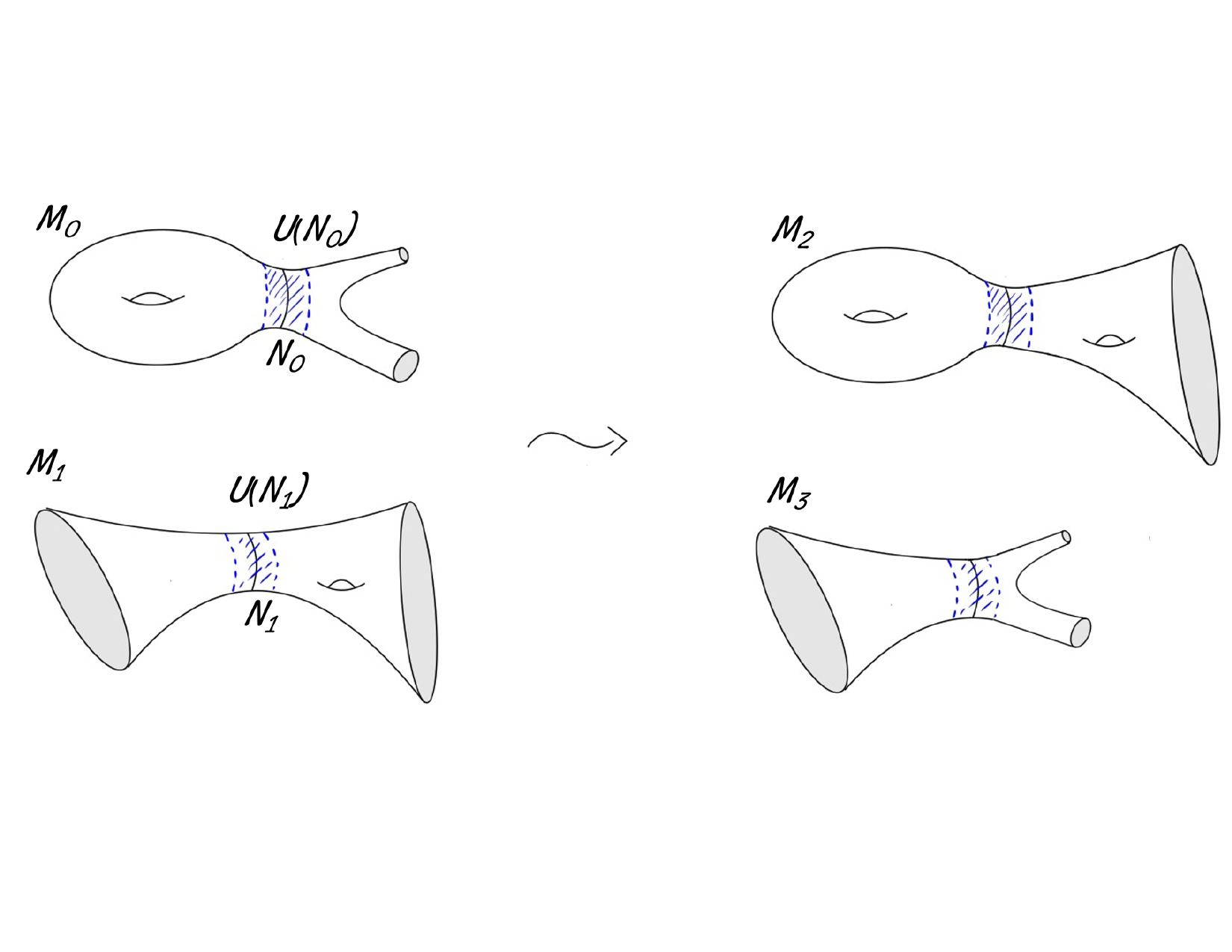}
    \vskip-2.5cm\caption{Obtaining $M_2$ and $M_3$ by cutting and pasting.}
    \label{fig:cut and paste}
\end{figure}

We use the map $\Psi$ to cut the bundles $E_0$, $E_1$ at $N_0$, $N_1$ and glue the pieces together interchanging $E_0|_{V_0}$ and $E_1|_{V_1}$.
With this procedure, we obtain graded quaternionic bundles $E_2\rightarrow M_2$ and $E_3\rightarrow M_3$.
Using Condition~(iii) of Assumption~\ref{A:cut and paste} we construct a formally self-adjoint first order elliptic differential operator $D_2$ acting on smooth sections of $E_2$ satisfying Assumption~\ref{A:bounded principal symbol} and such that $D_2|_{W_0}=D_0|_{W_0}$ and $D_2|_{V_1}=D_1|_{V_1}$.
Similarly, define an elliptic operator $D_3$ acting on smooth sections of the bundle $E_3$. The operators $D_2$ and $D_3$ are odd symmetric. 

Assume $F_0$ and $F_1$ are admissible odd symmetric endomorphisms of the bundles $E_0$ and $E_1$ respectively. For $j=0,1$ we choose positive $\theta$-invariant cut-off functions $\alpha_j,\,\beta_j\in C^\infty(M_j)$ such that:
\begin{enumerate}
 \item \label{C:cut-off1} $\supp\alpha_j\subset W_j\cup U(N_j)\textrm{ and }\supp\beta_j\subset V_j\cup U(N_j)$;
\item \label{C:cut-off2} $\alpha_j=1\textrm{ on }W_j\setminus U(N_j)\textrm{ and }\beta_j=1\textrm{ on }V_j\setminus U(N_j)$;
\item \label{C:cut-off3} $\alpha_j^2+\beta_j^2=1$.
\end{enumerate}
By construction, $E_2|_{W_0\cup\, U(N_0)}\cong E_0|_{W_0\cup \,U(N_0)}$ and $E_2|_{V_1\cup\, U(N_1)}\cong E_1|_{V_1\cup\, U(N_1)}$.
We use this identification and Condition~(\ref{C:cut-off1}) to define the endomorphism $F_2:=F_0\alpha_0+F_1\beta_1$ of $E_2$. In the same way, we define the endomorphism $F_3:=F_1\alpha_1+F_0\beta_0$ of $E_3$.
Observe that $F_2$ and $F_3$ are admissible so that the operators $\B_2:=D_2+F_2$ and $\B_3:=D_3+F_3$ are of Callias-type. We refer to $\B_2$ and $\B_3$ as {\em operators obtained from $\B_0$ and $\B_1$ by cutting and pasting}.

The first result of this section is the following

\begin{theorem}[\textbf{Relative $\ZZ_2$-valued index theorem}]\label{T:relative index}
\[
	\ind_\tau \B_0^+\ + \ \ind_\tau \B_1^+\ =\ 
    \ind_\tau \B_2^+\ +\ \ind_\tau \B_3^+.
\]
\end{theorem}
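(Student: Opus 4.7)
The plan is to rewrite the identity as a comparison between two disjoint-union operators. The left-hand side equals $\ind_\tau(\B_0^+ \oplus \B_1^+)$ on $M_0 \sqcup M_1$, and the right-hand side equals $\ind_\tau(\B_2^+ \oplus \B_3^+)$ on $M_2 \sqcup M_3$. Both disjoint unions are assembled from the same four building blocks $W_0, V_0, W_1, V_1$ glued across two copies of $N$ in two different ways, and Assumption~\ref{A:cut and paste} supplies a $\tau$-equivariant graded bundle isomorphism $\Psi$ that identifies the bundles and the principal parts of the Dirac operators on the tubular neighborhoods of the cutting hypersurfaces.

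The crucial step is a large-parameter $\tau$-equivariant deformation of $F$, performed in parallel on both disjoint unions, that pushes the kernel away from the gluing hypersurfaces. Concretely, I would pick a smooth $\theta$-invariant cut-off supported in $U(N_0) \sqcup U(N_1)$ and, via $\Psi$, in the corresponding region of $M_2 \sqcup M_3$; then use it to scale an odd-symmetric bundle endomorphism that is positive-definite on its support. Mimicking the proof of Lemma~\ref{L:essential support}, an Agmon-type estimate shows that for a sufficiently large deformation parameter $t$, every section in the kernel of the deformed Callias operator is concentrated arbitrarily far from the cut locus. On that complementary region the two deformed operators on $M_0 \sqcup M_1$ and $M_2 \sqcup M_3$ are literally intertwined by $\Psi$, so their kernels are $\tau$-equivariantly isomorphic. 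Combined with the homotopy invariance of the $\tau$-index (Theorem~\ref{T:homotopyindex}) along the deformation path, this yields the desired equality modulo $2$.

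The main obstacle will be constructing the deformation so that it stays inside the class of admissible, graded, odd symmetric Callias endomorphisms for every value of the parameter: condition \eqref{E:callias condition} must survive outside some (possibly enlarged) compact essential support as $t$ grows, and the cut-off together with the auxiliary bundle map must be chosen compatibly with both the $\ZZ_2$-grading and the quaternionic structure $\theta^E$. A more subtle point specific to the $\tau$-index is that we only track the parity of $\dim \ker$: the $\tau$-equivariance of $\Psi$ ensures that the induced isomorphism of kernels intertwines the anti-involutions, so Kramers' degeneracy (Lemma~\ref{L:even}) transfers parity correctly piece-by-piece across the cut-and-paste. Finally, one must verify that the family stays in $\widehat{\calB}_\tau(W,H)$ throughout the deformation so that Theorem~\ref{T:homotopyindex} applies; this requires uniform control of the Sobolev domain and a modest elliptic regularity argument, but does not involve the $\tau$-structure in any essential way.
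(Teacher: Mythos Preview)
Your proposal has a genuine gap at the step where you pass from ``kernel elements are concentrated away from the cut locus'' to ``their kernels are $\tau$-equivariantly isomorphic.'' The two deformed operators agree \emph{as local differential expressions} on $M\setminus U(N)$, but a kernel element on $M_0$ is a global section: it satisfies the equation on all of $M_0$, including the neck, and its small-but-nonzero tail there encodes the matching between the $W_0$-part and the $V_0$-part. Cutting it off and transplanting to $M_2=W_0\cup_N V_1$ produces a section that fails the equation by an error supported in $U(N)$; showing this error can be corrected without changing the parity of $\dim\ker$ is precisely the hard part, and Agmon decay alone does not do it. (A secondary issue: an endomorphism that is odd with respect to the grading is block off-diagonal, hence cannot be positive-definite; presumably you mean its square is, but this should be said.) Your invocation of Lemma~\ref{L:even} at the end is also misplaced: if you genuinely had a $\tau$-equivariant isomorphism of kernels you would get equality of dimensions on the nose, not just mod~$2$, and Kramers' degeneracy would be irrelevant.

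The paper takes a completely different and much shorter route. It forms the single operator $\B=\B_0\oplus\B_1\oplus\B_2\oplus\B_3$ on the disjoint union, with the grading on $E_2,E_3$ reversed, so that the theorem becomes the single assertion $\ind_\tau\B^+=0$. Then it imports from \cite{BrCecchini17} an explicit self-adjoint bundle involution $U$ built from the cut-off functions $\alpha_j,\beta_j$, which is odd with respect to the grading, odd symmetric (because the cut-offs are $\theta$-invariant), and satisfies $\B+U\B U=$ compactly supported bundle map. After the compact perturbation, the operator $\tfrac12(\B-U\B U)$ anticommutes with the \emph{even} anti-linear anti-involution $\tau U$; hence $\tau U$ acts on $\ker\bigl(\tfrac12(\B-U\B U)\bigr)^+$, and Kramers' degeneracy forces this kernel to be even-dimensional. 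No localization, no Agmon estimates, no transplantation of sections: the whole argument is algebraic once $U$ is in hand. This is where Lemma~\ref{L:even} is actually used, and it is the heart of the proof.
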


\begin{remark}
For the usual $\ZZ$-valued index this theorem was proved by Gromov and Lawson in \cite{GromovLawson83}. A $K$-theoretical version has been proved by Bunke in \cite{Bunke95}. Our formulation of the relative index theorem is close to this last one.
\end{remark}

\subsection{The proof of Theorem~\ref{T:relative index}}
Set $M:=M_1\sqcup M_2\sqcup M_3\sqcup M_4$ and let $\theta$ be the involution on $M$ induced by $\theta_j$ $(j=0,1,2,3,4)$.
We use $E_1$, $E_2$, $E_3$ and $E_4$ to construct the bundle
\begin{equation}\label{E:bundleEoplus}
E:=E_1\oplus E_2\oplus E_3^\textrm{op}\oplus E_4^\textrm{op}
\end{equation}
over $M$, where the superscript ``op" means that we consider the opposite grading on the fibers of $E_3$ and $E_4$. Then $\theta^{E_j}$ induces an anti-involution $\theta^E$ on $E$, which gives $E$ a structure of a graded quaternionic vector bundle over $M$.

Consider the Callias-type operator 
\begin{equation}\label{E:decomposition of B}
\B \ :=\ \B_1\oplus \B_2\oplus \B_3\oplus \B_4
\end{equation}
acting on smooth sections of $E$. Our grading agreement \eqref{E:bundleEoplus} implies that 
\[
    \B^+\ = \ \B_1^+  \oplus \B_2^+ \oplus \ \B_3^-\oplus B_4^-.
\]
Since the operators $\B_j$ are odd symmetric we have $\dim\ker \B_j^+= \dim\ker \B_j^-$. Thus we have the following chain of equalities in $\ZZ_2$:
\begin{multline}\label{E:tau-index of B}
    \ind_\tau \B^+\ := \ \dim\ker \B^+\ = \ 
    \dim\ker \B_1^+\ + \ \dim\ker \B_2^+\ + \ \dim\ker \B_3^-\ + \ \dim\ker \B_4^+
    \\ = \ 
    \ind_\tau \B_1^++\ind_\tau \B_2^+\ + \ \ind_\tau \B_3^+ \ + \ \ind_\tau \B_4^+\ \in \ \ZZ_2.
\end{multline}
Hence, Theorem~\ref{T:relative index} is equivalent to the equality $\ind_\tau \B^+=0$.

For $j=1,2$, let $\alpha_j$, $\beta_j\in C^\infty(M_j)$ be the  cut-off functions defined in Section ~\ref{SS:relativeindex}. Using these function in \cite[\S3.3]{BrCecchini17}  the authors constructed a self-adjoint bundle map $U\in C^\infty(M,\End(E))$ with the following properties:
\begin{enumerate}
\item \label{U:property1}$U^2=\ID_E$;
\item\label{U:odd} $U:E^\pm\to E^\mp$ is odd with respect to the grading (to have this property we need to define the grading in $E$ by \eqref{E:bundleEoplus});
\item \label{U:property2} the anticommutator $\{\B,U\}:=\B U+U\B$ is an even compactly supported endomorphism of the bundle $E$.
\end{enumerate}
This operator is given by an explicit formula in terms of the cut-off functions $\alpha_j$, $\beta_j\in C^\infty(M_j)$ ($j=0,1$). One readily sees that, since these functions are $\theta_j$-invariant, the operator $U$ is odd symmetric. Thus so is the operator $U\B U$. By Condition~(\ref{U:property1}), $U^{-1}=U$. \\

From Condition~(\ref{U:property2}) we see that $W:= U\B U+\B$ is an odd symmetric bundle map with compact support. Consider the operator
\begin{equation}\label{E:B+UB U}
    \frac{\B-U\B U}{2} \ = \ 
    \B\ - \ \frac{W}{2}.
\end{equation}

Let $(\B-U\B U)^+$ denote the restriction of $\B-U\B U$ to $E^+$. By Lemma~\ref{L:compactperturbation}, 
\[
    \ind_\tau \B^+\ = \ \ind_\tau \frac{(\B-U\B U)^+}{2}.
\]
Since $U$ is odd symmetric and self-adjoint, $U\,\tau= \tau\, U$. Hence,  the map $\tau\, U:\Gamma(M,E)\to \Gamma(M,E)$ is an anti-unitary anti-involution of even grading degree, i.e. $\tau\, U:\Gamma(M,E^+)\to \Gamma(M,E^+)$.  Since $\B$  and U are odd symmetric, we have
\[
    (\tau\, U)\,\frac{(\B-U\B U)^+}{2}\, (\tau\, U)^{-1}\ 
    = \ -\,\frac{(\B-U\B U)^+}{2}.
\]
Hence, $\tau\, U$ acts on the kernel of $(\B-U\B U)^+$.  By Lemma~\ref{L:even},  the dimension of the kernel of $(\B-U\B U)^+$ is even. Thus, by \eqref{E:B+UB U} 
\[
    \ind_\tau \B\ = \ \ind_\tau \frac{(\B-U\B U)^+}{2}\ \equiv \ 
    \dim\ker \frac{(\B-U\B U)^+}{2} \ = \ 0 \ \in \ \ZZ_2,
\]
as required. \hfill $\square$

\section{The  Callias-type theorem}\label{S:callias}

In this section, we consider the case of a Dirac-type Callias-type operator and prove a $\ZZ_2$-valued analog of the Callias index theorem, \cites{Anghel93Callias,Bunke95}.

\subsection{A (generalized) Dirac operator}\label{SS:Dirac}
 Suppose now that $E$ is a {\em Dirac bundle} over a complete oriented Riemannian manifold $M$, cf. \cite[Definition~II.5.2]{LawMic89}. By this, we mean that it is endowed with  {\em Clifford action} $c:TM\to \End(E)$, such that $c(v)^2=-|v|^2$ ($v\in TM$), a Hermitian metric $h^E$ with respect to which the operators $c(v)$  are skew-adjoint, and a Hermitian {\em Clifford connection} $\n^E$ which satisfies the \textit{Leibniz rule} 
 \begin{equation}\label{E:Leibniz}
     \big[\n^E,c(v)\big] \ = \  c(\n^{LC}v),
 \end{equation}
 where $\n^{LC}$ denote the Levi-Civita connection on $TM$. Then, the {\em (generalised) Dirac operator} is defined by 
\begin{equation}\label{E:Dirac}
     \dirac\ := \ \sum_{1\le j\le n}\, c(e_i)\n^E_{e_j}:\,\Gamma(M,E)\to \Gamma(M,E), 
\end{equation}
where $(e_1,\ldots,e_n)$ is an orthonormal frame of $TM$. Then $\dirac$ is a formally self-adjoint differential operator, which is independent of the choice of the frame $(e_1,\ldots,e_n)$, cf. \cite[\S3.3]{BeGeVe}.  

Consider the operator 
\begin{equation}\label{E:Gamma}
    \Gamma\ := \  i^{[\frac{n+1}{2}]} c(e_1)\cdot c(e_2)\cdots c(e_n),
\end{equation}
where $[x]$ denotes the integer part of $x$. Then, \cite[Lemma~3.17]{BeGeVe},  $\Gamma$ is an involution $\Gamma^2= 1$ which is independent of the choice of the frame $(e_1,\ldots,e_n)$. 

Consider, first, the case when $n$ is even. Then, \cite[Lemma~3.17]{BeGeVe}, $\Gamma$ anti-commmutes with $c(v)$. Let $E^+$ and $E^-$ denote the eigenspaces of $\Gamma$ with eigenvalues 1 and $-1$ respectively. Then $E=E^+\oplus E^-$ and $c(v):E^\pm\to E^\mp$. The space of sections of $E$ is also graded by 
\[
    \Gamma(M,E)\ = \ \Gamma(M,E^+)\oplus \Gamma(M,E^-),
\]
and the Clifford connection $\n^E$ preserves the grading. It follows that the Dirac operator $\dirac$ is {\em odd with respect to the grading}, i.e., $\dirac:\Gamma(M,E^\pm)\to \Gamma(M,E^\mp)$. We denote by $\dirac^\pm$ the restriction of $\dirac$ to $\Gamma(M,E^\pm)$. Then, $\dirac^\pm$ is the formal adjoint of $\dirac^\mp$, and, with respect to the grading, the Dirac operator has the form 
\begin{equation}\label{E:Diracgrading}
    \begin{pmatrix}
        0&\dirac^-\\ \dirac^+&0
    \end{pmatrix}.
\end{equation}

If $n$ is odd, then $\Gamma$ commutes with $c(v)$ ($v\in TM$) and does not define a useful grading on $E$. So we consider $E$ as an ungraded vector bundle.

\subsection{An odd symmetric Dirac operator}\label{SS:oddDirac}
Let  $(E,\theta^E)$ be a quaternionic bundle over an involutive manifold $(M,\theta)$, which is also a Dirac bundle (no connections between the two structures on $E$ yet). If $n$ is even, we assume, in addition, that $\theta^E$ is odd with respect to the grading.

Let $\tau$ be as in \eqref{E:tau=}. Suppose $\dirac$ is odd symmetric,  $\tau\, \dirac\, \tau^{-1}= \dirac$. If $n$ is even, that means that $\dirac^\pm= \tau\, \dirac^\mp\, \tau^{-1}$.

\subsection{A relationship between the quaternionic and the Clifford structures}\label{SS:tau-c}
Let $\theta_*:T_xM\to T_{\theta x}M$ denote the differential of $\theta$. Since $\theta$ is an involution, the $\theta_*= \theta_*^{-1}= \theta^*$, where $\theta^*$ is the pull-back map of on the cotangent bundle, which we identified with $TM$.   Comparing the leading symbols of $\dirac$ and  $\tau\,\dirac\, \tau^{-1}$   we conclude that
\begin{equation}\label{E:c-tau}
       \theta^E \, c(\theta_*\xi)\,(\theta^E)^{-1}\ = \ c(\xi).
\end{equation}
One then readily sees that the equation $\tau\dirac\tau^{-1}=\dirac$ implies that 
\begin{equation}\label{E:n-tau}
    \theta^E\,\n^E_{\theta_*\xi}\,   (\theta^E)^{-1}\ = \ \n^E_\xi.
\end{equation}
      
\subsection{A potential}\label{SS:oddpotential}
Let $V:E\to E$ be an odd symmetric bundle map. If $n$ is even, we assume that $V$ is odd with respect to the grading, i.e.
\begin{equation}\label{E:F=}
        V\ = \ 
    \begin{pmatrix}
        0&V^-\\ V^+&0
    \end{pmatrix}.
\end{equation}

Consider the Dirac-type operator (or \textit{generalized Dirac operator with potential})
\begin{equation}\label{E:Diractype} 
    D_V^+\ := \ \begin{cases}
        \dirac^+\ + \ V^+, \qquad&\text{if $n$ is even}\\
        \dirac\ + \ V, \qquad&\text{if $n$ is odd}.
    \end{cases}
\end{equation}
Then $D_V^+$ is odd symmetric. 

We have a ``+" in the notation for the Dirac-type operator in the odd-dimensional case to be able to study both even and odd-dimensional cases simultaneously. To this end, if $n$ is odd,  we set $D_V^-:= (D_V^+)^*$ and consider the graded operator 
\[
    D_V\ := \ \begin{pmatrix}
        0&D_V^-\\ D_V^+&0
    \end{pmatrix}
    \ = \ \begin{pmatrix}
        0&\dirac+V^*\\ \dirac+V&0
    \end{pmatrix}:\, \Gamma(M,E\oplus E)\ \to \ \Gamma(M,E\oplus E).
\]

\subsection{The $\tau$-index of an odd symmetric Dirac-type operator}\label{SS:indexDirac}
Suppose now that $D_V$ is Fredholm. In particular, this is always the case when $M$ is compact.  Then the $\tau$-index $\ind_\tau D_V^+\in \ZZ_2$ is defined by \eqref{E:tauindex}.

If $M$ is compact, then it follows from Lemma~\ref{L:compactperturbation}, that $\ind_\tau D_V^+= \ind_\tau\dirac^+$. But in this section, we will be mostly interested in the non-compact case. However, the following analog of the classical result for the usual $\ZZ$-valued index will play a role:

\begin{lemma}\label{L:vanishing on compact}
Suppose $M$ is a compact involutive manifold of odd dimension and let $D_V^+=\dirac+V$ be an odd symmetric Dirac-type operator on a quaternionic bundle $(E,\theta^E)$  over $M$. Then $\ind_\tau D_V^+= 0$.
\end{lemma}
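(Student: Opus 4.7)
The plan is a two-step reduction: first remove the potential $V$ via a compact perturbation argument, then exploit the self-adjointness of $\dirac$ together with Kramers' degeneracy.

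\smallskip

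For the first step, I would observe that, since $M$ is compact and $V$ is a bundle map, $V$ acts as a bounded operator $L^2(M,E)\to L^2(M,E)$, and the Sobolev embedding $W^{1,2}(M,E)\hookrightarrow L^2(M,E)$ is compact by Rellich's lemma. Hence $V:W^{1,2}(M,E)\to L^2(M,E)$ is a compact operator. By assumption it is also odd symmetric with respect to $\tau$. Applying Corollary~\ref{C:compact perturbation} to the pair $(\dirac,V)$ yields
\[
    \ind_\tau D_V^+\ =\ \ind_\tau(\dirac+V)\ =\ \ind_\tau \dirac.
\]

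\smallskip

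For the second step, I would use the fact that on a compact manifold the Dirac operator $\dirac$ is essentially self-adjoint, so $\dirac^*=\dirac$. Combining this with the odd-symmetric condition $\tau\,\dirac\,\tau^{-1}=\dirac^*$ gives $\tau\,\dirac=\dirac\,\tau$. Consequently, for any $\sigma\in\ker\dirac$ we have $\dirac(\tau\sigma)=\tau\dirac\sigma=0$, so $\tau$ preserves $\ker\dirac$. Since $\dirac$ is elliptic and $M$ is compact, $\ker\dirac$ is finite-dimensional, and $\tau$ restricts on it to an anti-linear anti-involution. Kramers' degeneracy (Lemma~\ref{L:even}) then forces $\dim\ker\dirac$ to be even, so
\[
    \ind_\tau D_V^+\ =\ \ind_\tau \dirac\ =\ \dim\ker\dirac\bmod 2\ =\ 0.
\]

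\smallskip

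There is no serious obstacle — the two facts being invoked are already established in Sections~\ref{S:tauindex}--\ref{S:odddirac}. The only subtle point, worth flagging in the write-up, is \emph{why} the odd-dimensional hypothesis is essential: in the odd-dimensional case $D_V^+=\dirac+V$ acts within the single bundle $E$, and after the compact perturbation we land on the self-adjoint operator $\dirac$, to whose kernel $\tau$ restricts. In the even-dimensional case the analogous operator $\dirac^+$ maps $E^+\to E^-$, so $\tau$ would send $\ker\dirac^+$ into $\ker\dirac^-$ rather than back into $\ker\dirac^+$, and the Kramers argument breaks down.
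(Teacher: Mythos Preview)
Your proof is correct and matches the paper's own argument essentially line for line: reduce to $\ind_\tau\dirac$ via the compact perturbation lemma, then use that self-adjointness plus odd symmetry force $\tau$ to commute with $\dirac$, so Kramers' degeneracy (Lemma~\ref{L:even}) makes $\dim\ker\dirac$ even. Your added remark on why the odd-dimensional hypothesis is needed is a nice clarification that the paper leaves implicit.
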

\begin{proof}
It suffices to show that $\ind_\tau\dirac=0$. Since $\dirac$ is odd symmetric and self-adjoint, it commutes with $\tau$.  It follows that $\tau$ acts on $\ker\dirac$. Hence, by Lemma~\ref{L:even}, the dimension of the kernel of $\dirac$ is even. Thus 
\[
    \ind_\tau D_V^+\ =\ \ind_\tau\dirac\ \equiv \ \dim\ker\dirac \ = \ 0\ \in \ \ZZ_2.
\]
\end{proof}

\medskip

Next, we consider several examples of the Dirac-type operators with non-trivial $\tau$-index. 
\subsection{A ungraded Callias-type operator of Dirac-type}\label{SS:CalliasDirac}
Let $(M,\theta)$ be a complete involutive manifold \textbf{of odd dimension} $n$. Let $(E,\theta^E)$ be a quaternionic Dirac bundle over $(M,\theta)$ and let $\dirac$ be the corresponding generalized Dirac operator. We assume that $\dirac$ is odd symmetric. Let $V:E\to E$ be an odd symmetric bundle map and consider the odd symmetric Dirac-type operator
\begin{equation}\label{E:D+V Callias}
        D^+_V \ = \ \dirac \ + \ V:\,\Gamma(M,E)\to \Gamma(M,E).
\end{equation}
As before, we set $D^-_V:= (D^+_V)^*$ and consider the operator $D_V= \begin{psmallmatrix}
  0& D^-_V\\
  D^+_V & 0
\end{psmallmatrix}$. 

Let $\Phi:E\to E$ be a self-adjoint odd symmetric bundle map. Then the operator 
\[
    B_\Phi\ := \ D_V^+\ + \ i\Phi
\]
is odd symmetric. To connect it to the graded Callias-type operator of Section~\ref{SS:Calliasoperators}, we define
\[
  F\ = \ \begin{pmatrix}
        0&F^-\\F^+&0
    \end{pmatrix} \ := \ 
    \begin{pmatrix}
        0&-i\Phi \\i\Phi&0
    \end{pmatrix}
\]
and set 
\[
    \B_F\ := \  D_V\ + \ F \ = \  \begin{pmatrix}
        0&B_\Phi^*\\ B_\Phi&0
    \end{pmatrix}\ = \ 
    \begin{pmatrix}
        0&D^-_V - i\Phi\\D^+_V+ i\Phi&0
    \end{pmatrix}.
\]
Then $\B_F$ is a graded odd symmetric operator acting on $\Gamma(M,E\oplus E)$.  As usual, we set $\B_F^+:= B_\Phi$,  $\B_F^-:= B_\Phi^*$.

\begin{definition}\label{D:ungradedCallias}
We say that $B_\Phi= D^+_V+i\Phi$ is an {\em ungraded Callias-type operator of Dirac type} if $\B_F$ is a Callias-type operator in the sense of Definition~\ref{D:Callias}.
\end{definition}
The admissibility conditions of Definition~\ref{D:admissible endomorphism} become, in this case, 
\begin{enumerate}
\item[(i)]  the restriction of the commutator $[D_V^+,\Phi] \ := \ D_V^+\, \Phi\ - \ \Phi\, D_V^+$ to $M\setminus K$ is an operator of order $0$;
\item[(ii)] there exist a constant $c>0$ such that
\begin{equation}\label{E:ungraded admissible endomorphism}
    \Phi(x)^2\ \ge \ \big\|[D_V^+,\Phi](x)\big\| \ +\ c, \qquad
    \text{for all}\quad x\in M\setminus K.
\end{equation}
\end{enumerate}

The compact set $K$ is called an \textit{essential support of $\Phi$}.

Note that Condition (i)  is equivalent to
\begin{equation}\label{E:[Phi,c]}
	[c(\xi),\Phi]\ = \ 0, \qquad\text{for all}\quad \xi\in T_xM,\ x\in  M\setminus K.
\end{equation}

In this section, we study the $\tau$-index of the operator $B_\Phi=B_F^+$. 

\subsection{The $\ZZ_2$-valued Callias-type theorem}\label{SS:Calliastheorem}
Suppose that there is a \textit{$\theta$-invariant  partition} $M=M_-\cup_N M_+$, where $N=M_-\cap M_+$ is a smooth compact hypersurface and $M_-$ is a \textbf{compact} submanifold, whose interior contains an essential support $K$ of $F$, cf. Figure~\ref{fig:Callias theorem}.  The $\theta$-invariance of the partition meas that $M_\pm$ are $\theta$-invariant submanifolds of $M$. Then $N$
 is also $\theta$-invariant. 
\begin{figure}[ht]
    \centering
    \vskip-.5cm
    \includegraphics[width=0.50\textwidth]{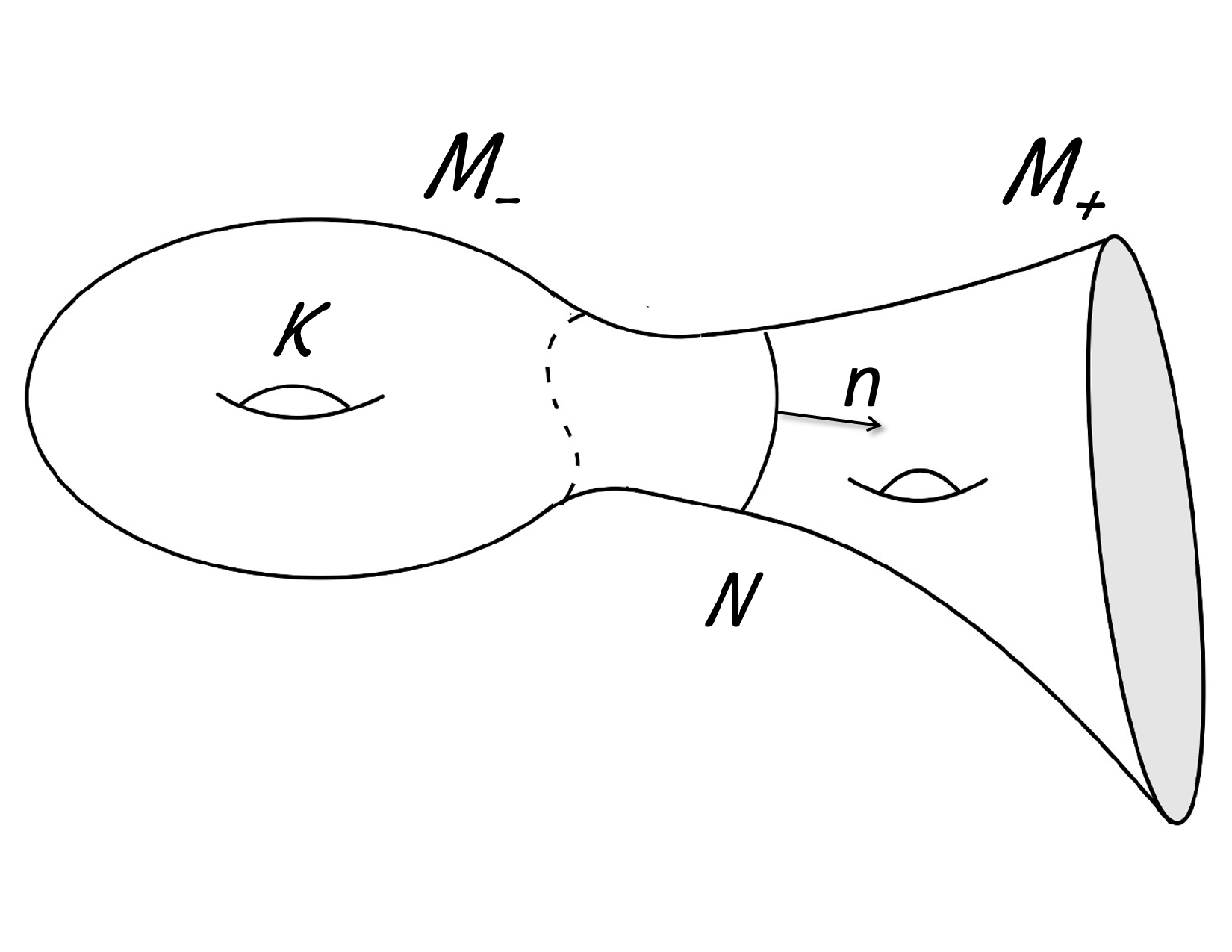}
    \vskip-1.5cm\caption{A $\theta$-invariant partition of $M$}
    \label{fig:Callias theorem}
\end{figure}

We denote by $n$ the unit normal vector field to $N$ pointing in the direction of $M_+$, cf. Figure~\ref{fig:Callias theorem}. Since $M_-$ is compact and $M_+$ is not compact, $\theta:M_\pm\to M_\pm$. It follows that $\theta_*n= n$. Hence, \eqref{E:c-tau} implies that  $c(n)$ 
\begin{equation*}
    \theta^E \, c\big(n(x)\big)\,(\theta^E)^{-1}\ = \ c\big(n(\theta x)\big).
\end{equation*}
or, equivalently, 
\begin{equation}\label{E:tau n}
    \tau\, c(n)\, \tau^{-1}\ = \ c(n).
\end{equation}

Let $E_N$ be the  restriction of $E$ to $N\subset M$.  Since $N$ is $\theta$-invariant submanifold, $E_N$ is a $\theta^E$-invariant and, hence, is a quaternionic bundle over $(N,\theta)$.

By Condition (ii) of Definition~\ref{D:admissible endomorphism}, zero is not in the spectrum of $\Phi(x)$ for all $x\in N$. Therefore we have a direct sum decomposition
\begin{equation}\label{E:decomposition of Sigma_N}
E_N=E_{N+}\oplus E_{N-},
\end{equation}
where the fiber of $E_{N+}$ (resp. $E_{N-}$) over $x\in N$ is the image of the spectral projection of $\Phi(x)$ corresponding to the interval $(0,\infty)$ (resp. $(-\infty,0)$).
By \eqref{E:[Phi,c]} the endomorphism $\Phi$ commutes with the Clifford multiplication. Hence $c(\xi):E_{N\pm}\to E_{N\pm}$ for all $\xi\in TM$. It follows that both bundles, $E_{N+}$ and $E_{N-}$, inherit  the Clifford action of $TM$. Also, since $\Phi$ commutes with $\tau$, both bundles $E_{N+}$ and $E_{N-}$ are quaternionic bundles over $N$.

Recall that we denote by $n$ the unit normal vector field pointing at the direction of $M_+$. The operator $c(n)$ defines an endomorphism $\gamma:E_{N\pm}\to E_{N\pm}$.
Since $\gamma^2=-1$, the endomorphism $\alpha:=-i \gamma$  is an involution and induces a grading 
\begin{equation}\label{E:grading on SigmaN}
	E_{N\pm}\ = \ E_{N\pm}^+\oplus E_{N\pm}^-,
\end{equation}
where $E_{N\pm}^\pm\subset E_{N\pm}$ is the span of the eigenvectors of $\alpha$ with eigenvalues $\pm1$. The Clifford action of $TN$ on $E_{N\pm}$ is graded with respect to this grading, i.e. $c(\xi):E_{N\pm}^\pm\to E_{N\pm}^\mp$ for all $\xi\in TN$. By \eqref{E:tau n}, $\tau$ is odd with respect to this grading (recall that $\tau$ is anti-linear and, hence, anti-commutes with $\alpha= -ic(n)$). Hence, $E_{N\pm}$ are graded quaternionic vector bundles over $N$.

Let $\nabla^{E_N}$ be the connection on $E_N$ obtained by restricting the connection on $E$. It does not, in general, preserve decomposition~\eqref{E:decomposition of Sigma_N}. 
We define a connection $\nabla^{E_{N\pm}}$ on the bundle $E_{N\pm}$ by 
\begin{equation}\label{E:nSigmaN+}
	\nabla^{E_{N\pm}}_\xi s^\pm\ =\ 
	\operatorname{pr}_{E_{N\pm}}\left(\nabla^{E_N}_\xi s^\pm\right),
	\qquad s^\pm\in C^\infty(N,E_{N\pm}),\ \xi\in TN,
\end{equation}
where $\operatorname{pr}_{E_{N\pm}}$ is the projection onto the bundle $E_{N\pm}$. Since $\gamma$ commutes with both, the connection $\nabla^{E_N}$ and the projection $\operatorname{pr}_{E_{N\pm}}$, it also commutes with the connection $\nabla^{E_{N\pm}}$.
By \cite[Lemma~2.7]{Anghel90},  it is a Hermitian Clifford connection. We denote by  $\dirac_{N}:= \left(\begin{smallmatrix}
0&\dirac_{N-}\\ \dirac_{N+}&0\end{smallmatrix}\right)$ the Dirac operator on $N$ associated with these connections:
\begin{equation}\label{E:DN}
	\dirac_{N\pm}\ := \ \sum_{i=1}^{n-1} c(e_i)\,\nabla^{E_{N\pm}}_{e_i}.
\end{equation}
where $(e_1,\ldots,e_{n-1})$ is an orthonormal frame of $TN$. The operators $\dirac_{N\pm}$ are odd symmetric and are odd  with respect to the grading \eqref{E:grading on SigmaN}, i.e. they have the form 
\[
	\dirac_{N\pm}\ = \ \begin{pmatrix}
	0&\dirac_{N\pm}^-\\\dirac_{N\pm}^+&0
	\end{pmatrix},
\]
where $\dirac_{N\pm}^+$ (respectively $\dirac_{N\pm}^-$) is the restriction of $\dirac_{N\pm}$ to $E_{N\pm}^+$ (respectively $E_{N\pm}^-$).

The next theorem is a $\ZZ_2$-valued analog of the Callias index theorem,  
 \cite[Theorem~1.5]{Anghel93} and \cite[Theorem~2.9]{Bunke95}.

\begin{theorem}[\textbf{Callias-type theorem for odd symmetric operators}]\label{T:computation of odd-dimensional case}
\begin{equation}\label{E:Callias-type theorem for odd symmetric operators}
	\ind_\tau B_\Phi\ =\ \ind_\tau \dirac_{N+}^+.
\end{equation}
\end{theorem}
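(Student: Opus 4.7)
The plan is to adapt the classical Callias-type theorem proof from \cite{BrCecchini17}, incorporating $\tau$-equivariance at every step. The strategy has two stages: first, use the relative $\ZZ_2$-valued index theorem (Theorem~\ref{T:relative index}) together with a sequence of admissible $\tau$-equivariant homotopies (Theorem~\ref{T:homotopyindex} and Lemma~\ref{L:compactperturbation}) to reduce to a standard cylindrical model on $N\times[0,\infty)$ with a product potential; second, perform explicit separation of variables on the cylinder and read off the kernel via the line example (Lemma~\ref{L:indexnormalization}) combined with Kramers' degeneracy (Lemma~\ref{L:even}).

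For the reduction, I would use the $\theta$-invariant tubular neighborhood $N\times(-\varepsilon,\varepsilon)$ of $N$ (which exists since $N$ is $\theta$-invariant and $\theta_*n=n$) to apply Theorem~\ref{T:relative index}, pasting $M_-$ together with a cylinder $N\times[0,\infty)$ on which the geometry is a product and $\Phi$ has the normal form
\[
    \Phi_0(x,t)\ =\ \rho(t)\cdot\bigl(\mathrm{pr}_{E_{N+}}\ -\ \mathrm{pr}_{E_{N-}}\bigr),
\]
extended by parallel transport, where $\rho(t)$ is a smooth increasing function with $\rho(t)=t$ for large $t$. The decomposition $E_N=E_{N+}\oplus E_{N-}$ is $\tau$-invariant because $\tau$ commutes with $\Phi$, so both the cutoff functions $\alpha_j,\beta_j$ and the normal-form potential can be chosen $\tau$-equivariantly; moreover, the homotopy from $\Phi$ to $\Phi_0$ along the cylinder can be taken through admissible odd-symmetric bundle maps, so $\ind_\tau B_\Phi$ is unchanged.

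On the cylindrical end, using \eqref{E:Leibniz}, \eqref{E:c-tau}, \eqref{E:n-tau}, the Dirac operator takes the form $\dirac=c(n)(\p_t+\dirac_N)$ (modulo a zeroth-order term coming from the mean curvature, which vanishes on the product) where $\dirac_N$ is the boundary Dirac operator. Conjugating $B_{\Phi_0}$ by $c(n)$ and using that $\Phi_0$ commutes with Clifford multiplication and preserves the $E_{N\pm}$ decomposition, the operator decouples as
\[
    B_{\Phi_0}\big|_{E_{N\pm}}\ \sim \ \p_t\ +\ \dirac_{N\pm}\ \pm\ i\rho(t),
\]
and one can separate variables using the spectral decomposition of the self-adjoint operators $\dirac_{N\pm}$ on the compact manifold $N$. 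For each eigenvalue $\lambda$ of $\dirac_{N+}$, the resulting ODE on $[0,\infty)$ is of the type analyzed in Section~\ref{SS:DAline}, whose $L^2$-kernel is one-dimensional precisely when $\lambda=0$ (the sign of $\rho(t)$ for large $t$ selects the exponentially decaying solution); the analogous ODEs on the $E_{N-}$ component have no $L^2$-solutions because $\rho$ has the wrong sign. Matching with a solution on the compact piece $M_-$ and counting dimensions, one gets $\ker B_{\Phi_0}\cong \ker\dirac_{N+}$ as a $\tau$-equivariant space, with the grading on $E_{N+}$ inherited from $\alpha=-ic(n)$.

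The final step is a parity count. Since $\tau$ restricts to an anti-linear anti-involution on $\ker\dirac_{N+}$ that is odd with respect to the grading, it interchanges $\ker\dirac_{N+}^+$ and $\ker\dirac_{N+}^-$, so $\dim\ker\dirac_{N+}=2\dim\ker\dirac_{N+}^+\pmod{\text{something}}$; more precisely, the same anti-involution argument used in the proof of Theorem~\ref{T:homotopyindex} (applying Lemma~\ref{L:even} to positive eigenspaces of $\dirac_{N+}^-\dirac_{N+}^+$) shows that the parity of $\dim\ker B_\Phi$ equals $\dim\ker\dirac_{N+}^+\bmod 2$. The main obstacle will be to execute the cut-and-paste reduction to the product model cleanly while tracking $\tau$-equivariance of every cutoff, connection, and homotopy, and to rigorously match the ODE analysis with the compact part $M_-$ so that the mod-2 kernel count really localizes to the $E_{N+}^+$ sector of $\dirac_{N+}$; all other eigenmodes and the entire $E_{N-}$ sector must be shown to contribute even dimensions via Kramers' degeneracy.
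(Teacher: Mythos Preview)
Your overall strategy follows the paper's template, but there are two genuine gaps, one of which is fatal.

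\textbf{The kernel identification is wrong, and your parity step collapses.} On the model cylinder the kernel of $B_{\Phi_0}$ (restricted to the $E_{N+}$ piece) is \emph{not} all of $\ker\dirac_{N+}$ but only $\ker\dirac_{N+}^+$. The grading operator $\alpha=-ic(n)$ enters the ODE through $\gamma\,\partial_t$, so for $e\in\ker\dirac_{N+}^+$ (where $\gamma=i$) the radial equation is $f'=-hf$ with an $L^2$ solution, while for $e\in\ker\dirac_{N+}^-$ (where $\gamma=-i$) it is $f'=hf$ with no $L^2$ solution. Your conjugation by $c(n)$ hides exactly this sign, and you lose the selection of the $E_{N+}^+$ sector. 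This matters because your proposed parity fix cannot work: since $\tau$ interchanges $\ker\dirac_{N+}^+$ and $\ker\dirac_{N+}^-$, one always has $\dim\ker\dirac_{N+}=2\dim\ker\dirac_{N+}^+$, which is even. If $\ker B_{\Phi_0}\cong\ker\dirac_{N+}$ held, you would conclude $\ind_\tau B_\Phi=0$ for every $\Phi$, contradicting the examples in Section~\ref{S:odddirac}. No Kramers-degeneracy argument on positive eigenspaces can recover $\dim\ker\dirac_{N+}^+\bmod 2$ from the number $0$. The correct statement, which the paper takes from \cite[Corollary~4.10]{BrCecchini17}, is $\dim\ker\mathbf{M}_+=\dim\ker\dirac_{N}^+$ on the full cylinder $N\times\RR$, and then no further parity massage is needed.

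\textbf{The ``matching with $M_-$'' step is where the actual work hides, and you skip it.} After reducing to a manifold with cylindrical end, the kernel of $B_{\Phi_0}$ on $M_-\cup_N(N\times[0,\infty))$ depends on $M_-$ in a way that separation of variables alone does not control; solving an ODE on $[0,\infty)$ and ``matching'' at $t=0$ is not a computation one can carry out explicitly. The paper avoids this entirely by a \emph{second} application of the Relative Index Theorem (Lemma~\ref{L:cylindrical end}): one glues $M$ to a reflected copy $-M$ along the cylindrical end, so that the cut-and-paste produces a compact odd-dimensional manifold (whose $\tau$-index vanishes by Lemma~\ref{L:vanishing on compact}) and a full cylinder $N\times\RR$. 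The extra pieces have empty essential support and contribute $0$ by Lemma~\ref{L:essential support}. Only on the full cylinder does the ODE analysis of Section~\ref{SS:DAline} apply cleanly. Your outline has one cut-and-paste where the paper needs two.
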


We prove the theorem in the next section. We finish this section with a couple of immediate corollaries.

\begin{corollary}\label{C:Phi=-Phi}
$\ind_\tau{}B_\Phi= \ind_\tau{}\dirac_{N-}^+$. Hence, 
\begin{equation}\label{E:N+=-N-}
	\ind_\tau{}\dirac_{N+}^+\ = \ \ind_\tau{}\dirac_{N-}^-.
\end{equation}
\end{corollary}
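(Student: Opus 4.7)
Plan: Apply Theorem~\ref{T:computation of odd-dimensional case} to the modified configuration $(V^*,-\Phi)$ and use the odd symmetry of $B_\Phi$ to identify its $\tau$-index with that of $B_{-\Phi}(V^*)$.

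Step 1 (applying the theorem to $-\Phi$). The endomorphism $-\Phi$ is again admissible with the same essential support since $(-\Phi)^2=\Phi^2$ and $[c(\xi),-\Phi]=-[c(\xi),\Phi]=0$; moreover $V^*$ is odd symmetric because $\tau V^*\tau^{-1}=V$. To verify admissibility of the pair $(V^*,-\Phi)$ in the sense of Definition~\ref{D:admissible endomorphism}, first choose $K$ to be $\theta$-invariant (replacing it by $K\cup\theta K$ if needed). A direct computation gives $\tau\{D_V,F\}\tau^{-1}=\{D_{V^*},F\}$, and since $\tau$-conjugation of an endomorphism at $x$ coincides with $\theta^E$-conjugation of its value at $\theta x$, the required norm bound at $x\in M\setminus K$ reduces to the original bound at $\theta x\in M\setminus K$. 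Under this new configuration, the positive spectral subspace of $-\Phi$ equals $E_{N-}$; the grading $E_{N-}=E_{N-}^+\oplus E_{N-}^-$ (induced by $\alpha=-ic(n)$) is independent of $\Phi$, and the associated connection is precisely $\n^{E_{N-}}$. Theorem~\ref{T:computation of odd-dimensional case} therefore gives
\[
    \ind_\tau B_{-\Phi}(V^*)\ =\ \ind_\tau \dirac_{N-}^+.
\]

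Step 2 (the adjoint identity). Since $\dirac$ and $\Phi$ are self-adjoint, a direct computation yields
\[
    B_\Phi(V)^*\ =\ (\dirac+V+i\Phi)^*\ =\ \dirac+V^*-i\Phi\ =\ B_{-\Phi}(V^*).
\]
By odd symmetry, $\tau B_\Phi=B_\Phi^*\,\tau$, so $\tau$ restricts to an anti-linear bijection $\ker B_\Phi\to\ker B_\Phi^*=\ker B_{-\Phi}(V^*)$. Taking complex dimensions modulo~$2$,
\[
    \ind_\tau B_\Phi\ =\ \ind_\tau B_\Phi^*\ =\ \ind_\tau B_{-\Phi}(V^*)\ =\ \ind_\tau \dirac_{N-}^+,
\]
which is the first assertion of the corollary.

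Step 3 (the ``Hence''). Combining with Theorem~\ref{T:computation of odd-dimensional case} applied to the original $\Phi$ gives $\ind_\tau \dirac_{N+}^+=\ind_\tau \dirac_{N-}^+$. Since $\dirac_{N-}$ is a graded odd-symmetric self-adjoint operator on the compact manifold~$N$, the identity $\tau \dirac_{N-}^+=\dirac_{N-}^-\,\tau$ shows that $\tau$ restricts to an anti-linear bijection $\ker\dirac_{N-}^+\to\ker\dirac_{N-}^-$; hence $\ind_\tau\dirac_{N-}^+=\ind_\tau\dirac_{N-}^-$, and chaining yields~\eqref{E:N+=-N-}.

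The main obstacle is the admissibility check in Step~1: because $V$ is only odd symmetric, $V^*\ne V$ in general, so the anticommutators $\{D_V,F\}$ and $\{D_{V^*},F\}$ differ by a nonzero order-zero term. The bookkeeping needed to transfer the pointwise positivity bound from $(V,\Phi)$ at $\theta x$ to $(V^*,-\Phi)$ at $x$ relies essentially on $\tau$-equivariance together with the isometric character of $\theta^E$; once this is in place the remaining steps are a formal chain of identities.
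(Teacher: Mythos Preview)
Your proof is correct and follows the same route as the paper: apply Theorem~\ref{T:computation of odd-dimensional case} with the sign of $\Phi$ reversed, then use odd symmetry to identify the resulting kernel with $\ker B_\Phi^*$. The paper's own argument is shorter but looser: it writes $B_{-\Phi}=D_V^+-i\Phi=(D_V+i\Phi)^*=B_\Phi^*$, which is literally true only when $V=V^*$; you correctly replace this by working with the pair $(V^*,-\Phi)$, observe that $B_{-\Phi}(V^*)=B_\Phi^*$ exactly, and verify that the admissibility bound transfers via $\tau$-conjugation (using that $\theta^E$ is anti-unitary). Since the operator $\dirac_{N-}^+$ in the conclusion of the theorem depends only on the Clifford and connection data on $N$ and on the sign of $\Phi$, not on $V$, the right-hand side is unchanged and your argument goes through. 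You also spell out the passage from $\ind_\tau\dirac_{N-}^+$ to $\ind_\tau\dirac_{N-}^-$ via the anti-linear bijection $\tau:\ker\dirac_{N-}^+\to\ker\dirac_{N-}^-$, which the paper leaves implicit in its ``the Corollary follows''. In short, your proof is a more careful execution of the paper's idea; nothing is genuinely different, and the extra care in Step~1 actually patches a small imprecision in the printed proof.
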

\begin{proof}[Proof of Corollary \ref{C:Phi=-Phi}]
Applying Theorem~\ref{T:computation of odd-dimensional case} to the operator $B_{-\Phi}$ we get 
\begin{equation}\label{E:ind-Phi}
    \ind_\tau B_{-\Phi}\ = \ \ind_\tau{}\dirac_{N-}^+.
\end{equation}
Since $B_\Phi$ is odd symmetric we have 
\[
    B_{-\Phi}\ =\ D_V^+-i\Phi \ =\  (D_V+i\Phi)^*
    \ = \ B_\Phi^* \ = \ 
    \tau\,B_\Phi\,\tau^{-1}.
\]
Hence, $\ind_\tau B_\Phi= \ind_\tau B_{-\Phi}$ and the Corollary follows from \eqref{E:ind-Phi}. 
\end{proof}

\begin{remark}\label{R:Phi=-Phi}
At first glance, the bundles $E_{N+}$ and $E_{N-}$ are not related. However, both operators, $\dirac_{N+}$ and $\dirac_{N-}$, are induced by the same operator $D$. This implies that the direct sum $\dirac_{N+}\oplus{}\dirac_{N-}$ is {\em cobordant to 0} in the sense of \cite{Br-cob}, and the cobordism is given by the operator $D_V$. In fact, this equality \eqref{E:N+=-N-} is equivalent to the cobordism invariance of the index.
\end{remark}

If $\Phi$ is an admissible endomorphism of $E$ then so is $\lambda{}\Phi$ for all $\lambda>1$. As an immediate corollary of Theorem~\ref{T:computation of odd-dimensional case} we obtain the following

\begin{corollary}\label{C:lambdaPhi}
The index $\ind_\tau{}B_{\lambda \Phi}$ is independent of $\lambda\ge 1$. 
\end{corollary}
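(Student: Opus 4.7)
The plan is to deduce the corollary directly from the Callias-type theorem (Theorem~\ref{T:computation of odd-dimensional case}). The key observation is that the right-hand side of \eqref{E:Callias-type theorem for odd symmetric operators}, namely $\ind_\tau\dirac_{N+}^+$, is constructed purely from the positive spectral projection of $\Phi(x)$ at points $x\in N$, together with the Clifford action and the restricted Clifford connection on $E$. None of these data change when $\Phi$ is replaced by $\lambda\Phi$ for $\lambda>0$: the spectral projections of $\lambda\Phi(x)$ onto $(0,\infty)$ and $(-\infty,0)$ coincide with those of $\Phi(x)$, so the bundle decomposition $E_N=E_{N+}\oplus E_{N-}$, the grading induced by $\alpha=-ic(n)$, and the operator $\dirac_{N+}^+$ defined in \eqref{E:DN} are all identical for $\Phi$ and $\lambda\Phi$.

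First I would verify that $\lambda\Phi$ is itself an admissible potential for every $\lambda\ge 1$, so that $B_{\lambda\Phi}=D_V^++i\lambda\Phi$ is a legitimate ungraded Callias-type operator in the sense of Definition~\ref{D:ungradedCallias}. This is immediate from the admissibility inequality \eqref{E:ungraded admissible endomorphism}: outside the essential support $K$ of $\Phi$ we have $\Phi^2\ge\|[D_V^+,\Phi]\|+c$, and multiplying by $\lambda^2\ge\lambda$ yields
\[
    (\lambda\Phi)^2\ =\ \lambda^2\Phi^2\ \ge\ \lambda\,\|[D_V^+,\Phi]\|\ +\ \lambda^2 c\ =\ \|[D_V^+,\lambda\Phi]\|\ +\ \lambda^2 c,
\]
with the same essential support $K$. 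Condition~(i) of Definition~\ref{D:admissible endomorphism} is obvious since $[D_V^+,\lambda\Phi]=\lambda[D_V^+,\Phi]$.

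Next I would apply Theorem~\ref{T:computation of odd-dimensional case} to both $B_\Phi$ and $B_{\lambda\Phi}$. Since the boundary operator $\dirac_{N+}^+$ associated to $\lambda\Phi$ coincides with the one associated to $\Phi$ by the observation above, the theorem gives
\[
    \ind_\tau B_{\lambda\Phi}\ =\ \ind_\tau\dirac_{N+}^+\ =\ \ind_\tau B_\Phi
\]
for every $\lambda\ge 1$, which is the claim.

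There is no real obstacle here; the only point worth being careful about is the independence of the positive spectral projection on the scaling of $\Phi$, which holds because $\lambda>0$, and the verification of admissibility, which I treated above. An alternative route would be to use the homotopy invariance Theorem~\ref{T:homotopyindex}, noting that $\lambda\mapsto B_{\lambda\Phi}$ is a continuous path in $\calB_\tau(W^{1,2}(M,E),L^2(M,E))$ of Fredholm odd symmetric operators on $[1,\infty)$; this would work equally well, but since the corollary is advertised as an immediate consequence of the Callias-type theorem, the spectral-projection argument above is the more natural route.
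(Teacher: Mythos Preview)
Your main argument is correct and is exactly the paper's approach: the paper states the corollary as an immediate consequence of Theorem~\ref{T:computation of odd-dimensional case}, and your proof spells out precisely why --- admissibility of $\lambda\Phi$ for $\lambda\ge 1$, and invariance of the positive spectral subbundle $E_{N+}$ (hence of $\dirac_{N+}^+$) under positive scaling.

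One small caveat on your closing remark: the alternative homotopy route does \emph{not} in general ``work equally well''. As the paper notes in Remark~\ref{R:lambdaPhi}, when $\Phi$ is unbounded the domain of $B_{\lambda\Phi}$ depends on $\lambda$, so $\lambda\mapsto B_{\lambda\Phi}$ is not obviously a continuous path in a fixed $\calB_\tau(W,H)$, and Theorem~\ref{T:homotopyindex} does not apply directly. This is in fact the whole point of the corollary --- it extracts from the Callias-type theorem a conclusion that is not accessible by naive homotopy invariance.
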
 
\begin{remark}\label{R:lambdaPhi}
If the endomorphism $\Phi$ is bounded then the domain of the operator $B_{\lambda \Phi}$ is independent of $\lambda$ and the corollary follows directly from the stability of the $\tau$-index. However, if $\Phi$ is not bounded the domain of $B_{\lambda \Phi}$ depends on $\lambda$ and the corollary is not {\em a priori} obvious. 
\end{remark}

\section{The proof of Theorem~\ref{T:computation of odd-dimensional case}}\label{S:proof of computation of odd-dimensional case}

In this section, we prove Theorem~\ref{T:computation of odd-dimensional case}
\subsection{The plan of the proof of Theorem~\ref{T:computation of odd-dimensional case}.}\label{SSplanprCallias}
The rest of this section is devoted to the proof of Theorem~\ref{T:computation of odd-dimensional case}. The proof is essentially the same as in the classical case, cf. \cites{Anghel93Callias,Bunke95}. We mostly follow the exposition in \cite{BrCecchini17} and carefully check that all construction can be made $\tau$-equivariantly. We add one new step in Section~\ref{SS:deformation on neck}, which makes the other steps drastically easier.  Before giving all the details, let us briefly sketch the main steps of the proof. 

\subsubsection*{Step 1}
First, we consider the case when $M=N\times\RR$ is a cylinder and assume that all the structures on $M$ are products of the structures on $N$ and $\RR$. We set $M_-:= N\times(-\infty,0]$ and $M_+:=N\times[0,\infty)$ This is not exactly the situation of 
 Theorem~\ref{T:computation of odd-dimensional case} since both $M_+$ and $M_-$ are not compact. However, we still can construct the operator $\dirac_N$ as in Section~\ref{SS:Calliastheorem}.  Using the separation of variables we explicitly compute the kernel of $B_\Phi$ and show that \eqref{E:Callias-type theorem for odd symmetric operators} holds in this case. 

\subsubsection*{Step 2} We consider a manifold $M= M_-\cup_N (N\times[-1,\infty))$ with a cylindrical end and assume that the restrictions of all the structures on $M$ to the cylindrical part $N\times[-1,\infty)$ are the same as in the previous step. We apply the Relative Index Theorem~\ref{T:relative index} to prove that the $\tau$-index of the Callias type operator in this case is equal to the $\tau$-index on the cylinder (Figure~\ref{fig:Cylindrical} gives a good idea of how it is done). Hence, Theorem~\ref{T:computation of odd-dimensional case} holds in this case by Step~1.

\subsubsection*{Step 3} We consider the general manifold $M$. We identify a small neighborhood $U(N)$ of the hypersurface $N$  with the product $N\times(-1,1)$ and deform all the structures in $U(N)$ to that considered in Step 1. The $\tau$-index does not change under this deformation. 

\subsubsection*{Step 4} Finally, we apply the Relative index theorem to reduce the general case to the case of a manifold with a cylindrical end (see Figure~\ref{fig:general case}). 

The rest of this section is occupied with the details of this proof.

\subsection{Step 1. The case of a cylinder}\label{SS:model operator}
First, we consider the case when $M$ is a cylinder $N\times\RR$ and show that Theorem~\ref{T:computation of odd-dimensional case} holds in this case. 

Let $(N,\theta^N)$ be a {\em closed} involutive manifold of even dimension $n-1$. Let $(E_N=E^+_N\oplus E^-_N,\theta^{E_N})$ be a graded quaternionic Dirac bundle over $N$. Let $\nabla^{E_N}=\nabla^{E^+_N}\oplus\nabla^{E^-_N}$ denote the $\tau$-invariant Clifford  connection on $E_N$ and let $\dirac_N$
\[
	\dirac_N\ =\ \begin{pmatrix}0&\dirac_N^-
        \\ \dirac^+_N&0\end{pmatrix},
\]
be the Dirac operator associated with $\nabla^{E_N}$ cf. Section~\ref{SS:Dirac}. 

Define the involution on the cylinder $N\times\RR$ by $\theta(x,t):= (\theta^Nx,t)$.
Let $p:N\times\RR\rightarrow N$ be the projection onto the first factor and denote by $\widehat{E}_N$ the pull-back bundle $p^\ast E_N$. Then 
\begin{equation}\label{E:widehatEN}
	\widehat{E}_N\ = \ \widehat{E}_N^+\oplus \widehat{E}_N^-,
	\qquad\text{where}\quad \widehat{E}_N^\pm:= p^*E_N^\pm.
\end{equation}
We denote by $\n^{\widehat{E}_N},\ \widehat{c}_N,\ h^{\widehat{E}_N}$, ... the structures induced on  $N\times\RR$ by corresponding structures on $N$. More precisely, $h^{\widehat{E}_N}$ is just the pull-back of the Hermitian metric on $E_N$. The natural Clifford action $\widehat{c}_N$ on $\widehat{E}_N$  is given by:
\begin{equation}\label{E:clifford action}
	\widehat{c}(\xi,t)
	\ = \
	c(\xi)\ + \ \gamma t,\qquad  
	(\xi,t)\in T_{(x,r)} (N\oplus\RR)=
	    T_x N\oplus\RR,\qquad (x,r)\in N\times \RR,
\end{equation}
where $c$ is the Clifford action of $TN$ on $E_N$ and $\gamma\big|_{\widehat{E}_N^\pm}=\pm i$. Note that this action does not preserve the grading \eqref{E:widehatEN}. Endowed with the pull-back connection $\nabla^{\widehat{E}_N}$ induced by the connection on $E_N$, the bundle $\widehat{E}_N$ becomes an ungraded Dirac bundle. Let $\widehat{\dirac}_N$ denote the Dirac operator associated with the connection $\nabla^{\widehat{E}_N}$. With respect to the decomposition 
\begin{equation}\label{E:decomposition of sections over the cylinder}
L^2(N\times\RR,\widehat{E})=L^2(E_N)\otimes L^2(\RR).
\end{equation}
$\widehat{\dirac}_N$ has the form
\begin{equation}\label{E:hatD}
	\widehat{\dirac}_N\ :=\ \dirac_N\otimes1\ +\ \gamma\otimes\partial_r,
\end{equation}
where $\partial_r$ denotes the operator of the derivation in the axial direction of the cylinder $N\times \RR$.

The quaternionic structure $\theta^{E_N}$ on $E_N$ extends naturally to a quaternionic structure $\theta^{\widehat{E}_N}$ on $\widehat{E}_N$ and $\widehat{\dirac}_N$ is odd symmetric with respect to this structure. 

Let $h:\RR\rightarrow \mathbb{R}$ be a smooth function such that
\begin{equation}\label{E:function h}
	h(r)=\left\{
	  \begin{array}{rr}-1,\qquad &r<-1/2\\1\,,\qquad &r>1/2,
	  \end{array}
	 \right.
\end{equation}
By a slight abuse of notation, we will denote by 
$h$ also the induced function $h:N\times\RR\to[-1,1]$. Then $h$ is a $\tau$-invariant function on $N\times\RR$. The multiplication by  $h$ is an admissible odd symmetric endomorphism of the ungraded Dirac bundle $\widehat{E}_N$ (see Section~\ref{SS:CalliasDirac}).

We define the \emph{model operator} ${\bf M}: \Gamma(N\times\RR,\widehat{E}_N)\to \Gamma(N\times\RR,\widehat{E}_N)$ 
\begin{equation}\label{E:twisted Callias type operator}
	{\bf M}\ :=\ \left(\begin{array}{cc}0&{\bf M}_-\\{\bf M}_+&0\end{array}\right),
\end{equation}
where ${\bf M}_\pm:=\widehat{\dirac}_N\pm ih$. This is an odd symmetric Callias-type operator (indeed, $\tau\,(\widehat\dirac_N+i)\tau^{-1}= \widehat\dirac_N-i$ because $\tau$ is anti-linear). It is shown in \cite[Corollary~4.10]{BrCecchini17} that 
\begin{equation}\label{E:M=D}
    \dim\ker \mathbf{M}_\pm \ = \ \dim \ker\dirac_N^\pm.
\end{equation}
This means that Theorem~\ref{T:computation of odd-dimensional case} holds for the case of a cylinder. 

\subsection{Step 2. A manifold with a cylindrical end}\label{SS:cylindrical end}
Assume that $M_+= N\times[-1,\infty)$ for some compact manifold $N$. In other words, we assume that 
\begin{equation}\label{E:cylindrical end}
	M\ = \ M_-\cup_N \left(N\times[-1,\infty)\right),
\end{equation}
where  $M_-$ is a compact manifold with boundary, whose interior contains an essential support of the potential $\Phi$. 

As in \eqref{E:decomposition of Sigma_N}, the restriction $E_N$ of $E$ to $N\times\{0\}$ decomposes into a direct sum of $\tau$-invariant sub-bundles: $E_N= E_{N+}\oplus E_{N-}$. Let $\widehat{E}_{N\pm}$ be the corresponding quaternionic Dirac bundles on $N\times\RR$ defined in Section~\ref{SS:model operator}. 
We assume that there is a fixed $\tau$-invariant Dirac bundles isomorphism between the restriction of $E$ and of $\widehat{E}_{N_+}\oplus \widehat{E}_{N_-}$ to $N\times[-1,\infty)$. This means that the restrictions of the metrics, the connection, the Clifford action, and the potential $\Phi$ to $N\times[-1,\infty)$ are equal to the corresponding structures $\n^{\widehat{E}_{N+}\oplus\widehat{E}_{N_-}},\ \widehat{c}_N,\ h^{\widehat{E}_{N+}\oplus\widehat{E}_{N_-}}$ on the cylinder (cf. Section~\ref{SS:model operator}). We also assume that the restriction of potential $V$ of the operator \eqref{E:D+V Callias} to $N\times[-1,\infty)$ vanishes.  From now on we will identify the restriction of $E$ and $\widehat{E}_{N_+}\oplus \widehat{E}_{N_-}$ to the cylinder and write 
\begin{equation}\label{E:E=EN+=EN-}
    E|_{N\times[-1,\infty)}= \widehat{E}_{N_+}\oplus \widehat{E}_{N_-}.
\end{equation} 
Further, we assume that the restriction of the potential $\Phi$ to $N\times[-1,\infty)$ is equal to the grading operator corresponding to the decomposition \eqref{E:E=EN+=EN-}, i.e. that $\Phi|_{\widehat{E}_{N\pm}}= \pm1$. Finally, we denote by $\theta_N$ and $\theta^E_N$ the restrictions of $\theta$ and $\theta^E$ to $N$ (recall that $N$ is a $\theta$ invariant submanifold) and  assume that the restrictions $\widehat{\theta}_N,\ \widehat{\theta}^E_N$ of $\theta$ and $\theta^E$ to $N\times\RR$ are equal to $\theta_N\times1$  and $\theta_N^E\times1$ respectively.

\begin{lemma}\label{L:cylindrical end}
Under the above assumptions, 
\begin{equation}
	\ind_\tau B_\Phi\ =\ \ind_\tau \dirac_{N+}^+.
\end{equation}
\end{lemma}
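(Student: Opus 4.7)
The plan is to invoke the Relative Index Theorem~\ref{T:relative index} to compare $M$ with the model cylinder $N\times\RR$ of Step~1 (Section~\ref{SS:model operator}), exploiting the hypothesis that the bundle, Clifford action, connection and Dirac operator $D$ on the cylindrical end of $M$ coincide exactly with their cylindrical counterparts; only the Callias potentials differ in form (constant grading operator on $M$ versus the $h$-type model potential on $N\times\RR$).

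Concretely, set $M_0:=M$ with the graded Callias-type operator $\calB_F$ associated to $B_\Phi$, and set $M_1:=N\times\RR$ equipped with the bundle $\widehat E_{N+}\oplus\widehat E_{N-}$ and the Callias potential $\Phi_1:=h(r)\,\operatorname{diag}(1,-1)$, where $h$ is the model function~\eqref{E:function h}. The resulting operator $B_1$ then splits as a direct sum of two model operators from Section~\ref{SS:model operator}, whose $\tau$-indices are computable via~\eqref{E:M=D}. Fix $R>\tfrac12$ and choose separating hypersurfaces $N_0:=N\times\{R\}\subset M_0$ and $N_1:=N\times\{-R\}\subset M_1$, with the translation $\psi(x,r):=(x,r-2R)$ as the diffeomorphism of $\theta$-invariant tubular neighborhoods. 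Since the involution $\theta$ acts trivially on the axial direction (because $M_\pm$ are $\theta$-invariant), $\psi$ is $\tau$-equivariant, and translation-invariance of the cylindrical structures ensures that $D_0$ and $D_1$ are conjugated through $\psi$. Thus Assumption~\ref{A:cut and paste} is satisfied.

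Next, perform the cut-and-paste of the proof of Theorem~\ref{T:relative index} using $\theta$-invariant cut-off functions $\alpha_j,\beta_j$ to obtain manifolds $M_2,M_3$ with spliced Callias operators $B_2,B_3$. By the specific choice of $\Phi_1$, one of the cut-and-pasted manifolds, say $M_2$, inherits a spliced potential whose two halves have the \emph{same} constant sign $\operatorname{diag}(1,-1)$ outside the transition region, so its essential support is empty and Lemma~\ref{L:essential support} yields $\ind_\tau B_2=0$. On the other manifold $M_3$, the spliced potential transitions between opposite signs and is therefore homotopic, through odd symmetric Callias-type operators (Theorem~\ref{T:homotopyindex}), to a direct sum of model operators on $\widehat E_{N+}$ and $\widehat E_{N-}$, whose $\tau$-indices are given by Step~1. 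Plugging these into the Relative Index Theorem $\ind_\tau B_\Phi+\ind_\tau B_1=\ind_\tau B_2+\ind_\tau B_3$ and cancelling the common contributions in $\ZZ_2$ yields $\ind_\tau B_\Phi=\ind_\tau\dirac_{N+}^+$.

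The main obstacle is the $\tau$-equivariant bookkeeping at every step — the cut-off functions, the model potential $\Phi_1$, and the identification $\Psi$ of bundles on tubular neighborhoods must all be chosen compatibly with the anti-linear anti-involution. A secondary technical issue is to verify that the spliced potentials $F_2,F_3$ remain Callias-admissible in the transition zones in the sense of~\eqref{E:ungraded admissible endomorphism}; as in \cite{BrCecchini17}, this is handled by choosing $\alpha_j,\beta_j$ with sufficiently small $C^1$-norms (by extending the transition region if necessary) so that the commutator $[D,F_j]$ is dominated by $F_j^2$.
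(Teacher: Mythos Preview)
Your proposal has a genuine gap. The claim that ``one of the cut-and-pasted manifolds, say $M_2$, inherits a spliced potential whose two halves have the \emph{same} constant sign $\operatorname{diag}(1,-1)$ outside the transition region, so its essential support is empty'' is false. Trace the pieces: the compact region $M_-\subset M$, which contains the essential support of the original potential $\Phi$, must land in exactly one of $M_2$ or $M_3$. Whichever manifold receives $M_-$ is, outside a compact set, identical to the original cylindrical-end manifold $M$ with the same constant potential $\operatorname{diag}(1,-1)$ at infinity; by Lemma~\ref{L:compactperturbation} its $\tau$-index equals $\ind_\tau B_\Phi$ again. The other glued manifold is a cylinder $N\times\RR$ whose spliced potential agrees, outside a compact set, with your model potential $\Phi_1=h\operatorname{diag}(1,-1)$, so its $\tau$-index equals $\ind_\tau B_1$. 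The relative index identity thus reads $\ind_\tau B_\Phi+\ind_\tau B_1=\ind_\tau B_\Phi+\ind_\tau B_1$, which is vacuous.

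The paper escapes this circularity by a different choice of $M_1$: it takes $M_1=-M$, the orientation-reversed copy of $M$ with opposite Clifford action, equipped with a potential $\Phi_1$ whose essential support is empty (so $\ind_\tau B_1=0$ by Lemma~\ref{L:essential support}). The point of using $-M$ is that after cutting and pasting, the piece containing $M_-$ now also contains $-M_-$, and they glue to form a \emph{closed} odd-dimensional manifold $M_2\cong M_-\cup_N(N\times[-1,1])\cup_N(-M_-)$. On this manifold $\ind_\tau B_2=0$ by Lemma~\ref{L:vanishing on compact} (vanishing on compact odd-dimensional manifolds), not by the empty-essential-support argument. The remaining piece $M_3\cong N\times\RR$ carries a potential that on the $\widehat E_{N-}$ summand is identically $-1$ (empty essential support, index zero) and on $\widehat E_{N+}$ agrees with $h$ outside a compact set, so Step~1 gives $\ind_\tau B_3=\ind_\tau\dirac_{N+}^+$. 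This is the mechanism your argument is missing: a way to kill the index of the manifold inheriting $M_-$ without circling back to $\ind_\tau B_\Phi$ itself.
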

\begin{proof}
Let $-M$ be a copy of $M$ with the opposite orientation. We denote by $-E$ the bundle $E$ viewed as a vector bundle over $-M$, endowed with the {\em opposite}  Clifford action. This means that a vector $\xi\in TM$ acts on $-E$ by $c(-\xi)$  (for more details about this construction we refer to \cite[Section~2.3.2]{Bunke95} and \cite[Chapter~9]{BoosWoj93book}). We need the change of the Clifford action later to glue $-E$ and $E$ together. 

There is a natural orientation preserving isometry  
\[
	\psi:-M\ \overset{\sim}\longrightarrow \  (N\times(-\infty,1])\cup_N(-M_-),
\]
such that 
\[\begin{aligned}
	\psi(x)\ =\ x, \qquad &\text{if}\quad x\in -M_-\simeq M_-\\ 
 	\psi(y,t)\ =\ (y,2-t), \qquad &\text{if}\quad (y,t)\in N\times[-1,\infty).
\end{aligned}\]
To simplify the notation we will skip $\psi$ from the notation and simply write 
\begin{equation}\label{E:-W}
	-M\ =\  (N\times(-\infty,1])\cup_N(-M_-).
\end{equation}

Let $\Gamma_N: \widehat{E}_{N_+}\oplus \widehat{E}_{N_-}\to \widehat{E}_{N_+}\oplus \widehat{E}_{N_-}$ be the operator defined in \eqref{E:Gamma}. We denote by the same symbol $\Gamma_N$ the induced operators on $\widehat{E}_{N_+}\oplus \widehat{E}_{N_-}$ and $-E|_{{M\times((-\infty,1]}}$. Then $\Gamma_N$ anti-commutes with $c(\xi)$ for $\xi$ tangent to $N$ and commutes with $c\big(\frac{\p}{\p t}\big)$. Hence, it defines an isomorphism of quaternionic Clifford bundles
\begin{equation}\label{E:-E restricted to cylinder}
	\Gamma_N:\,-E\big|_{N\times(-\infty,1]}\ \simeq\ 
	\widehat{E}_{N_+}\oplus \widehat{E}_{N_-}\big|_{N\times(-\infty,1]}.
\end{equation}

 Let $\Phi_1$ be the potential on $-E$ whose restriction to $N\times((-\infty,1]$ is equal to
\[
    \Phi_1\ = \ 
    \begin{pmatrix}
        h&0\\0&-1
    \end{pmatrix}
\]
with respect to decomposition \eqref{E:E=EN+=EN-} and which is equal to $\Phi$ on $-M_-\simeq M_-$. Here $h$ is given by \eqref{E:function h}. This is a smooth self-adjoint $\tau$-invariant bundle map. For $t>1/2$ this map is the constant matrix $\left(\begin{smallmatrix}1&0\\0&-1\end{smallmatrix}\right)$.

We identify the restriction of  both bundles $E$ and $-E$ to $N\times[-1,1]$  with the restriction of $\widehat{E}_N$ to  $N\times[-1,1]$. So we have a fixed isomorphism between them. Under this isomorphism the restrictions of $\Phi$ and $\Phi_1$  to $[1/2,1]$ are equal. Hence, we can apply the relative index theorem with $M_0:=M$, $M_1:= -M$,  $U(N):= N\times(1/2,1)$, and we cut along the surface $N\times\{3/4\}\simeq N$.  Then, $M_2\simeq M_-\cup_N \big(N\times[-1,1]\big)\cup_N (-M_-)$ and $M_3\simeq N\times \RR$, cf. Figure~\ref{fig:Cylindrical}.

\begin{figure}[ht]
    \centering
    \vskip-2cm
    \includegraphics[width=1\textwidth]{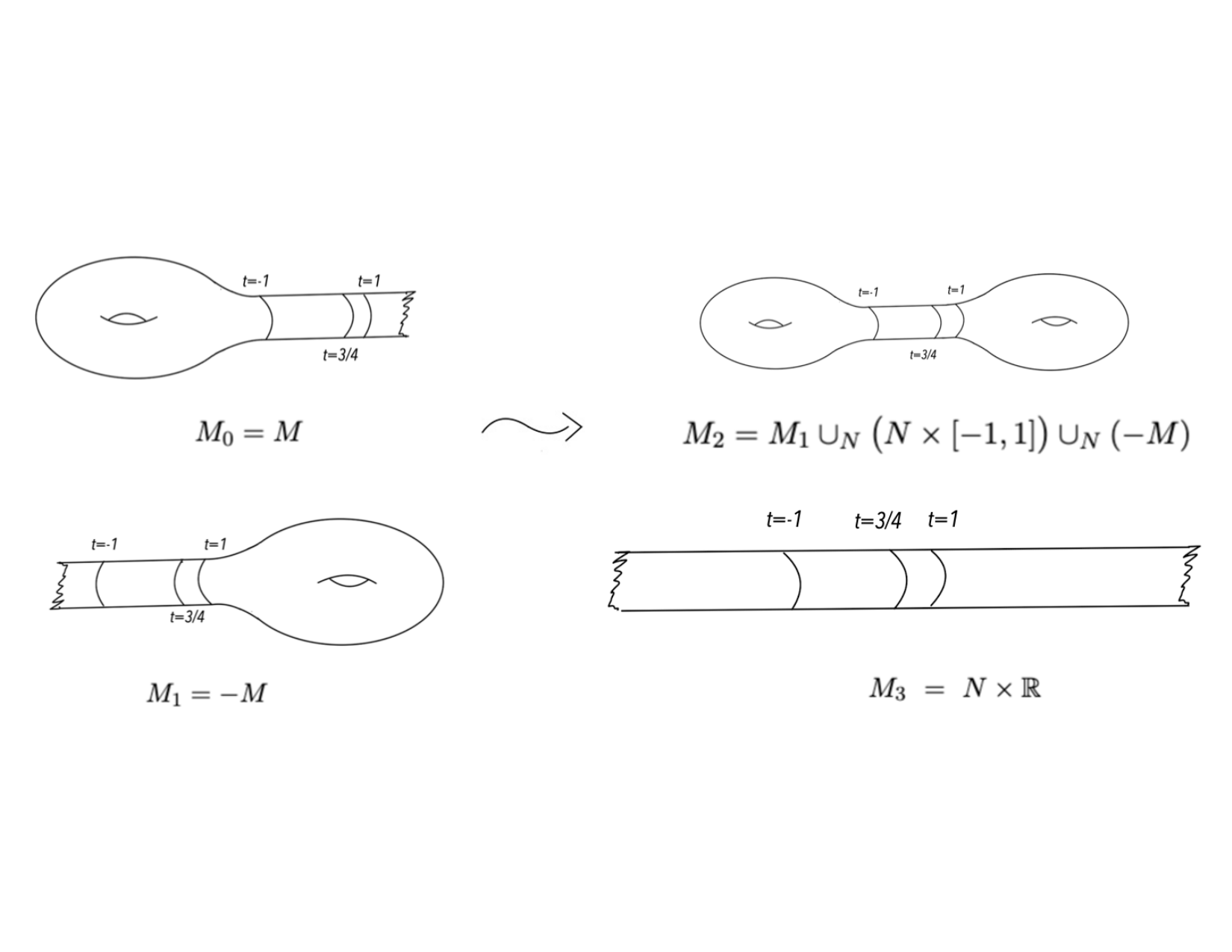}
    \vskip-2.5cm\caption{Cutting and pasting a manifold with a cylindrical end.}
    \label{fig:Cylindrical}
\end{figure}
Let $B_0,\ldots,B_3$ be the ungraded Callias-type operators on these manifolds. 

Consider a new potential $\widetilde{\Phi}_1:= -\ID$ on $M_1$ and let $\widetilde{B}_1$ be the corresponding Callias-type operator. The restrictions of $\widetilde{\Phi}_1$ and $\Phi_1$ to the part of the cylinder with $t<-1/2$ coincide. Hence the difference $\widetilde{\Phi}_1-\Phi_1$ has compact support and $\ind_\tau \widetilde{B}_1= \ind_\tau B_1$ by Lemma~\ref{L:compactperturbation}. Since the essential support of $\widetilde{\Phi}_1$ is empty, $\ind_\tau \widetilde{B}_1=0$ by Lemma~\ref{L:essential support}. Thus 
\begin{equation}\label{E:indB1}
        \ind_\tau B_1\ = \ 0.
\end{equation}

Since $M_2$ is a compact manifold of odd dimension, 
\begin{equation}\label{E:indB2}
        \ind_\tau B_2\ = \ 0,
\end{equation}
by Lemma~\ref{L:vanishing on compact}. From the relative index Theorem~\ref{T:relative index}, \eqref{E:indB1}, and \eqref{E:indB2} we conclude that 
\begin{equation}\label{E:indBF=indB3}
       \ind_\tau B_F\ := \ 
       \ind_\tau B_0\ = \ \ind_\tau B_3.
\end{equation}
The operator $B_3$ on the cylinder $M_3\simeq N\times \RR$ is a direct sum of operators $B_3|_{\widehat{E}_{N_+}}$ and $B_3|_{\widehat{E}_{N_-}}$.
\[
    \Phi_3\ = \ \begin{pmatrix}
        h&0\\0&-1
    \end{pmatrix}
\]
for all $t\not\in (-1,1)$. Hence, the restriction of  $\Phi_3$ to $\widehat{E}_{N_-}$ is -1. Thus the essential support of this restriction is empty and 
\begin{equation}\label{E:B3EN-=0}
    \ind_\tau B_3^+|_{\widehat{E}_{N_-}}\ = \ 0
\end{equation}
by Lemma~\ref{L:essential support}.

The restriction of $\Phi_3$ to $\widehat{E}_{N_+}$ coincides with $h$ outside of a compact set $N\times(-1,1)$. Thus, by Lemma~\ref{L:compactperturbation},  
\begin{equation}\label{E:B3EN+=inddirac}
    \ind_\tau B_3|_{\widehat{E}_{N_+}}\ = \ \ind_\tau \textbf{M}_+\ = \ \ind_\tau \dirac_{N+}^+,
\end{equation}
where in the last equality we used \eqref{E:M=D}.

Combining \eqref{E:indBF=indB3}, \eqref{E:B3EN-=0}, and \eqref{E:B3EN+=inddirac}, we obtain \eqref{E:cylindrical end}.
\end{proof}

\subsection{Step 3. Deformation of the operator on a neighborhood of $N$}\label{SS:deformation on neck}
Let us fix a $\tau$-equivariant identification between  a neighborhood $U(N)$ of $N$ in $M$ and  the product $U\simeq N\times(-1,1)$.   In this step, we deform all the structures on $U(N)$ to product structures so that the index of $B_\Phi$ does not change under this deformation. Then, it is enough to prove the theorem for the case when all the structures are products near $N$. We prove this in the next Subsection using a cutting and pasting method similar to the one used in Subsection~\ref{SS:cylindrical end}.

Let $\alpha_s:[-1,1]\to [-1,1]$ ($s\in[0,1]$)be a smooth family of smooth maps (by this we mean that the map $(s,t)\mapsto \alpha_s(t)$ is smooth) such that 
\begin{enumerate}
    \item $\alpha_0(t)=t$ for all $t\in [-1,1]$;
    \item $\alpha_s(1)= 1$ and $\alpha_s(-1)=-1$ for all $s\in [0,1]$;
    \item $\alpha_1(t)=0$ for $t\in [-1/2,1/2]$.
\end{enumerate}

Let $\beta_s:M\to M$ be the map whose restriction to $M\setminus U(N)$ is equal to the identity map and whose restriction to $U(N)\simeq N\times(-1,1)$ is given by  $\beta_s(x,t):= \big(x,\alpha_s(t)\big)$. Then $\beta_s$ is a family of smooth $\tau$-equivariant maps. 

We denote by $g^{TM}_s, h^E_s, \n^E_s,c_s(\xi),\Phi_s, ...$ the pull-backs of corresponding structures by the map $\beta_s$.  When $s=1$ these structures are product on $N\times[-1/2,1/2]\subset U(N)$. Let $B_{\Phi,s}$ be the Callias-type operator corresponding to these structures. Then it is an odd symmetric self-adjoint operator. Since the family of operators $B_{\Phi,s}$ ($s\in[0,1]$) is constant outside of the compact set $U(N)$, all these operators have the same domain. Hence, $B_{\Phi,s}$ is a continuous family of bounded operators from the Sobolev space $W^{1,2}(M,E\oplus E)$ to the space of square-integrable sections. Hence, by the stability of the index (Theorem~\ref{T:homotopyindex}),
$\ind_\tau B_{\Phi,s}^+$ is independent of $s$. Hence, it is enough to prove Theorem~\ref{T:computation of odd-dimensional case} for the operator $B_{\Phi,1}$, which corresponds to product structures near $N$.

\subsection{Step 4. Proof of Theorem~\ref{T:computation of odd-dimensional case} in the general case} \label{SS:general case}
We now assume that all the structures are product near $N$ as in Section~\ref{SS:deformation on neck}.  Let $\chi:\RR\to [0,1]$ be a smooth function such that 
\[
    \chi(t) \ = \ 
    \begin{cases}
        1 \quad&\text{for}\quad |t|\ge 1/2;\\
        0 \quad&\text{for}\quad |t|\le 1/3.
    \end{cases}
\]
Define a function $r:M\to [0,1]$ by  
\[
    \begin{cases}
        r(x)=1 \quad&\text{for}\quad x\in M\setminus U(N);\\
        r(y,t)=\chi(t) 
        \quad&\text{for}\quad (y,t)\in N\times(-1,1)\simeq U(N).
    \end{cases}
\] 
Then $r$ is a smooth  $\tau$-invariant function.

Let $\Phi(x)= U(x)|\Phi(x)|$  be the polar decomposition of the self-adjoint map $\Phi$(x).  Here $U(x)$ is a unitary  operator and $|\Phi(x)|$ is a non-negative operator.  The operator $|\Phi|$ is odd symmetric. Define a new potential 
\[
    \Phi'(x)\ := \ 
    \Phi(x)\, \Big(\,r(x)+\big(1-r(x)\big)\,|\Phi(x)|,\Big)^{-1}.
\]
Then $\Phi'(x)$ is equal to $\Phi(x)$ for $x\in M\setminus U(N)$. It follows that the essential support of $\Phi'$ is contained in $M_-$. Let $B_{\Phi'}$ denote the Callias-type operator corresponding to $\Phi'$. Then  
\begin{equation}\label{E:Phi=Phi'}
    \ind_\tau B_{\Phi'} \ = \ \ind_\tau B_\Phi
\end{equation}
by Lemma~\ref{L:essential support}.

The restriction of $\Phi'$ to $N\times[-1/3,1/3]\subset U(N)$  is equal to the grading operator corresponding to the decomposition \eqref{E:E=EN+=EN-}, i.e. to the potential on the manifold $M_-\cup_N \left(N\times[-1,\infty)\right)$ from Section~\ref{SS:cylindrical end}. 

Let $M_1:= N\times\RR$ be the cylinder. Consider the Dirac bundle $\widehat{E}_N= \widehat{E}_{N+}\oplus \widehat{E}_{N-}$ over $M_1$, cf. Section~\ref{SS:cylindrical end}.  Let $\Phi_1$ be the grading operator on $\widehat{E}_N$. Then all the data on $M$ and on $M_1$ coincide on $N\times[-1/3,1/3]$. Hence, we can apply the relative index theorem~\ref{T:relative index} to the pair $M_0:=M$ and $M_1$ where we cut along the manifold $N=N\times\{0\}$.  Then $M_2= M_-\cup_N \big(N\times[0,\infty)\big)$ and $M_3= \big(N\times(-\infty,0]\big)\cup_N M_+$, cf. Figure~\ref{fig:general case}.

\

\begin{figure}[ht]
    \centering \vskip-1cm
    \includegraphics[width=0.8\textwidth]{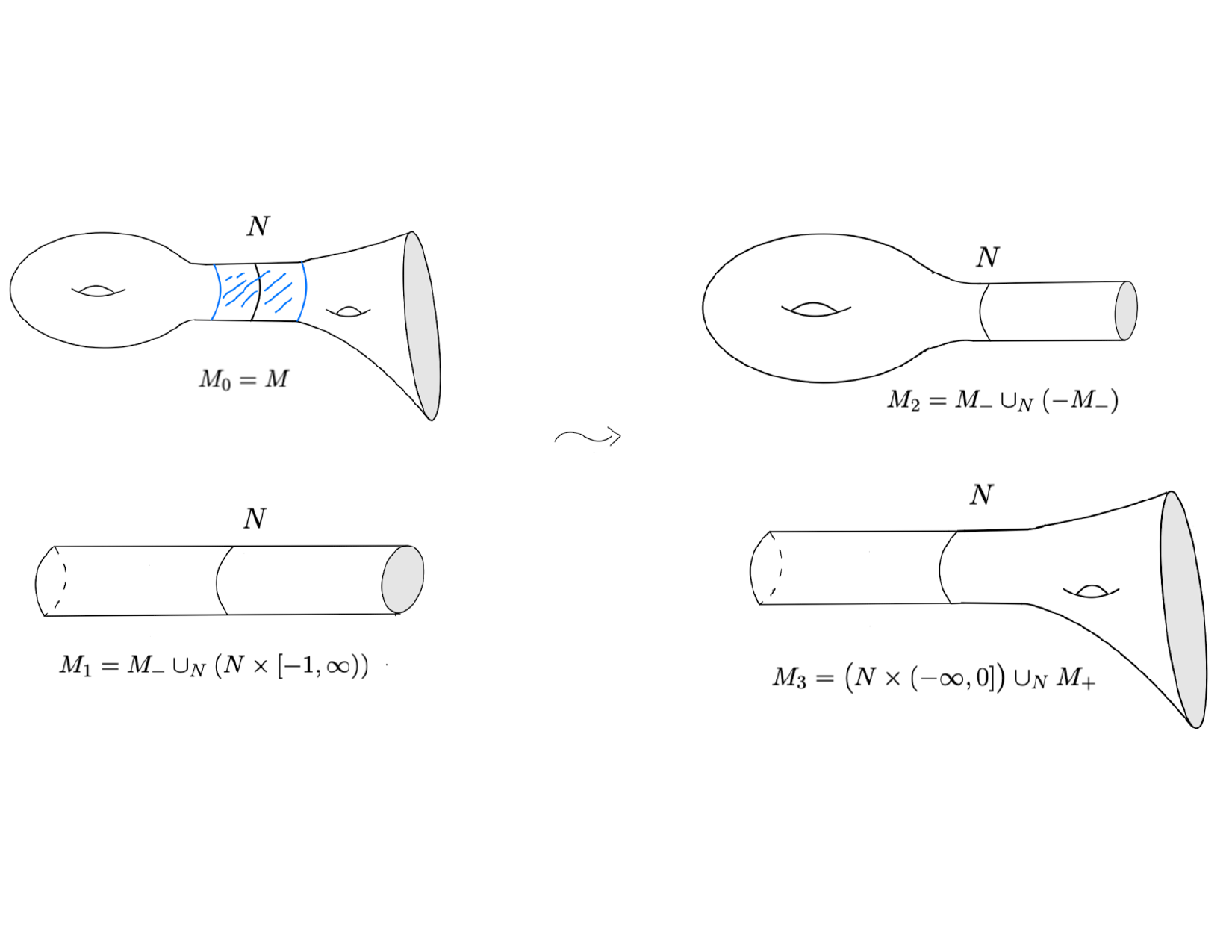}
    \vskip-2cm\caption{Cutting and pasting in the general case.}
    \label{fig:general case}
\end{figure}

Let $B_1$ denote the Callias-type operator on $M_1=N\times\RR$ corresponding to the potential $\Phi_1$. Since the essential support of $B_1$ is empty, $\ind_\tau B_1=0$ by Lemma~\ref{L:essential support}. The potential $\Phi_3$ which we obtain on $M_3= \big(N\times(-\infty,0]\big)\cup_N M_+$ also has empty essential support and the $\tau$-index of the corresponding Callias-type operator $B_3$ vanishes. Thus the Relative Index Theorem~\ref{T:relative index} implies that 
\[
    \ind_\tau B_{\Phi'}\ = \ \ind_\tau B_2,
\]
where $B_2$ is the Callias-type operator on $M_2= M_-\cup_N \big(N\times[0,\infty)\big)$. This is exactly the operator we studied in Section~\ref{SS:cylindrical end}. Theorem~\ref{T:computation of odd-dimensional case} follows now from Lemma~\ref{L:cylindrical end}.
\hfill$\square$

\section{Cobordism invariance of the $\tau$-index}\label{S:cobordism}

In this section, we show that the Callias-type Theorem~\ref{T:computation of odd-dimensional case} implies that the $\tau$-index of a Dirac-type operator on a compact even-dimensional manifold is preserved by a cobordism. To simplify the notation we only consider the case of {\em null-cobordism}, i.e. the cobordism between a manifold and the empty set. We prove that the $\tau$-index of a null-cobordant operator vanishes. A standard argument, cf., for example, \cite[Remark~2.10]{BrShi16}, shows that this is equivalent to the cobordism invariance of the index. 

A lot of formulations and constructions of this section are borrowed from \cite{BrShi16} and \cite[\S7]{BrCecchini17}.

\subsection{The settings}\label{SS:cobordismsettings}
Let $(M,\theta)$ be a closed involutive Riemannian manifold. Let $(E=E^+\oplus E^-,\theta^E)$ be a quaternionic Dirac bundle over $M$, cf. Section~\ref{SS:Dirac}. Recall that we assume that $\theta^E$ and $c(\xi)$ ($\xi\in TM$) are odd operators with respect to the grading. 

Let $\dirac$ be the corresponding generalized Dirac operator \eqref{E:Dirac}. Let $V=\left(\begin{smallmatrix}
    0&V^-\\V^+&0
\end{smallmatrix}\right)$ be a graded potential and consider the Dirac-type operator $D_V$, cf. \eqref{E:Diractype}. We assume that both operators,  $\dirac$ and $V$, are odd symmetric. Then the Dirac-type operator 
\[
    D_V \ := 
    \begin{pmatrix}
        0&\dirac^-+V^-\\
        \dirac^++V^+&0
    \end{pmatrix},
\]
cf. Section~\ref{SS:oddpotential}).

\newcommand{\oE}{\overline{E}}
\newcommand{\oD}{\overline{D}}
\newcommand{\oV}{\overline{V}}
\newcommand{\oPhi}{\overline{\Phi}}
\begin{definition}\label{D:cobordismSigma}

A  {\em quaternionic null-cobordism} of $D_V$ is a pair $(W,\oE)$, where 
\begin{enumerate}
\item $(W,\theta^W)$ is a compact involutive manifold with boundary $\p{W}\simeq M$ and the restriction of $\theta^W$ to $M$ coincides with $\theta^M$;
\item $(\oE,\theta^{\oE})$ is a quaternionic Dirac bundle over $W$ such that the corresponding generalized  Dirac operator $\overline{\dirac}$ on $\oE$ is odd symmetric. 
\item 
There is an open neighborhood $U$ of\/ $\p{W}$ and a $\tau$-invariant metric-preserving diffeomorphism
\begin{equation}\label{E:diffeo}
    \phi: M\times(-\epsilon,0] \ \to \ U.
\end{equation}
Here $\tau$ acts on the first factor of $M\times (-\epsilon,0]$.
\item 
Let $p:M\times \RR\to M$ denote the natural projection and let  $\widehat{E}= p^*E\simeq E\times\RR$ denote the lift of $E$ to $M\times\RR$. Let $t$ denote the coordinate on $\RR$. Then the bundle $\widehat{E}\oplus\widehat{E}$ has a natural structure of a quaternionic Dirac bundle over $M\times\RR$ with 
\[
    c\big(\frac{\p}{\p t}\big) \ = \ \begin{pmatrix}
        1&0\\0&-1
    \end{pmatrix}.
\]    
We assume that the restriction of $\oE$ to $M\times(-\epsilon,0]\subset U$ is $\tau$-equivariantly  isomorphic to the Dirac bundle $\widehat{E}$, defined in Section~\ref{SS:cylindrical end}.
\end{enumerate}  
\end{definition}

Let $\oV:\oE\to \oE$ be an odd symmetric potential, whose restriction to $U\simeq M\times (-\epsilon,0]$ is equal to the natural lift of $V$ to $E\times\RR$. Then the (ungraded) Dirac-type operator $\oD_{\oV}:= \overline{\dirac}+\oV$ is odd symmetric and its restriction to $U\simeq M\times (-\epsilon,0]$ has the form
\begin{equation}
    \widehat{D}\ := \ 
	\begin{pmatrix}
	i\frac{\partial}{\partial t}&D_V^-\\D_V^+&-i\frac{\partial}{\partial t}
	\end{pmatrix}.
\end{equation}

\begin{theorem}\label{T:compact cobordism}
If $D$ is a qaternionically null-cobordant odd symmetric Dirac-type operator over an {\em even-dimensional} manifold $M$, then \(\ind_\tau{}D=0.\)
\end{theorem}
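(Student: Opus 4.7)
The plan is to realize $\ind_\tau D_V^+$ as the $\tau$-index of a Callias-type operator on the complete manifold obtained from $W$ by attaching a cylindrical end, and to observe that this Callias index vanishes by applying Theorem~\ref{T:computation of odd-dimensional case} twice---once to the Callias operator itself and once with the sign of the potential flipped---exploiting the fact that if a scalar Callias potential is positive at infinity, then the spectral subbundle on which the potential is negative is the zero bundle, forcing the boundary Dirac operator on that side to be trivial.

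The setup I have in mind is: set $\tilde W:=W\cup_M(M\times[0,\infty))$, a complete involutive odd-dimensional manifold, and use item~(4) of Definition~\ref{D:cobordismSigma} to extend $\bar E$, $\bar\dirac$, and $\bar V$ by the product structure along the cylinder to a quaternionic Dirac bundle $\tilde E$, a generalised Dirac operator $\tilde\dirac$, and an odd symmetric potential $\tilde V$ on $\tilde W$. Choose a smooth function $\phi\colon\tilde W\to[0,1]$ with $\phi\equiv 0$ on $W$ and $\phi(x,t)=1$ for $t\ge 1$; such a $\phi$ is automatically $\tau$-invariant since it depends only on the cylinder coordinate $t$ and $\theta$ preserves $t$. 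Setting $\tilde D_{\tilde V}:=\tilde\dirac+\tilde V$, both $\tilde B:=\tilde D_{\tilde V}+i\phi$ and $B_{-\phi}:=\tilde D_{\tilde V}-i\phi$ are ungraded Callias-type operators of Dirac type (Definition~\ref{D:ungradedCallias}) with essential support $K:=W\cup(M\times[0,1])$: the commutators $[\tilde D_{\tilde V},\pm\phi]=\pm c(\p_t)\phi'(t)$ are bundle maps supported in $K$, and outside $K$ one has $(\pm\phi)^2\equiv 1$.

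Next, I apply Theorem~\ref{T:computation of odd-dimensional case} to $\tilde B$ with $M_-:=W\cup(M\times[0,2])$ and $N:=M\times\{2\}\simeq M$. At $N$ the potential is $\phi|_N=1>0$, so the positive spectral subbundle $E_{N+}$ is all of $\tilde E|_N\cong E$ and $E_{N-}$ is the zero bundle. The outward normal into $M_+$ is $\p_t$, on which $c(\p_t)=\pm i$ on $E^\pm$, so $\alpha=-ic(\p_t)$ reproduces the original $\ZZ_2$-grading of $E$ and gives $E_{N+}^{\pm}=E^\pm$. The inherited Clifford connection on $E_{N+}$ coincides with the connection on $E$ (nothing to project since $E_{N+}=\tilde E|_N$), so $\dirac_{N+}^+=\dirac^+$; then Lemma~\ref{L:compactperturbation} applied to $V^+$ on the compact manifold $M$ yields $\ind_\tau\tilde B=\ind_\tau\dirac^+=\ind_\tau D_V^+$. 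Running the same argument for $B_{-\phi}$: at $N$ the potential is $-\phi|_N=-1<0$, so the positive spectral subbundle for $-\phi$ is the zero bundle, the induced Dirac operator is the zero operator on the zero space of sections, and $\ind_\tau B_{-\phi}=0$.

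To close the argument, the short computation at the start of the proof of Corollary~\ref{C:Phi=-Phi} shows that $B_{-\phi}=\tau\tilde B\tau^{-1}$ (using that $\phi$ is $\tau$-invariant, $\tau$ is anti-linear, and $\tilde D_{\tilde V}$ is odd symmetric and self-adjoint), so $\tau$ is an anti-linear bijection $\ker\tilde B\to\ker B_{-\phi}$ and $\ind_\tau\tilde B=\ind_\tau B_{-\phi}=0$. Combining with the previous paragraph gives $\ind_\tau D_V^+=0$, as required. The main technical point I expect to have to check carefully is the $\tau$-equivariance of the entire construction---that $\tilde V$, $\tilde\dirac$, and the identification $(\tilde E|_N,\dirac_{N+}^+)\cong(E,\dirac^+)$ all intertwine correctly with the anti-involution induced by $\theta^W$ together with the product involution on the cylinder---but this reduces to routine bookkeeping once Definition~\ref{D:cobordismSigma} is unpacked.
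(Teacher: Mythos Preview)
Your proof is correct and follows essentially the same route as the paper: attach a cylindrical end to $W$, build an ungraded Callias-type operator with a scalar potential, apply Theorem~\ref{T:computation of odd-dimensional case} with the potential and with its negative, and then invoke the argument of Corollary~\ref{C:Phi=-Phi} to equate the two $\tau$-indices. The only cosmetic difference is that the paper takes the constant potential $\Phi'=\pm\ID$ (which already has empty essential support) in place of your cutoff $\pm\phi$; this changes nothing in the logic.
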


\newcommand{\tW}{\widetilde{W}}
\newcommand{\tSigma}{\widetilde{E}}
\begin{proof}
Consider the manifold $W':= W\cup_M(M\times[0,\infty))$. It carries a natural involution $\theta^{W'}$, whose restriction to $W$ is equal to $\theta^W$ and whose restriction to $M\times[0,\infty)$ is equal to $\theta^M\times1$.  

Let $E'$ denote the Dirac bundle over $W'$, whose restriction to $W$ is equal to $\oE$ and whose restriction to the cylinder $M\times[0,\infty)$ is equal to the bundle $\widehat{E}$. Let $V'$ be the potential, whose restriction to $W$ is equal to $\oV$ and whose restriction to  $M\times[0,\infty)$ is given by $V'(x,t)= V(x)$.  Let $D'$ be the Dirac operator whose restriction to $W$ is equal to $\oD$ and whose restriction to $M\times[0,\infty)$ is equal to  $\widehat{\dirac}+V'$ (recall that the operator $\widehat{\dirac}$ is defined in Section~\ref{SS:cylindrical end}). Then $D'$ is an ungraded Dirac-type operator, cf. Section~\ref{SS:CalliasDirac}.  

For an odd symmetric self-adjoint bundle map  $\Phi':E'\to E'$,  we denote by  $B'_{\Phi'}= D'+i\Phi$ the ungraded Callias-type operator on $W'$, cf. Section~\ref{SS:CalliasDirac}. 
Applying the Callias-type index theorem~\ref{T:computation of odd-dimensional case} to the operators $B'_{\ID}$ and $B'_{-\ID}$ we obtain
\[
	\ind_\tau B'_{\ID}\ = \ \ind_\tau D, \qquad 
	\ind_\tau B'_{-\ID}\ = \ 0.
\]
The proposition follows now from Corollary~\ref{C:Phi=-Phi}.
\end{proof}

\section{A $\ZZ_2$-valued analog of the Boutet de Monvel index theorem}\label{S:Boutet de Monvel}

Boutet de Monvel, \cite{BoutetdeMonvel78}, discovered an index theorem for a Toeplitz operator on a pseudo-convex domain. Guentner and Higson, \cite{GuentnerHigson96}, reinterpreted it in terms of the index of a Callias-type operator. Bunke, \cite{Bunke00}, generalized their result to Callias-type operators on general manifolds. In this section, we present a $\ZZ_2$-valued analog of the result of Bunke. The proof is practically the same, but easier, than in \cite{Bunke00}. We present it here for completeness.

\subsection{The gap condition}\label{SS:gap condition}
Let $M$ be a complete even-dimensional Riemannian manifold, and let $E=E^+\oplus E^-$ be a graded Dirac bundle over $M$, cf. Section~\ref{SS:Dirac}. 
Let $\dirac$ be the associated generalized Dirac operator on $E$.   

We make the following fundamental 
\begin{assumption}[\textbf{Gap condition}]\label{A:gap}
Zero is an isolated point of the spectrum of $\dirac$.
\end{assumption}
\begin{remark}\label{R:pseudoconvex}
Let $M\subset \CC^n$ be a strongly pseudo-convex domain endowed with the Bergman metric,  cf. \cite[\S7]{Stein72book}. Then $M$ is a complete Riemannian manifold. Let $\dirac = \bar{\p}+\bar{\p}^*$ be the Dolbeault-Dirac operator on the space $\Omega^{n,*}(M,\CC^k)$ of $(n,*)$-forms on $M$ with values in the trivial bundle $\CC^k$. 

It follows from  \cite[\S5]{DonnellyFefferman83} that $\dirac$ satisfies the gap condition. This relates the results of this section to the original theorem of Boutet de Monvel. 
\end{remark}
Fix an integer $k$ and consider the graded  bundle 
\[
    E\otimes\CC^k\ = \ \big(E^+\otimes\CC^k\big)\oplus \big(E^-\otimes\CC^k\big).
\]
over $M$. The structure of a graded Dirac bundle on $E$ naturally extends to $E\times\CC^k$ and the corresponding generalized Dirac operator 
\[
    D\ := \ \dirac\otimes 1:\, \Gamma(M,E\otimes \CC^k)
    \ \to \ \Gamma(M,E\otimes \CC^k) 
\]
satisfies the gap condition~\ref{A:gap}.

\subsection{The quaternionic structure}\label{SS:quaternionic_on_ExCk}
Let $\theta:M\to M$ be a metric preserving involution. Let 
\[
    \theta^E:\,E\ \to \ \theta^*E
\]
be an anti-linear map that preserves the Hermitian metric and such that $(\theta^E)^2=1$. Thus it is an anti-unitary involution. Note that, contrary to other parts of the paper, we consider an involution, not an anti-involution and we don't impose any conditions on its grading degree. In particular, in Example~\ref{E:example_for_Toeplitz} it will be even with respect to the grading. 

Let $\theta^{\CC^k}:\CC^k\to \CC^k$ be an anti-unitary anti-involution (By Lemma~\ref{L:even} it exists only if $k$ is even).  Consider an anti-unitary anti-involution $\theta^{E\otimes\CC^k}:= \theta^E\otimes\theta^{\CC^k}$. It makes $E\times\CC^k$  a graded quaternionic bundle. 

\subsection{An example}\label{E:example_for_Toeplitz}
An important example of this situation appears as follows. Let $M$  be a pseudo-convex domain in $\CC^n$ and let $\theta:M\to M$ be an anti-holomorphic involution. As in Remark~\ref{R:pseudoconvex} we endow $M$ with the Bergman metric and consider the bundle $E= \Lambda^{n,*}T^*M$ of $(n,*)$-forms on $M$. We define an anti-linear \textit{involution} on $E$ by 
\[
    \theta^E:\,\omega\ \mapsto \ \overline{\theta^*\omega}.
\]
Here bar stands for the complex conjugation and $\theta^*$ is the pull-back of differential forms, which sends $(n,*)$-forms to $(*,n)$-forms (since $\theta$ is anti-linear). 

One readily checks that the Dolbeault-Dirac operator $D=\dirac\otimes1$ is odd-symmetric with respect to the  anti-unitary anti-involution $\theta^{E\otimes\CC^k}:= \theta^E\otimes\theta^{\CC^k}$.

If $M$ is a unit disc in $\CC$, this example is related to the Graf-Porta model for topological insulators with a times reversal symmetry \cite{GrafPorta13}. See \cite{BrSaeedi24a} for details. 

\subsection{Matrix valued functions}\label{SS:matrix valued}
Let $Mat(k)$ denote the space of complex $k\times k$-matrices.  Let $BC(M,k)$ denote the Banach algebra of continuous bounded functions $f(x)$ on $M$ with values in $Mat(k)$. We define an anti-unitary anti-involution $\tau$ on $BC(M,k)$ by 
\begin{equation}\label{E:theta inv matrices}
    (\tau f)(x)\ = \ \theta^{\CC^k}\,f(\theta x)\, (\theta^{\CC^k})^{-1}.
\end{equation}
Let $BC(M,k)^\tau$ denote the set of $\tau$-invariant elements of $BC(M,k)$. Let $C_{0}(M,k)^{\tau}$ denote the closure of the subalgebra of functions with compact support in $BC(M,k)^{\tau}$. 

Let $C_g^\infty(M,k)^{\tau}\subset BC(M,k)^{\tau}$ denote the space of smooth $\tau$-invariant functions with values in $Mat(k)$ which are bounded and such that $df$ vanishes at infinity.  Then $C_0(M,k)^{\tau}\subset C_g(M,k)^{\tau}$. 

Let $C_g(M,k)^{\tau}$ denote the closure of $C_g^\infty(M,k)^{\tau}$ inside $BC(M,k)$. This is a commutative $C^*$-algebra. For $f\in C_g(M,k)^{\tau}$ we denote by $[f]$ its image in the quotient algebra $C_g(M,k)^{\tau}/C_0(M,k)^{\tau}$ and write $f\in [f]$.

\begin{definition}\label{D:invertible}
We say that $f$ is {\em invertible at infinity} if $[f]$ is invertible in $C_g(M,k)^{\tau}/C_0(M,k)^{\tau}$. Equivalently, this means that there exists a compact set $K\subset M$ and $C>0$  such that $f(x)$ is an invertible and $\|f(x)^{-1}\|<C$ for all $x\not\in K$.
\end{definition}

\subsection{The Toeplitz operator}\label{SS:Toeplitz}
We now assume that the generalized Dirac operator 
\[
    D\ := \ \dirac\otimes 1:\, \Gamma(M,E\otimes \CC^k)
    \ \to \ \Gamma(M,E\otimes \CC^k). 
\]
is odd symmetric and that $\dirac$ and, hence, $D$ satisfy the gap condition, cf. Assumption~\ref{A:gap}.

Let $\calH:= \ker D \subset L^2(M,E\otimes\CC^k)$ and let $P: L^2(M,E\otimes\CC^k)\to \calH$ denote the orthogonal projection. 

For $f\in C_g(M,k)^{\tau}$ we denote by  $M_f:\Gamma(M,E\otimes \CC^k)\to \Gamma(M,E\otimes \CC^k)$ the operator of multiplication by $f$. Then, $M_f$ is an odd symmetric operator. We write  $M_f^\pm$ for the restriction of $M_f$ to $E^\pm$.

\begin{definition}\label{D:Toeplitz}
Let $f\in C_g(M,k)^{\tau}$, the Toeplitz operator is the operator
\[
    T_f\ := \ P\,M_f\,P:\,\calH\ \to \ \calH.
\]
\end{definition}
This is a bounded odd symmetric operator. 

\subsection{The Fredholmness}\label{SS:Fredholmness}

By \cite[Lemma~2.6]{Bunke00}, if $D$ satisfies Assumption~\ref{A:gap} and  $f\in C^\infty_g(M,k)^\tau$  is invertible at infinity then  $T_f$ is Fredholm.  The goal of this section is to compute $\ind_\tau T_f$. 

\begin{lemma}
$\ind_\tau T_f$ depends only on the class $[f]\in C_g(M,k)^\tau/C_0(M,k)^\tau$.
\end{lemma}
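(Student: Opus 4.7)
The plan is to reduce the statement to the claim that $T_g$ is a compact operator on $\calH$ for every $g\in C_0(M,k)^\theta$. Setting $g:=f-f'$, the hypothesis $[f]=[f']$ gives $g\in C_0(M,k)^\theta$; the defining identity $g(\theta x)=g(x)^*$ then makes $M_g$ odd symmetric, and since $D$ is self-adjoint and odd symmetric one has $\tau D=D\tau$, hence $[\tau,P]=0$, so $T_g=PM_gP$ is a bounded odd symmetric operator on $\calH$. From $T_f=T_{f'}+T_g$ together with Corollary~\ref{C:compact perturbation}, compactness of $T_g$ immediately delivers $\ind_\tau T_f=\ind_\tau T_{f'}$.

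To prove this compactness I would first reduce to the case that $g$ is compactly supported. By definition $C_0(M,k)^\theta$ is the sup-norm closure of the compactly supported functions inside $BC(M,k)^\theta$, and by multiplying $g$ by a $\theta$-invariant compactly supported cutoff one can produce approximants $g_n\in C_0(M,k)^\theta$ with compact support converging to $g$ in $\|\cdot\|_\infty$. Since $\|T_{g_n}-T_g\|_{\mathrm{op}}\le\|g_n-g\|_\infty\to 0$ and the compact operators form a norm-closed subspace, it suffices to treat compactly supported $g$.

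For $g$ with compact support, pick $\phi\in C_c^\infty(M)$ with $\phi\equiv1$ on $\supp g$, so that $T_g=PM_gM_\phi P$; it is then enough to show that $M_\phi P:L^2(M,E\otimes\CC^k)\to L^2(M,E\otimes\CC^k)$ is compact. For a bounded sequence $(s_n)\subset L^2$, the sections $u_n:=Ps_n$ lie in $\ker D$ and hence are smooth by elliptic regularity. Interior elliptic estimates for the homogeneous equation $Du_n=0$ on a neighborhood of $\supp\phi$ bound $\|u_n\|_{W^{1,2}(\supp\phi)}$ in terms of $\|u_n\|_{L^2}$, so $(\phi u_n)$ is bounded in $W^{1,2}$ with support in a fixed compact set, and Rellich's lemma extracts an $L^2$-convergent subsequence.

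The only nontrivial input is the compactness of $M_\phi P$; everything else is formal and an appeal to the stability of the $\tau$-index from Corollary~\ref{C:compact perturbation}. The gap condition, Assumption~\ref{A:gap}, enters here implicitly, as it is what guarantees that $P$ is a bounded operator whose range consists of $D$-harmonic sections to which the elliptic-regularity plus Rellich argument applies.
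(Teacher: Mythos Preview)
Your proof is correct and follows essentially the same route as the paper: both arguments reduce to showing that $T_g$ is compact for $g\in C_0(M,k)^\theta$ and then invoke Corollary~\ref{C:compact perturbation}. The paper's version is much terser---it simply records that $\calH\subset W^{1,2}(M,E\otimes\CC^k)$ (with the graph norm) and appeals to Rellich through Corollary~\ref{C:compact perturbation}, leaving the approximation by compactly supported functions and the local elliptic estimate implicit---whereas you spell these steps out explicitly via interior regularity for $D$-harmonic sections.

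One small inaccuracy: your closing remark that the gap condition is what makes $P$ bounded with range consisting of $D$-harmonic sections is not quite right. The projection $P$ onto $\ker D$ is always a bounded orthogonal projection, and $\calH\subset W^{1,2}$ holds simply because $D$ is essentially self-adjoint. The gap condition (Assumption~\ref{A:gap}) is used elsewhere, namely for the Fredholmness of $T_f$, not for the present lemma.
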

\begin{proof}
Let $f_1\in [f]$. Then $f_1-f\in C_0(M,k)^\tau$. 

Let $W^{1,2}(M,E\otimes\CC^k)$ denote the domain of the closure of operator $D$, viewed as a Hilbert space. Then $D:W^{1,2}(M,E\otimes\CC^k)\to L^2(M,E\otimes\CC^k)$ is a bounded operator and $\calH\subset W^{1,2}(M,E\otimes\CC^k)$.
The lemma follows now from Corollary~\ref{C:compact perturbation}.
\end{proof}

\newcommand{\calC}{\mathcal{C}}
\subsection{The Callias-type operator}\label{SS:Callias for Boutet}
Let $f\in C_g(M,k)^\tau$ be invertible at infinity. Consider the ungraded odd symmetric operator 
\[
    \calC\ := \ D\ + \  i\,M_f:\, \Gamma(M,E)\ \to \Gamma(M,E).
\]

\begin{lemma}\label{Mf Callias}
The potential $M_f$ satisfies the ungraded admissibility condition of Definition~\ref{D:ungradedCallias}. Hence, $\calC$ is a  Callias-type operator. In particular, it is Fredholm.     
\end{lemma}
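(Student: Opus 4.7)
The plan is to verify the two conditions of ungraded admissibility from Definition~\ref{D:ungradedCallias} for $\Phi = M_f$ and $D_V^+ = D$, and then invoke Fredholmness of Callias-type operators. The whole argument rests on a single explicit commutator computation together with two ``competing'' estimates at infinity: one from below (invertibility of $f$) and one from above (decay of $df$).

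First, I would compute $[D, M_f]$ explicitly. Since $D = \dirac \otimes 1$ acts nontrivially only on the $E$-factor while $M_f$ acts only on the $\CC^k$-factor via the matrix $f(x)$, the pointwise commutator with the Clifford action vanishes, $[c(\xi), M_f] = 0$ for all $\xi \in TM$. Writing $\dirac = \sum_i c(e_i)\n^E_{e_i}$ and applying the Leibniz rule yields
\[
    [D, M_f] \ = \ \sum_i c(e_i)(e_i f) \ = \ c(df),
\]
which is a bundle endomorphism. This verifies condition (i) globally (i.e.\ with $K = \emptyset$), and it matches the equivalent formulation $[c(\xi), \Phi] = 0$ noted after Definition~\ref{D:ungradedCallias}.

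For condition (ii), I would combine two estimates. Since $f$ is invertible at infinity (Definition~\ref{D:invertible}), there is a compact set $K_0 \subset M$ and a constant $C > 0$ with $\|f(x)^{-1}\| < C$ for $x \notin K_0$, which forces the lower bound $M_f(x)^2 \ge C^{-2}$ (as self-adjoint endomorphisms) outside $K_0$. On the other hand, $f \in C_g^\infty(M, k)^\theta$ means $df$ vanishes at infinity, so $\|c(df)(x)\| \to 0$, and there exists a compact set $K \supset K_0$ with $\|c(df)(x)\| < C^{-2}/2$ for all $x \notin K$. With $c := C^{-2}/2$, we then have $M_f(x)^2 \ge \|[D, M_f](x)\| + c$ on $M \setminus K$, giving condition (ii).

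With both admissibility conditions verified, $\calC = D + iM_f$ fits Definition~\ref{D:ungradedCallias}, so it is a Callias-type operator and hence Fredholm by \cite[Proposition~1.4]{Anghel93Callias}. The only subtle point to check is that interpreting $M_f(x)^2$ as a positive operator requires pointwise self-adjointness of $f(x)$; this is the expected bookkeeping hidden in the symmetry conventions of $C_g(M,k)^{\theta}$ (together with $\theta$-equivariance $f(\theta x) = f(x)^*$, which ensures $M_f$ is odd symmetric with respect to $\tau$), and does not affect the estimates above.
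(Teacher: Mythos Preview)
Your proof is correct and follows essentially the same approach as the paper: compute $[D,M_f]=c(df)$ via the Leibniz rule for condition~(i), then combine the lower bound on $f$ from invertibility at infinity with the decay of $df$ for condition~(ii). You supply more explicit constants and correctly flag the self-adjointness bookkeeping for $M_f$, which the paper's proof leaves implicit.
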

\begin{proof}
It follows from the Leibniz rule \eqref{E:Leibniz} that the commutator
\[
    [D,M_f]\ = \ c(df). 
\]
Thus, condition (i) of Definition~\ref{D:ungradedCallias} holds.

Since $F$ is invertible at infinity and $df$ vanishes at infinity, there exists a constant $c$ and a compact set $K\subset M$ such that $f^2(x)>2c$ and $\|df\|<c$. This implies condition (ii) of Definition~\ref{D:ungradedCallias}.
\end{proof}

The main result of this section is the following analog of Proposition~2.6 of \cite{Bunke00}. 

\begin{theorem}\label{T:Toepitz}
$\displaystyle \ind_\tau \calC\ = \ \ind_\tau T_f$.
\end{theorem}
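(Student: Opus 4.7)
The plan is to deform $\calC = D + iM_f$ through a one-parameter family of ungraded Callias-type operators in which the coupling between $\calH = \ker D$ and $\calH^\perp$ becomes weak, then perform a Schur-complement reduction using the gap condition. Set
\[
    \calC_\lambda \ := \ \lambda D + iM_f, \qquad \lambda \in [1,\infty).
\]
Each $\calC_\lambda$ has domain equal to $W^{1,2}(M,E\otimes\CC^k)$, and the admissibility of $M_f$ as an ungraded potential for $\lambda D$ (in the sense of Definition~\ref{D:ungradedCallias}) follows from the two ingredients already used for $\calC_1 = \calC$: the commutator $[\lambda D,M_f] = \lambda\,c(df)$ is bounded because $df$ vanishes at infinity, while $|f|$ is bounded below at infinity, so $M_f(x)^2 \ge \lambda\|c(df)(x)\|+c$ holds outside a (sufficiently large, $\lambda$-dependent) compact set. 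Hence every $\calC_\lambda$ is a Fredholm odd-symmetric operator, $\tau\calC_\lambda\tau^{-1}=\lambda D - iM_f^* = \calC_\lambda^*$, and $\lambda\mapsto\calC_\lambda$ is norm-continuous as a map into bounded operators $W^{1,2}\to L^2$. Theorem~\ref{T:homotopyindex} then gives that $\ind_\tau\calC_\lambda$ is independent of $\lambda\ge 1$.

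By the gap condition there is $\delta>0$ with $\mathrm{spec}(D)\cap(-\delta,\delta)=\{0\}$; let $P$ be the orthogonal projection onto $\calH$ and $D':=D|_{\calH^\perp}$, so $\|(D')^{-1}\|\le\delta^{-1}$. Since $D$ is self-adjoint and odd-symmetric, $\calH$ is $\tau$-invariant, and by Lemma~\ref{L:tau orthogonal} the projection $P$ commutes with $\tau$. With respect to $L^2(M,E\otimes\CC^k)=\calH\oplus\calH^\perp$,
\[
    \calC_\lambda \ = \ \begin{pmatrix} iT_f & iA_{12} \\ iA_{21} & \lambda D' + iB \end{pmatrix},
\]
where $A_{12}:=PM_fP^\perp$, $A_{21}:=P^\perp M_fP$, $B:=P^\perp M_fP^\perp$. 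For $\lambda$ so large that $\lambda\delta>\|B\|$, the operator $\lambda D'+iB$ is invertible on $\calH^\perp$ with $\|(\lambda D'+iB)^{-1}\|\le(\lambda\delta-\|B\|)^{-1}$. Solving the bottom row of $\calC_\lambda u=0$ for $u_1=P^\perp u$ in terms of $u_0=Pu$ and substituting into the top row yields the linear bijection
\[
    \ker\calC_\lambda \ \xrightarrow{\ \sim\ } \ \ker T'_\lambda, \qquad u\mapsto Pu,
\]
with inverse $u_0\mapsto u_0 - i(\lambda D'+iB)^{-1}A_{21}u_0$, where
\[
    T'_\lambda \ := \ T_f\ -\ iA_{12}(\lambda D'+iB)^{-1}A_{21}\ :\ \calH\to\calH.
\]
In particular $\dim\ker\calC_\lambda=\dim\ker T'_\lambda$, so $\ind_\tau\calC_\lambda=\ind_\tau T'_\lambda$.

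To finish, I would show $\ind_\tau T'_\lambda = \ind_\tau T_f$ for large $\lambda$. A short computation using $\tau M_f\tau^{-1}=M_f^*$ and $P\tau=\tau P$ yields $\tau A_{12}\tau^{-1}=A_{21}^*$, $\tau A_{21}\tau^{-1}=A_{12}^*$, and $\tau(\lambda D'+iB)\tau^{-1}=(\lambda D'+iB)^*$, from which $\tau T'_\lambda\tau^{-1}=(T'_\lambda)^*$, so $T'_\lambda$ is odd-symmetric. The norm estimate
\[
    \|T'_\lambda - T_f\| \ \le\ \|A_{12}\|\cdot\|(\lambda D'+iB)^{-1}\|\cdot\|A_{21}\|\ \le\ \frac{\|f\|_\infty^2}{\lambda\delta-\|B\|}\ \longrightarrow\ 0
\]
shows that for $\lambda$ large the straight-line path $s\mapsto(1-s)T_f + sT'_\lambda$ is a continuous path in $\calB_\tau(\calH,\calH)$ of Fredholm operators, so Theorem~\ref{T:homotopyindex} gives $\ind_\tau T'_\lambda=\ind_\tau T_f$. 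Chaining,
\[
    \ind_\tau\calC \ = \ \ind_\tau\calC_\lambda \ = \ \ind_\tau T'_\lambda \ = \ \ind_\tau T_f.
\]
The main technical obstacle is the Schur-complement step: one must verify that the backward map lands in $W^{1,2}$ (which follows since $\ker D\subset W^{1,2}$ by elliptic regularity, and $(\lambda D'+iB)^{-1}$ maps $L^2(\calH^\perp)$ into $W^{1,2}(\calH^\perp)$) and that odd-symmetry genuinely survives the block reduction, as verified above.
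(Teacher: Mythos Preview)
Your argument is correct and takes a genuinely different route from the paper's proof. The paper writes $\calC$ in block form with respect to $L^2=\calH\oplus\calH^\perp$ and invokes Bunke's lemma that the off-diagonal blocks $PM_fQ$ and $QM_fP$ are \emph{compact} as operators $W^{1,2}\to L^2$; Corollary~\ref{C:compact perturbation} then lets one drop them, after which $\ind_\tau\calC=\ind_\tau T_f+\ind_\tau Q\calC Q$, and a homotopy $t\mapsto D+itM_f$ on $[0,1]$ drives $Q\calC Q$ to the invertible operator $D|_{\calH^\perp}$.

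You avoid the compactness input entirely: your Schur-complement reduction needs only the \emph{boundedness} of $A_{12},A_{21}$ and the spectral gap of $D'$, so the external Bunke lemma is never used. The cost is that you must verify by hand that $T'_\lambda$ is odd symmetric (which you do correctly) and that the backward map lands in $W^{1,2}$, whereas the paper's block-diagonalisation makes these points automatic. In exchange your proof is more self-contained, and the large-$\lambda$ scaling $\calC_\lambda=\lambda D+iM_f$ is a natural dual to the paper's small-$t$ scaling of the potential. Both routes ultimately rely on Theorem~\ref{T:homotopyindex} and the gap condition; the paper trades a cited compactness lemma for a shorter argument, while you trade a slightly longer computation for fewer prerequisites.
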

\begin{proof}
Recall that $P:L^2(M,E)\to \calH$ denotes the orthogonal projection. Set $Q:= 1-P$. The image of $Q$ is the orthogonal complement $\calH^\perp$ of $\calH$. 
With respect to decomposition $L^2(M,E\otimes\CC^k= \calH\oplus \calH^\perp$ we have
\[
    \calC\ = \ \begin{pmatrix}
        P\calC P& P\calC Q\\ Q\calC P& Q\calC Q
    \end{pmatrix}
\]

Recall that we denote by $W^{1,2}(M,E\otimes\CC^k)$ the domain of the closure of $D$.
By Lemma~2.4 of \cite{Bunke00} the operators $PM_fQ$ and $QM_fP$ are compact as operators from $W^{1,2}(M,E\otimes\CC^k)$ to $L^2(M,E\otimes\CC^k)$. All these operators are odd symmetric since $P$ is odd symmetric. Hence, by Corollary~\ref{C:compact perturbation}, 
\[
    \ind_\tau \calC\ = \ \ind_\tau\begin{pmatrix}
        P\calC P& 0\\ 0& Q\calC Q 
    \end{pmatrix}\ = \ \ind_\tau P\calC P\ + \ \ind_\tau Q\calC Q.
\]
Consider the family $\calC_t:= D+itM_f$ ($t\in [0,1]$. For all $t>0$, the operator $\calC_t$ is of Callias type and, hence, Fredholm. So $Q\calC_tQ$ is also Fredholm. The operator $Q\calC_0Q=D\big|_{(\ker D)^\perp}$ is invertible. Thus, we have a continuous family $Q\calC_t Q$ of operators connecting $Q\calC Q$ with an invertible operator. It follows that $\ind_\tau Q\calC Q=0$.  

Finally, $P\calC P= T_f$. Hence, 
\[
    \ind_\tau \calC\ = \ \ind_\tau P\calC P \ = \ \ind_\tau T_f.
\]
\end{proof}

\nocite{BrShi21odd}
\begin{bibdiv}
\begin{biblist}

\bib{Anghel90}{article}{
      author={Anghel, Nicolae},
       title={{$L^2$}-index formulae for perturbed {D}irac operators},
        date={1990},
        ISSN={0010-3616},
     journal={Comm. Math. Phys.},
      volume={128},
      number={1},
       pages={77\ndash 97},
         url={http://projecteuclid.org/euclid.cmp/1104180304},
}

\bib{Anghel93}{article}{
      author={Anghel, Nicolae},
       title={An abstract index theorem on noncompact {R}iemannian manifolds},
        date={1993},
        ISSN={0362-1588},
     journal={Houston J. Math.},
      volume={19},
      number={2},
       pages={223\ndash 237},
}

\bib{Anghel93Callias}{article}{
      author={Anghel, Nicolae},
       title={On the index of {C}allias-type operators},
        date={1993},
        ISSN={1016-443X},
     journal={Geom. Funct. Anal.},
      volume={3},
      number={5},
       pages={431\ndash 438},
         url={http://dx.doi.org/10.1007/BF01896237},
}

\bib{AtSinger69}{article}{
      author={Atiyah, M.~F.},
      author={Singer, I.~M.},
       title={Index theory for skew-adjoint {F}redholm operators},
        date={1969},
        ISSN={0073-8301},
     journal={Inst. Hautes \'Etudes Sci. Publ. Math.},
      number={37},
       pages={5\ndash 26},
         url={http://www.numdam.org/item?id=PMIHES_1969__37__5_0},
      review={\MR{0285033}},
}

\bib{AtSinger5}{article}{
      author={Atiyah, M.~F.},
      author={Singer, I.~M.},
       title={The index of elliptic operators: V},
        date={1971},
        ISSN={0003486X},
     journal={Annals of Mathematics},
      volume={93},
      number={1},
       pages={139\ndash 149},
         url={http://www.jstor.org/stable/1970757},
}

\bib{BeGeVe}{book}{
      author={Berline, N.},
      author={Getzler, E.},
      author={Vergne, M.},
       title={Heat kernels and {Dirac} operators},
   publisher={Springer-Verlag},
        date={1992},
}

\bib{BoosWoj93book}{book}{
      author={Boo{\ss}-Bavnbek, B.},
      author={Wojciechowski, K.~P.},
       title={Elliptic boundary problems for {D}irac operators},
      series={Mathematics: Theory \& Applications},
   publisher={Birkh\"auser Boston, Inc., Boston, MA},
        date={1993},
        ISBN={0-8176-3681-1},
         url={http://dx.doi.org/10.1007/978-1-4612-0337-7},
}

\bib{BoutetdeMonvel78}{article}{
      author={Boutet~de Monvel, L.},
       title={On the index of {T}oeplitz operators of several complex
  variables},
        date={1978/79},
        ISSN={0020-9910},
     journal={Invent. Math.},
      volume={50},
      number={3},
       pages={249\ndash 272},
         url={https://doi.org/10.1007/BF01410080},
      review={\MR{520928}},
}

\bib{Br-cob}{article}{
      author={Braverman, Maxim},
       title={New proof of the cobordism invariance of the index},
        date={2002},
     journal={Proc. Amer. Math. Soc.},
      volume={130},
      number={4},
       pages={1095\ndash 1101},
}
\bib{BrBackground}{article}{
author={Braverman, Maxim},
title={Background cohomology of a non-compact {K\"ahler}${G}$-manifold},
date={2015},
journal={Trans. Amer. Math. Soc.},
volume={367},
pages={2235-- 2262},
}

\bib{Br19Toeplitz}{article}{
      author={Braverman, Maxim},
       title={Spectral flows of {T}oeplitz operators and bulk-edge
  correspondence},
        date={2019},
        ISSN={1573-0530},
     journal={Letters in Mathematical Physics},
      volume={109},
      number={10},
       pages={2271\ndash 2289},
         url={https://doi.org/10.1007/s11005-019-01187-7},
}

\bib{BrCecchini17}{article}{
      author={Braverman, Maxim},
      author={Cecchini, Simone},
       title={Callias-type operators in von {N}eumann algebras},
        date={2018},
        ISSN={1559-002X},
     journal={The Journal of Geometric Analysis},
      volume={28},
      number={1},
       pages={546\ndash 586},
         url={https://doi.org/10.1007/s12220-017-9832-1},
}

\bib{BrSaeedi24a}{article}{
      author={Braverman, Maxim},
      author={Sadegh, Ahmad Reza Haj~Saeedi},
       title={The $\mathbb{Z}_2$-valued spectral flow of a symmetric family of Toeplitz operators},
        date={2024},
     journal={arXiv:2409.15534},
}

\bib{BrShi16}{article}{
      author={Braverman, Maxim},
      author={Shi, Pengshuai},
       title={Cobordism invariance of the index of {C}allias-type operators},
        date={2016},
        ISSN={0360-5302},
     journal={Comm. Partial Differential Equations},
      volume={41},
      number={8},
       pages={1183\ndash 1203},
         url={http://dx.doi.org/10.1080/03605302.2016.1183214},
}

\bib{BrShi21odd}{article}{
      author={Braverman, Maxim},
      author={Shi, Pengshuai},
       title={The {A}tiyah-{P}atodi-{S}inger index on manifolds with
  non-compact boundary},
        date={2021},
        ISSN={1050-6926,1559-002X},
     journal={J. Geom. Anal.},
      volume={31},
      number={4},
       pages={3713\ndash 3763},
         url={https://doi.org/10.1007/s12220-020-00412-3},
      review={\MR{4236541}},
}

\bib{BruningMoscovici}{article}{
      author={Br{\"u}ning, J.},
      author={Moscovici, H.},
       title={{$L^2$}-index for certain {D}irac-{S}chr\"odinger operators},
        date={1992},
        ISSN={0012-7094},
     journal={Duke Math. J.},
      volume={66},
      number={2},
       pages={311\ndash 336},
         url={http://dx.doi.org/10.1215/S0012-7094-92-06609-9},
      review={\MR{1162192 (93g:58142)}},
}

\bib{Bunke95}{article}{
      author={Bunke, Ulrich},
       title={A {$K$}-theoretic relative index theorem and {C}allias-type
  {D}irac operators},
        date={1995},
        ISSN={0025-5831},
     journal={Math. Ann.},
      volume={303},
      number={2},
       pages={241\ndash 279},
         url={http://dx.doi.org/10.1007/BF01460989},
      review={\MR{1348799 (96e:58148)}},
}

\bib{Bunke00}{incollection}{
      author={Bunke, Ulrich},
       title={On the index of equivariant {T}oeplitz operators},
        date={2000},
   booktitle={Lie theory and its applications in physics, {III} ({C}lausthal,
  1999)},
   publisher={World Sci. Publ., River Edge, NJ},
       pages={176\ndash 184},
      review={\MR{1888382}},
}

\bib{DeNittisGomi15}{article}{
      author={De~Nittis, Giuseppe},
      author={Gomi, Kiyonori},
       title={Classification of ``quaternionic" {B}loch-bundles: topological
  quantum systems of type {A}{I}{I}},
        date={2015},
        ISSN={0010-3616,1432-0916},
     journal={Comm. Math. Phys.},
      volume={339},
      number={1},
       pages={1\ndash 55},
         url={https://doi.org/10.1007/s00220-015-2390-0},
      review={\MR{3366050}},
}

\bib{DeNittisSB15}{article}{
      author={De~Nittis, Giuseppe},
      author={Schulz-Baldes, Hermann},
       title={Spectral flows of dilations of {F}redholm operators},
        date={2015},
        ISSN={0008-4395,1496-4287},
     journal={Canad. Math. Bull.},
      volume={58},
      number={1},
       pages={51\ndash 68},
         url={https://doi.org/10.4153/CMB-2014-055-3},
      review={\MR{3303207}},
}

\bib{DollSB21}{article}{
      author={Doll, Nora},
      author={Schulz-Baldes, Hermann},
       title={Skew localizer and {$\mathbb{Z}_2$}-flows for real index
  pairings},
        date={2021},
        ISSN={0001-8708,1090-2082},
     journal={Adv. Math.},
      volume={392},
       pages={Paper No. 108038, 42},
         url={https://doi.org/10.1016/j.aim.2021.108038},
      review={\MR{4322160}},
}

\bib{DonnellyFefferman83}{article}{
      author={Donnelly, H.},
      author={Fefferman, C.},
       title={{$L^{2}$}-cohomology and index theorem for the {B}ergman metric},
        date={1983},
        ISSN={0003-486X},
     journal={Ann. of Math. (2)},
      volume={118},
      number={3},
       pages={593\ndash 618},
         url={https://doi.org/10.2307/2006983},
}

\bib{Dupont69}{article}{
      author={Dupont, Johan~L.},
       title={Symplectic bundles and {$KR$}-theory},
        date={1969},
        ISSN={0025-5521,1903-1807},
     journal={Math. Scand.},
      volume={24},
       pages={27\ndash 30},
         url={https://doi.org/10.7146/math.scand.a-10918},
      review={\MR{254839}},
}

\bib{GrafPorta13}{article}{
      author={Graf, G.~M.},
      author={Porta, M.},
       title={Bulk-edge correspondence for two-dimensional topological
  insulators},
        date={2013},
        ISSN={0010-3616},
     journal={Comm. Math. Phys.},
      volume={324},
      number={3},
       pages={851\ndash 895},
         url={http://dx.doi.org/10.1007/s00220-013-1819-6},
      review={\MR{3123539}},
}

\bib{GromovLawson83}{article}{
      author={Gromov, M.},
      author={Lawson, H.~B., Jr.},
       title={Positive scalar curvature and the {D}irac operator on complete
  {R}iemannian manifolds},
        date={1983},
        ISSN={0073-8301},
     journal={Inst. Hautes \'Etudes Sci. Publ. Math.},
      number={58},
       pages={83\ndash 196 (1984)},
         url={http://www.numdam.org/item?id=PMIHES_1983__58__83_0},
}

\bib{GuentnerHigson96}{article}{
      author={Guentner, Erik},
      author={Higson, Nigel},
       title={A note on {T}oeplitz operators},
        date={1996},
        ISSN={0129-167X},
     journal={Internat. J. Math.},
      volume={7},
      number={4},
       pages={501\ndash 513},
         url={http://dx.doi.org/10.1142/S0129167X9600027X},
      review={\MR{1408836}},
}

\bib{Hayashi17}{article}{
      author={Hayashi, Shin},
       title={Bulk-edge correspondence and the cobordism invariance of the
  index},
        date={2017},
     journal={Reviews in Mathematical Physics},
      volume={29},
      number={10},
       pages={1750033},
      eprint={https://doi.org/10.1142/S0129055X17500337},
         url={https://doi.org/10.1142/S0129055X17500337},
}

\bib{KleinMartin1952}{article}{
      author={Klein, Martin~J.},
       title={On a degeneracy theorem of {K}ramers},
    language={eng},
        date={1952},
        ISSN={0002-9505},
     journal={American journal of physics},
      volume={20},
      number={2},
       pages={65\ndash 71},
}

\bib{LawMic89}{book}{
      author={Lawson, H.~B.},
      author={Michelsohn, M.-L.},
       title={Spin geometry},
   publisher={Princeton University Press},
     address={Princeton, New Jersey},
        date={1989},
}

\bib{Schulz-Baldes15}{article}{
      author={Schulz-Baldes, Hermann},
       title={{$\mathbb Z_2$}-indices and factorization properties of odd
  symmetric {F}redholm operators},
        date={2015},
        ISSN={1431-0635,1431-0643},
     journal={Doc. Math.},
      volume={20},
       pages={1481\ndash 1500},
         url={https://doi.org/10.3934/dcdsb.2015.20.1031},
}

\bib{Stein72book}{book}{
      author={Stein, E.~M.},
       title={Boundary behavior of holomorphic functions of several complex
  variables},
   publisher={Princeton University Press, Princeton, N.J.; University of Tokyo
  Press, Tokyo},
        date={1972},
        note={Mathematical Notes, No. 11},
      review={\MR{0473215}},
}

\end{biblist}
\end{bibdiv}

\end{document}